\theoremstyle{plain}
\newtheorem{lem}{Lemma}[section]
\newtheorem{cor}[lem]{Corollary}
\newtheorem{prop}[lem]{Proposition}
\newtheorem{thm}[lem]{Theorem}
\theoremstyle{definition}
\newtheorem{rem}[lem]{Remark}
\newtheorem{dfn}[lem]{Definition}
\renewcommand{\phi}{\varphi}
\renewcommand{\leq}{\leqslant}
\renewcommand{\geq}{\geqslant}
\renewcommand{\epsilon}{\varepsilon}
\renewcommand{\kappa}{\varkappa}
\DeclareMathOperator{\spec}{Spec}
 \DeclareMathOperator{\cyl}{cyl}
\DeclareMathOperator{\diag}{diag}
 \DeclareMathOperator{\mot}{mot}
\DeclareMathOperator{\Hom}{Hom} 
 \DeclareMathOperator{\id}{id}
\DeclareMathOperator{\Fr}{Fr} 
 \DeclareMathOperator{\Alg}{Alg}
\DeclareMathOperator{\colim}{colim}
\DeclareMathOperator{\corr}{Corr} 
 \DeclareMathOperator{\kr}{Ker}
\DeclareMathOperator{\Ho}{Ho} 
\DeclareMathOperator{\nis}{\mathsf{nis}}
 \DeclareMathOperator{\Mod}{Mod}
 \DeclareMathOperator{\Ob}{Ob}
\newcommand{\bl}[1]{\buildrel #1\over}
\newcommand{\cc}{\mathcal}
\newcommand{\bb}{\mathbb}
\newcommand{\op}{{\textrm{\rm op}}}
\newcommand{\wt}{\widetilde}
\newcommand{\shnis}{SH^{\nis}_{S^{1}}}
\newcommand{\gmp}{\bb G}
\newcommand{\uhom}{\underline{\Hom}}
\newcommand{\M}{\mathcal{M}}
\newcommand{\eff}{\mathsf{eff}}
\newcommand{\pt}{\mathsf{pt}}
\begin{document}

\footskip30pt


\title{Correspondences and stable homotopy theory}
\author{Grigory Garkusha}
\address{Department of Mathematics, Swansea University, Fabian Way, Swansea SA1 8EN, UK}
\email{g.garkusha@swansea.ac.uk}

\begin{abstract}
A general method of producing correspondences and spectral categories
out of symmetric ring objects in general categories is given. As an application,
stable homotopy theory of spectra $SH$ is recovered from modules over a commutative symmetric ring
spectrum defined in terms of framed correspondences over an algebraically closed field. Another application 
recovers stable motivic homotopy theory $SH(k)$ from spectral modules over associated spectral categories.
\end{abstract}

\thanks{Supported by EPSRC grant EP/W012030/1}

\keywords{Stable homotopy theory, spectral categories, enriched category theory}
\subjclass[2010]{55P42, 18D20}

\maketitle
\thispagestyle{empty}
\pagestyle{plain}

\tableofcontents

\section{Introduction}\label{introduction}

Algebraic Kasparov $K$-theory is stable homotopy theory of non-unital $k$-algebras $\Alg_k$~\cite{GG1,GG2}.
In detail, we start with the category $U_\bullet(\Alg_k)$ of pointed simplicial functors from $\Alg_k$ to pointed simplicial sets,
where each algebra $A\in\Alg_k$ is regarded as the representable object $rA=\Hom_{\Alg_k}(A,-)$. The category $U_\bullet(\Alg_k)$ 
comes equipped with a motivic model structure. Let $S^1$ be the standard simplicial circle. Stabilization of $U_\bullet(\Alg_k)$ in the $S^1$-direction
leads to a stable motivic model category $Sp_{S^1}(\Alg_k)$ of $S^1$-spectra in $U_\bullet(\Alg_k)$.
The $S^1$-suspension spectrum $\Sigma^\infty_{S^1} rA$ of an algebra $A$ is computed as the fibrant
spectrum $\bb K(A,-)$~\cite{GG2}, where $\bb K(A,B)$ is the algebraic Kasparov $KK$-theory spectrum of $A,B\in\Alg_k$
defined in~\cite{GG1}.
A key non-unital homomorphism involved in the computation is $\sigma_A:JA\to\Omega A$, where $JA=\kr(TA\to A)$ with
$TA=A\oplus A^{\otimes 2}\oplus\cdots$ the algebraic tensor algebra and 
$\Omega A=(x^2-x)A[x]$. The morphism $r(\sigma_A)$ is a motivic equivalence in $U_\bullet(\Alg_k)$. 

In stable motivic homotopy theory the suspension
$\bb P^1$-spectrum $\Sigma^\infty_{\bb P^1} X_+$ of a smooth
algebraic variety $X\in\mathrm{Sm}/k$ is computed in~\cite[Theorem~4.1]{GP1} as 
a (positively) fibrant spectrum $M_{\bb P^{\wedge 1}}(X)_f$. Locally in the Nisnevich topology it is equal to
the $\bb P^1$-spectrum $(\Fr(\Delta^\bullet_k\times-,X),\Fr(\Delta^\bullet_k\times-,X\times T),\ldots)$,
where $T=\bb A^1/(\bb A^1-\{0\})$ and $\Fr(-,X)$ is the sheaf of stable framed correspondences
introduced by Voevodsky~\cite{Voe2} in 2001.
A key morphism involved in the computation is the canonical motivic equivalence 
$\sigma_X:X_+\wedge\bb P^{\wedge 1}\to X_+\wedge T$ in the category of pointed motivic spaces $\cc M$. Here $\bb P^{\wedge 1}$
is the pointed projective line $(\bb P^{1},\infty)$. 

The computations of $\Sigma_{S^1}^\infty rA$ and $\Sigma^\infty_{\bb P^1} X_+$ share lots of common properties~\cite{GG17}.
Inspired by these computations, two categorical constructions are introduced in this paper.
The first one produces correspondences associated with 
two objects $P,T\in\cc C$ and
ring objects of the category of symmetric sequences $\cc C^\Sigma$, where $\cc C$ is
a symmetric monoidal category with finite colimits and zero object 0. The correspondences are constructed 
between objects
of an arbitrary full subcategory $\cc B$ of $\cc C$ closed under monoidal product. See Theorem~\ref{capitals} for details.
After Voevodsky, correspondences play a prominent role in motivic homotopy theory. In particular, they are necessary
for computing motivic homotopy types as well as for producing triangulated categories of motives.
For example, Voevodsky's fundamental graded category of framed correspondences $\Fr_*(k)$ is recovered from 
Theorem~\ref{capitals} if we take $\cc C=\cc M$, $\cc B=\{X_+\mid X\in\mathrm{Sm}/k\}$, $P=(\bb P^1,\infty)$, $T=\bb A^1/(\bb A^1-\{0\})$, and
the commutative ring object $(S^0,T,T^{2},\ldots)$ in $\cc M^\Sigma$. 
Next, if $\sigma:P\to T$ is a morphism in $\cc C$ then the second categorical 
construction produces spectral categories, i.e. categories
enriched over symmetric $S^1$-spectra $Sp_{S^1}^\Sigma$, which are used for applications mentioned below. 
See Theorem~\ref{tampa} for details.
Spectral categories are of great utility in classical and equivariant stable homotopy theory (see, e.g.,~\cite{GM,SS1})
as well as in constructing triangulated categories of $K$-motives~\cite{GP12,GP14}.

The spectral categories and symmetric spectra constructed in this paper lead to the following applications.
We first introduce the stable homotopy category $SH_k$ over an arbitrary field $k$ in Section~\ref{galois}. It is defined
as the homotopy category of $\mathsf{S}_k$-modules,
where $\mathsf{S}_k$ is a commutative symmetric ring spectrum defined over $k$.
Then one reconstructs in Theorem~\ref{recover} the
stable homotopy theory of $S^1$-spectra $SH$ as $SH_k$ if $k$ is algebraically closed 
(we need to invert the exponential characteristic). Moreover,
this reconstruction is given by a functor taking a symmetric $S^1$-spectrum $N$ to its 
symmetric framed motive ${M}^\Sigma_{fr}(N)$ introduced in this paper (see Definition~\ref{sigmafrmot}).
Another application gives yet another genuinely local model of stable motivic homotopy theory
$SH(k)$ (in addition to~\cite{GP5}) and, more generally, a local model for the category of $E$-modules
in $SH(k)$, where $E$ is a symmetric Thom ring spectrum. 
See Theorem~\ref{spectralmore} and Corollary~\ref{spectralmorecor1} for details.
For the latter result, we apply Theorem~\ref{tampa} to produce a spectral category $\cc O^E_{\Delta}$
using data as above: $\cc C=\cc M$, $\cc B=\{X_+\mid X\in\mathrm{Sm}/k\}$, $P=(\bb P^1,\infty)$, $T=\bb A^1/(\bb A^1-\{0\})$.
We also use the enriched motivic homotopy theory 
of motivic spectral categories developed in~\cite{GP12,GP14}.
The reader will also find reconstruction theorems for $E$-modules of $SH(k)$
in terms of $\infty$-categories of ``tangentially framed corrrespondences" in~\cite{EHKSY,EHKSY1}.
The approach presented in Section~\ref{enriched}
is combinatorial in the sense that it is based on explicit spectral categories produced by Theorem~\ref{tampa} and
modules over them defined in terms of original Voevodsky's framed correspondences~\cite{Voe2}.
This approach also produces triangulated categories of $E$-framed motives out of spectral 
categories of Theorem~\ref{tampa}. They are constructed in a similar fashion as the classical 
Voevodsky category of motives or the category of $K$-motives in the sense of~\cite{GP12,GP14}.

The author also expects further applications of (spectral) categories of correspondences, constructed in
this paper for quite general categories, in classical and algebraic Kasparov $K$-theory as well as in 
non-commutative algebraic geometry.
This will be the material of subsequent papers.
In this paper he has concentrated on applications in classical and motivic stable homotopy theory.

The author thanks the anonymous referee for helpful comments.


\subsubsection*{Notation.}
Throughout the paper we employ the following notation.\label{ntn}
\vspace{0.08in}

\begin{tabular}{l|l}
$k$ and $\pt$ & a field of exponential characteristic $e$ and $\mathrm{Spec}(k)$\\
$\mathrm{Sm}/k$ & the category of smooth separated schemes of finite type \\
$\Fr_0(k)$ or $\mathrm{Sm}/k_{+}$ & the category of framed correspondences of level zero\\
$\mathrm{Shv}_{\bullet}(\mathrm{Sm}/k)$ & the closed symmetric monoidal category of pointed Nisnevich sheaves \\
$\M=\Delta^{\op}\mathrm{Shv}_{\bullet}(\mathrm{Sm}/k)$ & the category of pointed motivic spaces,\\
{}& a.k.a. the category of~pointed simplicial  Nisnevich sheaves \\
${\bf S}_{\bullet}$ & the category of pointed simplicial sets
\end{tabular}
\vspace{0.05in}
\noindent 


\section{Graded symmetric sequences}\label{graded}

Let $(\cc C,\wedge,\vee,S)$ be a symmetric monoidal category with finite coproducts, unit object $S$ and zero object 0.
We assume that a canonical morphism
   $$v:\bigvee_{i\in I}(A_i\wedge B)\to(\bigvee_{i\in I}A_i)\wedge B$$
is an isomorphism for any finite set $I$ and $A_i,C\in\cc C$\footnote{Such a category $\cc C$ is also known as
a distributive symmetric monoidal category.}. In particular, if $I=\emptyset$ then $0\wedge B\cong B\wedge 0\cong 0$.

In what follows we shall also assume that $\cc C$ has finite colimits.
By~\cite[Section~7]{H} the category of symmetric sequences
$\cc C^\Sigma$ is symmetric monoidal with
   $$(X\wedge Y)_n=\bigvee_{p+q=n}\Sigma_n\times_{\Sigma_p\times\Sigma_q}X_p\wedge Y_q.$$
The symmetric sequence $(S,0,0,\ldots)$ is a monoidal unit of $\cc C^\Sigma$.
This notation needs some explanation (we follow~\cite[Section~7]{H}).
Given a finite set $\Gamma$ and an object $A\in\cc C$, $\Gamma\times A$ is
the coproduct of $|\Gamma|$ copies of $A$. If $\Gamma$ is a group, then $\Gamma\times A$ has an obvious left
$\Gamma$-action; $\Gamma\times A$ is the free $\Gamma$-object on $A$.
Note that a $\Gamma$-action on $A$ is then equivalent to a map $\Gamma\times A\to A$
satisfying the usual unit and associativity conditions. Also, if $\Gamma$ admits a right action by a
group $\Gamma'$, and $A$ is a left $\Gamma'$-object, then we can form $\Gamma\times_{\Gamma'} A$
as the colimit of the $\Gamma'$-action on $\Gamma\times A$,
where $\alpha\in\Gamma'$ takes the copy of $A$ corresponding to
$\beta\in\Gamma$ to the copy of $A$ corresponding to $\beta\alpha^{-1}$ by the action of $\alpha$.

Given two maps $f:X\to X'$ and $g:Y\to Y'$ in $\cc C^\Sigma$, the $\Sigma_p\times\Sigma_q$-equivariant maps
$f_p\wedge g_q:X_p\wedge Y_q\to X_p'\wedge Y_q'$ yield the 
{\it tensor product morphism\/} $f\wedge g:X\wedge Y\to X'\wedge Y'$ in $\cc C^\Sigma$.

The twist isomorphism $twist:X\wedge Y\to Y\wedge X$ for $X,Y\in\cc C^\Sigma$ is the
natural map taking the summand $(\alpha,X_p\wedge Y_q)$ to the summand $(\alpha\chi_{q,p},Y_q\wedge X_p)$
for $\alpha\in\Sigma_{p+q}$, where $\chi_{q,p}\in\Sigma_{p+q}$ is the $(q,p)$-shuffle
given by $\chi_{q,p}(i) = i+p$ for $1\leq i \leq q$ and $\chi_{q,p}(i) = i-q$ for $q < i \leq p + q$.
It is worth noting that the map defined without the shuffle permutation is not a map of symmetric sequences.

\begin{dfn}[Ring objects]
In what follows we shall refer to monoid objects in $\cc C^\Sigma$ as
{\it ring objects}. There is a standard description of a ring object  $E$ in $\cc C^\Sigma$ which we need later:

\begin{itemize}
\item[$\diamond$] a sequence of objects $E_n\in\cc C$ for $n\geq 0$;
\item[$\diamond$] a left action of the symmetric group $\Sigma_n$ on $E_n$ for each $n\geq 0$;
\item[$\diamond$] $\Sigma_n\times\Sigma_m$-equivariant {\it multiplication maps}
   $$\mu_{n,m}:E_n\wedge E_m\to E_{n+m}$$
for $n, m\geq 0$, and
\item[$\diamond$] a {\it unit map} $\iota_0:S\to E_0$.
\end{itemize}
This data is subject to the following conditions:

(Associativity) The square
   $$\xymatrix{E_n\wedge E_m\wedge E_p\ar[d]_{\mu_{n,m}\wedge\id}\ar[r]^{\id\wedge\mu_{m,p}}&E_n\wedge E_{m+p}\ar[d]^{\mu_{n,m+p}}\\
                       E_{n+m}\wedge E_{p}\ar[r]_{\mu_{n+m,p}}&E_{n+m+p}}$$
commutes for all $n, m, p\geq 0$.

(Unit) The two composites
   $$E_n\cong E_n\wedge S\xrightarrow{E_n\wedge\iota_0}E_n\wedge E_0\xrightarrow{\mu_{n,0}}E_n$$
   $$E_n\cong S\wedge E_n\xrightarrow{\iota_0\wedge E_n}E_0\wedge E_n\xrightarrow{\mu_{0,n}}E_n$$
are the identity for all $n\geq 0$.

A morphism $f:E\to E'$ of ring objects consists of $\Sigma_n$-equivariant maps
$f_n:E_n\to E'_n$ for $n\geq 0$, which are compatible with the multiplication and unit maps in the sense that
$f_{n+m}\circ\mu_{n,m}=\mu_{n,m}\circ(f_n\wedge f_m)$ for all $n,m\geq 0$, and $f_0\circ\iota_0=\iota_0$.

A ring object $E$ is commutative if the square
   $$\xymatrix{E_n\wedge E_m\ar[d]_{\mu_{n,m}}\ar[r]^{twist}&E_m\wedge E_{n}\ar[d]^{\mu_{m,n}}\\
                       E_{n+m}\ar[r]_{\chi_{n,m}}&E_{m+n}}$$
commutes for all $n,m\geq 0$.
\end{dfn}

\begin{dfn}[Modules]\label{emodule}
A {\it right module $M$\/} over a ring object $E\in\cc C^\Sigma$ is defined in a standard way. There is an equivalent
definition which we need later:
\begin{itemize}
\item[$\diamond$] a sequence of objects $M_n\in\cc C$ for $n\geq 0$
\item[$\diamond$] a left action of the symmetric group $\Sigma_n$ on $M_n$ for each $n\geq 0$, and
\item[$\diamond$] $\Sigma_n\times\Sigma_m$-equivariant action maps $\alpha_{n,m}:M_n\wedge E_m\to M_{n+m}$ for $n,m\geq 0$.
\end{itemize}
The action maps have to be associative and unital in the sense that the following diagrams commute
   $$\xymatrix{M_n\wedge E_m\wedge E_p\ar[r]^{M_n\wedge\mu_{m,p}}\ar[d]_{\alpha_{n,m}\wedge E_p}&M_n\wedge E_{m+p}\ar[d]^{\alpha_{n,m+p}}
                       &&&M_n\cong M_n\wedge S\ar[dr]_{\id_{M_n}}\ar[r]^(.57){M_n\wedge\iota_0}&M_n\wedge E_0\ar[d]^{\alpha_{n,0}}\\
                       M_{n+m}\wedge E_p\ar[r]_{\alpha_{n+m,p}}&M_{n+m+p}
                       &&&&M_n}$$
for all $n,m,p\geq 0$. A {\it morphism\/} $f:M\to N$ of right $E$-modules consists of
$\Sigma_n$-equivariant maps $f_n:M_n\to N_n$ for $n\geq 0$, which are compatible with the action
maps in the sense that $f_{n+m}\circ\alpha_{n,m}=\alpha_{n,m}\circ(f_n\wedge E_m$) for all
$n,m\geq 0$. We denote the category of right $E$-modules by $\Mod E$.
\end{dfn}

The following definition is motivated by the fundamental graded category of Voevodsky's framed correspondences,
in which the role of $\cc C$ is played by the category of pointed motivic spaces $\cc M$, $\cc B$ is given by pointed motivic spaces of the form
$X_+$ with $X\in\textrm{Sm}/k$, $P$ is the pointed projective line
$(\bb P^1,\infty)$, and $E=(\pt_+,T,T^2,\ldots)$ with $T=\bb A^1/(\bb A^1-\{0\})$.

In what follows we shall tacitly use iterated monoidal products and coherence.

\begin{dfn}
Suppose $\cc B$ is a full subcategory of $\cc C$ closed under
$\wedge$ and $P\in\Ob\cc C$. Let $E$ be a ring object of $\cc
C^\Sigma$. We define the set of {\it $(E,P)$-correspondences of
level $n$\/} between two objects $X,Y\in\cc B$ by
   $$\corr^E_n(X,Y):=\Hom_{\cc C}(X\wedge P^{\wedge n},Y\wedge E_n).$$
This set is pointed at the zeroth map. By definition, $\corr^E_0(X,Y):=\Hom_{\cc C}(X,Y\wedge E_0)$.
\end{dfn}

Define a pairing
   $$\phi_{X,Y,Z}:\corr^E_n(X,Y)\wedge\corr^E_m(Y,Z)\to\corr^E_{n+m}(X,Z)$$
by the rule: $\phi_{X,Y,Z}(f:X\wedge P^{\wedge n}\to Y\wedge E_n,g:Y\wedge P^{\wedge m}\to Z\wedge E_m)$
is given by the composition
\begin{gather*}
X\wedge P^{\wedge n}\wedge P^{\wedge m}\xrightarrow{f\wedge P^{\wedge m}}
Y\wedge E_n\wedge P^{\wedge m}
\xrightarrow{tw} Y\wedge P^{\wedge m}\wedge E_n\xrightarrow{g\wedge E_n} Z\wedge E_{m}\wedge E_n\xrightarrow{tw}\\
\to Z\wedge E_n\wedge E_m\xrightarrow{Z\wedge \mu_{n,m}}Z\wedge E_{n+m}.
\end{gather*}

\begin{thm}\label{capitals}
Let $E$ be a ring object in $\cc C^\Sigma$ and $\cc B$ is a full subcategory of $\cc C$ closed under monoidal product.
Then $\cc B$ can be enriched over the closed symmetric monoidal category of symmetric sequences of pointed sets $Sets_*^\Sigma$.
Namely, $Sets_*^\Sigma$-objects of morphisms are defined by
   \begin{equation}\label{skobki}
    (\corr^E_0(X,Y),\corr^E_1(X,Y),\corr^E_2(X,Y),\ldots),\quad X,Y\in\cc B.
   \end{equation}
Compositions are defined by
pairings $\phi_{X,Y,Z}$. The resulting $Sets_*^\Sigma$-category is denoted by $\corr^E_*(\cc B)$.
Moreover, if $E$ is a commutative ring object then $\corr^E_*(\cc B)$ is symmetric monoidal. The
monoidal product of $X,Y\in\Ob(\corr^E_*(\cc B))$ is defined as $X\wedge Y$, where $\wedge$ is
the monoidal product in $\cc B$.
\end{thm}

\begin{proof}
The left action of the symmetric group $\Sigma_n$ on $\corr^E_n(X,Y)=\Hom_{\cc
C}(X\wedge P^{\wedge n},Y\wedge E_n)$ for each $n\geq 0$ is given by
conjugation. In detail, for each $f:X\wedge P^{\wedge n}\to Y\wedge E_n$
and each $\tau\in\Sigma_n$ the morphism $\tau\cdot f$ is defined as the composition
   \begin{equation}\label{sigmaseq}
    X\wedge P^{\wedge n}\xrightarrow{X\wedge\tau^{-1}}X\wedge P^{\wedge n}\xrightarrow{f}Y\wedge E_n
       \xrightarrow{Y\wedge\tau}Y\wedge E_n.
   \end{equation}
With this definition each $\corr^E_n(X,Y)$ becomes a pointed $\Sigma_n$-set. Here $\Sigma_n$
acts on $P^{\wedge n}$ by permutations, using the commutativity and associativity isomorphisms.

Since the multiplication maps $\mu_{*,*}$ for $E$ are $\Sigma_n\times\Sigma_m$-equivariant and
the diagram\footnotesize
   $$\xymatrix{X\wedge P^{\wedge n}\wedge P^{\wedge m}
       &&&&Z\wedge E_n\wedge E_m\ar[r]^{Z\wedge\mu_{n,m}}&Z\wedge E_{n+m}\\
       X\wedge P^{\wedge n}\wedge P^{\wedge m}\ar[u]^{\id_X\wedge\alpha\wedge\beta}\ar[r]^{f\wedge P^{\wedge m}}
       &Y\wedge E_n\wedge P^{\wedge m}\ar[r]^{tw}
       &Y\wedge P^{\wedge m}\wedge E_n\ar[r]^{g\wedge E_n}
       &Z\wedge E_{m}\wedge E_n\ar[r]^{tw}
       &Z\wedge E_{n}\wedge E_m\ar[u]^{\id_Y\wedge\alpha\wedge\beta}\ar[r]^{Z\wedge\mu_{n,m}}
       &Z\wedge E_{n+m}\ar[u]_{\id_Y\wedge(\alpha\times\beta)}}$$
\normalsize is commutative for any $\alpha\in\Sigma_n,\beta\in\Sigma_m$, it follows that
the pairing $\phi_{X,Y,Z}$ is $\Sigma_n\times\Sigma_m$-equivariant.

If there is no likelihood of confusion, we shall simetimes write $(X\wedge P^{\wedge n},Y\wedge E_n)$ to denote
$\Hom_{\cc C}(X\wedge P^{\wedge n},Y\wedge E_n)$. The ``associativity square"\footnotesize
   $$\xymatrix{(X\wedge P^{\wedge n},Y\wedge E_n)\wedge(Y\wedge P^{\wedge m},Z\wedge E_m)
                      \wedge(Z\wedge P^{\wedge p},W\wedge E_p)\ar[r]^(.56){\id\wedge\mu_{m,p}}\ar[d]_{\mu_{n,m}\wedge\id}
                      &(X\wedge P^{\wedge n},Y\wedge E_n)\wedge(Y\wedge P^{\wedge m+p},W\wedge E_{m+p})\ar[d]^{\mu_{n,m+p}}\\
                      (X\wedge P^{\wedge n+m},Z\wedge E_{n+m})\wedge(Z\wedge P^{\wedge p},W\wedge E_{p})\ar[r]^(.55){\mu_{n+m,p}}
                      &(X\wedge P^{\wedge n+m+p},W\wedge E_{n+m+p})}$$
\normalsize is commutative for all $n, m, p\geq 0$ due to the associativity of the multiplication maps $\mu_{*,*}$ for $E$, and so
$\phi_{X,Y,Z}$ is an associative pairing.

The identity morphism is defined by
   $$u_X:X\bl{\rho^{-1}}\cong X\wedge S\xrightarrow{\id_X\wedge\iota_{0}}X\wedge E_0\in\corr_0^E(X,X),$$
where $\iota_0:S\to E_0$ is the unit map.
We see that $\cc B$ is enriched over $Sets_*^\Sigma$ by means of $(E,P)$-correspondences.

Suppose $E$ is a commutative ring object in $\cc C^\Sigma$.
If there is no likelihood of confusion, we shall simetimes write $[X,Y]$ to denote the $Sets_*^\Sigma$-object of morphisms~\eqref{skobki}.
To show that $\corr^E_*(\cc B)$ is a symmetric monoidal $Sets_*^\Sigma$-category, we need to define a $Sets_*^\Sigma$-functor
   $$\psi:\corr^E_*(\cc B)\wedge\corr^E_*(\cc B)\to\corr^E_*(\cc B),$$
where $\Ob(\corr^E_*(\cc B)\wedge\corr^E_*(\cc B))=\Ob\corr^E_*(\cc B)\times\Ob\corr^E_*(\cc B)$
and $[(X,Y),(X',Y')]=[X,X']\wedge[Y,Y']$ (see~\cite[p.~305]{Bor}). 
By definition, $\psi(X,Y)=X\wedge Y$ for all $(X,Y)\in\Ob(\corr^E_*(\cc B)\wedge\corr^E_*(\cc B))$.

Composition law in $\corr^E_*(\cc B)\wedge\corr^E_*(\cc B)$ is given by (see~\cite[p.~305]{Bor} for details)
   \begin{gather*}
   [(X,Y),(X',Y')]\wedge[(X',Y'),(X'',Y'')]=[(X,X')]\wedge[Y,Y']\wedge[(X',X'')]\wedge[Y',Y'']\xrightarrow{twist}\\
   [(X,X')]\wedge[(X',X'')]\wedge[Y,Y']\wedge[Y',Y'']\xrightarrow{\phi_{X,X',X''}\wedge\phi_{Y,Y',Y''}}[(X,X'')]\wedge[Y,Y'']=[(X,Y),(X'',Y'')].
   \end{gather*}
It sends each quadruple
$$(f:X\wedge P^{\wedge p}\to X'\wedge E_p,g:Y\wedge P^{\wedge q}\to Y'\wedge E_q)
\wedge(f':X'\wedge P^{\wedge s}\to X''\wedge E_s,g':Y'\wedge P^{\wedge t}\to Y''\wedge E_t)$$
to
$$(f:X\wedge P^{\wedge p}\to X'\wedge E_p,(\chi_{s,q},f':X'\wedge P^{\wedge s}\to X''\wedge E_s),
g:Y\wedge P^{\wedge q}\to Y'\wedge E_q,g':Y'\wedge P^{\wedge t}\to Y''\wedge E_t)$$
and then to the couple
   $$(\wt\chi_{s,q},f\circ f':X\wedge P^{\wedge p+s}\to X''\wedge E_{p+s},g\circ g':Y\wedge P^{\wedge q+t}\to Y''\wedge E_{q+t}),$$
where $\wt\chi_{s,q}\in\Sigma_{p+s+q+t}$ is the permutation $\id\times\chi_{s,q}\times\id\in\Sigma_{p}\times\Sigma_{s+q}\times\Sigma_{t}$.

Define
   $$\psi^{X,Y}_{X',Y'}:\corr^E_p(X,X')\wedge\corr^E_q(Y,Y')\to\corr^E_{p+q}(X\wedge Y,X'\wedge Y')$$
by sending $(f:X\wedge P^{\wedge p}\to X'\wedge E_p,g:Y\wedge P^{\wedge q}\to Y'\wedge E_q)$
to the composition
\begin{gather*}
X\wedge Y\wedge P^{\wedge p}\wedge P^{\wedge q}\xrightarrow{tw}
X\wedge P^{\wedge p}\wedge Y\wedge P^{\wedge q}\xrightarrow{f\wedge g}
X'\wedge E_p\wedge Y'\wedge E_q
\xrightarrow{tw} X'\wedge Y'\wedge E_p\wedge E_q\\
\xrightarrow{Y\wedge Y'\wedge\mu_{p,q}}X'\wedge Y'\wedge E_{p+q}.
\end{gather*}
The pairing $\psi^{X,Y}_{X',Y'}$ is plainly $\Sigma_p\times\Sigma_q$-equivariant.


We have a commutative diagram
   $$\xymatrix{E_p\wedge E_s\wedge E_q\wedge E_t\ar[d]_{\id\wedge\mu_{s,q}\wedge\id}\ar[rr]^{\id\wedge twist\wedge\id}
                      &&E_p\wedge E_q\wedge E_s\wedge E_t\ar[d]^{\id\wedge\mu_{q,s}\wedge\id}\ar[rr]^{\mu_{p,q}\wedge\mu_{s,t}}
                      &&E_{p+q}\wedge E_{s+t}\ar[d]^{\mu_{p+q,s+t}}\\
                      E_p\wedge E_{s+q}\wedge E_t\ar[d]_{\mu_{p,s+q}\wedge\id}\ar[rr]^{\id\wedge\chi_{s,q}\wedge\id}
                      &&E_p\wedge E_{q+s}\wedge E_t\ar[d]^{\mu_{p,q+s}\wedge\id}&&E_{p+q+s+t}\ar@{=}[ddll]\\
                      E_{p+s+q}\wedge E_t\ar[d]_{\mu_{p+s+q,t}}\ar[rr]^{\id\wedge\chi_{s,q}\wedge\id}
                      &&E_{p+q+s}\wedge E_t\ar[d]_{\mu_{p+q+s,t}}\\
                      E_{p+s+q+t}\ar[rr]_{\id\wedge\chi_{s,q}\wedge\id}&&E_{p+q+s+t}}$$
Commutativity of the top square follows from commutativity of the multiplication maps,
commutativity of the remaining squares follows from equivariancy of the multiplication maps.
It follows that
   $$\phi_{X\wedge Y,X'\wedge Y',X''\wedge Y''}(f\wedge g,f'\wedge g')=\phi_{X,X',X''}(f,f')\wedge\phi_{Y,Y',Y''}(g,g').$$\

For $X,Y\in\cc B$, the corresponding unit map of $\corr^E_*(\cc B)\wedge\corr^E_*(\cc B)$ is given by (see~\cite[p.~305]{Bor})
   $$S\xrightarrow{\rho^{-1}}S\wedge S\xrightarrow{u_X\wedge u_Y}[X,X]\wedge[Y,Y].$$
Clearly, $\psi^{X,Y}_{X,Y}$ takes it to the unit $u_{X\wedge Y}:S\to[X\wedge Y,X\wedge Y]$.

We see that $\psi:\corr^E_*(\cc B)\wedge\corr^E_*(\cc B)\to\corr^E_*(\cc B)$ is a $Sets_*^\Sigma$-functor.
It defines a symmetric monoidal $Sets_*^\Sigma$-category structure on $\corr^E_*(\cc B)$ with
the unit $S\in\Ob\corr^E_*(\cc B)$. $Sets_*^\Sigma$-natural associativity, symmetry
isomorphisms and two $Sets_*^\Sigma$-natural unit isomorphisms are inherited from the
same isomorphisms of the symmetric moinoidal category structure on $\cc B$. This completes the proof of the theorem.
\end{proof}

The proof of the theorem implies the following result.

\begin{cor}
Under the notation of Theorem~\ref{capitals}
any morphism of two ring objects $\gamma:E\to E'$ in $\cc C^\Sigma$ induces a morphism of $Sets_*^\Sigma$-categories
$\gamma_*:\corr^E_*(\cc B)\to\corr^{E'}_*(\cc B)$.
\end{cor}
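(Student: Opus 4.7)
The plan is to define $\gamma_*$ by acting as the identity on objects (so $\gamma_*(X)=X$ for all $X\in\Ob\cc B$) and, on morphism $Sets_*^\Sigma$-objects, by post-composing with $\gamma_n$ in the $E_n$-slot. Explicitly, at level $n$ set
$$(\gamma_*)_{X,Y}^n:\corr^E_n(X,Y)\to\corr^{E'}_n(X,Y),\qquad f\longmapsto (\id_Y\wedge\gamma_n)\circ f,$$
for any $f:X\wedge P^{\wedge n}\to Y\wedge E_n$. This is basepoint-preserving because $\gamma_n\circ 0=0$. I would first verify that each $(\gamma_*)_{X,Y}^n$ is $\Sigma_n$-equivariant: using the formula \eqref{sigmaseq} for the conjugation action, equivariance reduces to the identity $\gamma_n\circ\tau=\tau\circ\gamma_n$ for $\tau\in\Sigma_n$, which is exactly the $\Sigma_n$-equivariance of $\gamma_n$ required by the definition of a morphism of ring objects. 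Hence the maps $(\gamma_*)_{X,Y}=\{(\gamma_*)_{X,Y}^n\}_{n\geq 0}$ assemble into morphisms in $Sets_*^\Sigma$.

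Next I would check compatibility with composition, i.e.\ that
$$\phi^{E'}_{X,Y,Z}\circ\bigl((\gamma_*)_{X,Y}\wedge(\gamma_*)_{Y,Z}\bigr)=(\gamma_*)_{X,Z}\circ\phi^{E}_{X,Y,Z}.$$
Unwinding the definition of $\phi$ on a pair $(f,g)$ with $f:X\wedge P^{\wedge n}\to Y\wedge E_n$ and $g:Y\wedge P^{\wedge m}\to Z\wedge E_m$, both sides are compositions of the same sequence of maps $f\wedge P^{\wedge m}$, $tw$, $g\wedge E_?$, $tw$, followed either by $Z\wedge\mu^{E'}_{n,m}\circ(Z\wedge\gamma_n\wedge\gamma_m)$ or by $Z\wedge\gamma_{n+m}\circ(Z\wedge\mu^E_{n,m})$. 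These agree precisely by the multiplicativity condition $\gamma_{n+m}\circ\mu^{E}_{n,m}=\mu^{E'}_{n,m}\circ(\gamma_n\wedge\gamma_m)$, together with naturality of the twist isomorphisms. The unit condition is analogous: the identity morphism $u_X=(\id_X\wedge\iota^E_0)\circ\rho^{-1}$ is sent by $(\gamma_*)_{X,X}^0$ to $(\id_X\wedge\gamma_0\iota^E_0)\circ\rho^{-1}=(\id_X\wedge\iota^{E'}_0)\circ\rho^{-1}=u_X'$, using $\gamma_0\circ\iota^E_0=\iota^{E'}_0$.

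Altogether, the data $(\gamma_*)_{X,Y}$ form a $Sets_*^\Sigma$-functor $\corr^E_*(\cc B)\to\corr^{E'}_*(\cc B)$. There is no real obstacle here; the only point that requires a small diagram chase is the compatibility with $\phi$, but it is a routine consequence of the ring-morphism axioms combined with the $\Sigma$-equivariance already needed for $\gamma_*$ to land in $Sets_*^\Sigma$. In the commutative case, the same argument and the commutativity of the multiplication squares for $E$ and $E'$ show that $\gamma_*$ is moreover a symmetric monoidal $Sets_*^\Sigma$-functor, so the corollary holds in enriched and in enriched symmetric monoidal form.
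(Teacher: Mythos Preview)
Your argument is correct and is precisely the verification that the paper leaves implicit: the paper merely states that the corollary follows from the proof of Theorem~\ref{capitals}, and what you have written is exactly the routine unpacking of that claim using the $\Sigma_n$-equivariance, multiplicativity, and unit axioms for a morphism of ring objects. Your additional remark on the symmetric monoidal case goes slightly beyond the stated corollary but is also correct for the same reasons.
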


Day's Theorem~\cite{Day} together with Theorem~\ref{capitals} also imply the following result.

\begin{cor}
Under the notation of Theorem~\ref{capitals}
if $E$ is a commutative ring object in $\cc C^\Sigma$ then the category of 
$\corr^E_*(\cc B)$-modules is a closed symmetric
monoidal category.
\end{cor}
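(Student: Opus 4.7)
The plan is to apply Day's convolution theorem directly to the enriched category supplied by Theorem~\ref{capitals}. Fix the enriching base $\mathcal{V}=Sets_*^\Sigma$, which is a bicomplete closed symmetric monoidal category: limits and colimits are computed termwise in pointed sets with the induced $\Sigma_n$-actions, and the closed structure is the standard internal hom for symmetric sequences associated with the convolution product. This checks the two background hypotheses that Day's theorem requires of the enriching category.

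Next I would note that, since $E$ is commutative, Theorem~\ref{capitals} already gives that $\corr^E_*(\cc B)$ is a symmetric monoidal $\mathcal{V}$-category, with monoidal product on objects inherited from $\cc B$, unit $S$, and $\mathcal{V}$-natural associativity, symmetry, and unit isomorphisms. A right $\corr^E_*(\cc B)$-module is, unravelling definitions, exactly a $\mathcal{V}$-enriched functor $(\corr^E_*(\cc B))^{\op}\to\mathcal{V}$. So the claim amounts to endowing $[\,(\corr^E_*(\cc B))^{\op},\mathcal{V}\,]$ with a closed symmetric monoidal structure.

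This is precisely the output of Day's theorem~\cite{Day}: for any small symmetric monoidal $\mathcal{V}$-category $\mathcal{A}$ over a bicomplete closed symmetric monoidal $\mathcal{V}$, the enriched presheaf category $[\mathcal{A}^{\op},\mathcal{V}]$ carries a closed symmetric monoidal structure via Day convolution, whose monoidal product, unit, and internal hom are the coend $\int^{A,B}\mathcal{A}(-,A\wedge B)\wedge F(A)\wedge G(B)$, the representable at the monoidal unit, and the end $\int_A\mathcal{V}(F(A),G(-\wedge A))$, respectively. Applying this with $\mathcal{A}=\corr^E_*(\cc B)$ finishes the proof.

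The only genuine obstacle is set-theoretic: Day convolution requires a small indexing category, so one works either with a small skeleton of $\cc B$ or inside a fixed Grothendieck universe so that the coends defining $\boxtimes$ exist. No further verification is needed beyond Day's theorem, because all the enriched symmetric monoidal data of $\corr^E_*(\cc B)$ have already been packaged in Theorem~\ref{capitals}.
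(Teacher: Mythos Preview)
Your proposal is correct and is exactly the paper's own argument: the corollary is stated as an immediate consequence of Day's convolution theorem applied to the symmetric monoidal $Sets_*^\Sigma$-category $\corr^E_*(\cc B)$ supplied by Theorem~\ref{capitals}. You have simply spelled out the hypotheses and the Day convolution formulas in more detail than the paper does.
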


\section{Symmetric spectra}\label{symm}

Let $\cc C$ be the category from the previous section and let $T$ be an object of $\cc C$.
Following~\cite[Section~7]{H}
consider the free commutative monoid $Sym(T)$ on the object $(0,T,0,0,\ldots)$
of $\cc C^\Sigma$ . Then $Sym(T)$ is the symmetric sequence $(S,T,T^{\wedge 2},T^{\wedge 3},\ldots)$,
where $\Sigma_n$ acts on $T^{\wedge n}$ by permutation, using the commutativity and associativity isomorphisms.

\begin{dfn}
(1) Following~\cite[Definition~7.2]{H} the {\it category of symmetric spectra $Sp^\Sigma(\cc C,T)$} is
the category of modules in $\cc C^\Sigma$ over the commutative monoid
$Sym(T)$ in $\cc C$. That is, a symmetric spectrum $X$ is a sequence of
$\Sigma_n$-objects $X_n\in\cc C$ and $\Sigma_n$-equivariant maps $X_n\wedge T\to X_{n+1}$,
such that the composite
   $$X_n\wedge T^{\wedge p}\to X_{n+1}\wedge T^{\wedge p-1}\to\cdots\to X_{n+p}$$
is $\Sigma_n\times\Sigma_p$-equivariant for all $n,p\geq 0$. A map of symmetric spectra is a collection of
$\Sigma_n$-equivariant maps $X_n\to Y_n$ compatible with the structure maps of $X$ and $Y$.

(2) A {\it symmetric ring $T$-spectrum\/} is a ring spectrum $E\in\cc C^\Sigma$ such that there is another unit map
$\iota_1:T\to E_1$ subject to the following condition:

(Centrality) The diagram
   $$\xymatrix{E_n\wedge T\ar[d]_{twist}\ar[r]^{E_n\wedge\iota_1}&E_n\wedge E_1\ar[r]^{\mu_{n,1}}&E_{n+1}\ar[d]^{\chi_{n,1}}\\
                       T\wedge E_n\ar[r]_{\iota_1\wedge E_n}& E_1\wedge E_n\ar[r]_{\mu_{1,n}}& E_{1+n}}$$
commutes for all $n\geq 0$.

$E$ is {\it commutative\/} if it is commutative as a ring object of $\cc C^\Sigma$.
A {\it morphism $f:E\to E'$} of symmetric ring spectra is a morphism of ring objects in $\cc C^\Sigma$
such that $f_1\iota_1=\iota_1'$.

(3) A {\it right module\/} $M$ over a symmetric ring $T$-spectrum $E$ is a symmetric right $T$-spectrum
which is also a right $E$-module in the sense of Definition~\ref{emodule}. We denote the category of right
$E$-modules by $\Mod E$ (its morphisms are morphisms of symmetric $T$-spectra satisfying
Definition~\ref{emodule}).
\end{dfn}

Because $Sym(T)$ is a commutative monoid, the category $Sp^\Sigma(\cc C,T)$ is a
symmetric monoidal category, with $Sym(T)$ itself as the unit.
We denote the monoidal structure by $X{{\bigwedge}} Y=X\wedge_{Sym(T)}Y$, where $X\wedge_{Sym(T)}Y$
is defined similarly to~\cite[p.~499]{SS} as the coequalizer, in $\cc C^\Sigma$, of the two maps
$\xymatrix{X\wedge Sym(T)\wedge Y \ar@<-.5ex>[r] \ar@<.5ex>[r] & X\wedge Y}$ induced by the actions of
$Sym(T)$ on $X$ and $Y$ respectively.

Given $X\in\cc C$ and a pointed set $(K,*)$, we shall write $X\wedge K$ to denote $\bigvee_{K\setminus *}X$.
Let $P\in\cc C$ and $\sigma:P\to T$ be a morphism in $\cc C$.
Denote by $T^n:=T\wedge\bl{n}\cdots\wedge T$ (respectively $P^{\wedge n}=P\wedge\bl{n}\cdots\wedge P$). 
This notation is inherited from the standard notation for pointed motivic spaces $T^n$
and $\bb P^{\wedge n}$ associated with pointed motivic spaces $T=\bb A^1/(\bb A^1-\{0\})$ and $(\bb P^{1},\infty)$,
where $\sigma:(\bb P^{1},\infty)\to T$ is the canonical motivic equivalence given by the level 1 framed correspondence
$(\{0\},\bb A^1,t)\in\Fr_1(\pt,\pt)$.

In what follows $S^1$ is the standard simplicial circle with $n$-simplicies being $n_+=\{0,1,\ldots,n\}$
and $S^n:=S^1\wedge\bl{n}\cdots\wedge S^1$.
Given
a symmetric right $T$-spectrum $E$, let $\mathbf{1}^E$ denote the
following $S^1$-spectrum in $\mathbf S_\bullet$:
   $$\mathbf{1}^E:=(\Hom_{\cc C}(S,E_0),\Hom_{\cc C}( P,E_1\wedge S^1),
       \Hom_{\cc C}(P^{\wedge 2},E_2\wedge S^2),\ldots).$$
Here each $E_n\wedge S^n$ is performed in every degree to produce a simplicial object in $\cc C$ and a simplicial Hom-set.
We also call $\mathbf{1}^E$ the {\it $PT$-spectrum of $E$}.

Each simplicial Hom-set is pointed at the zero morphism. Each structure map
   $$u_n:\Hom_{\cc C}(P^{\wedge n},E_n\wedge S^n)\wedge S^1\to\Hom_{\cc C}(P^{\wedge n+1},E_{n+1}\wedge S^{n+1})$$
coincides termwise with the natural morphisms
   $$\bigvee\Hom_{\cc C}(P^{\wedge n},E_n\wedge S^n)\to\Hom_{\cc C}(P^{\wedge n+1},\bigvee(E_{n+1}\wedge S^{n})),$$
where coproducts are indexed by non-basepoint elements of $S_n^1 = n_+=\{0,1,\ldots,n\}$. They take an element
$f:P^{\wedge n}\to E_n\wedge S^n$ of the $k$th summand to the composition
   $$P^{\wedge n+1}\xrightarrow{f\wedge\sigma} (E_n\wedge S^n)\wedge T\bl{v^{-1}}\cong (E_n\wedge T)\wedge S^n
       \to E_{n+1}\wedge S^n\bl{\iota_k}\hookrightarrow\bigvee E_{n+1}\wedge S^n.$$
Here $\iota_k$ is the inclusion into the $k$th summand.
If $E=Sym(T)$ then $\mathbf{1}^E$ will be denoted by $\mathbf{1}^\Sigma$.

Though the author was unable to find the following result in the literature, he does not pretend to originality.

\begin{thm}\label{symmsp}
Given a symmetric right $T$-spectrum $E\in Sp^\Sigma(\cc C,T)$, the following statements are true:
\begin{enumerate}
\item the spectrum $\mathbf{1}^E$ is a symmetric $S^1$-spectrum;
\item if $E$ is a (commutative) symmetric ring $T$-spectrum, then
$\mathbf{1}^E$ is a (commutative) symmetric ring $S^1$-spectrum.
\end{enumerate}
\end{thm}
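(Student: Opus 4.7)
My strategy is to construct all the structure on $\mathbf{1}^E$ (symmetric group actions, multiplications, units) from the analogous data on $E$ by a uniform prescription: post-compose maps $P^{\wedge n}\to E_n\wedge S^n$ with the operations of $E$ on $E_n$ and with the corresponding simplicial permutations on $S^n$. Each axiom for $\mathbf{1}^E$ then reduces, by unpacking the definitions, to the analogous axiom for $E$ together with naturality of the symmetry in $\cc C$.

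For part~(1), I would first define a left $\Sigma_n$-action on $\mathbf{1}^E_n=\Hom_{\cc C}(P^{\wedge n},E_n\wedge S^n)$ by conjugation: for $\tau\in\Sigma_n$ and $f:P^{\wedge n}\to E_n\wedge S^n$, set $\tau\cdot f$ to be
$$P^{\wedge n}\xrightarrow{\tau^{-1}}P^{\wedge n}\xrightarrow{f}E_n\wedge S^n\xrightarrow{\tau}E_n\wedge S^n,$$
where $\tau$ acts on $P^{\wedge n}$ and on $S^n$ via the symmetry isomorphisms of $\cc C$ and on $E_n$ via the given symmetric-spectrum action. The structure map $u_n$ described in the excerpt is then manifestly $\Sigma_n$-equivariant, since the newly attached $T$ and $S^1$ factors are fixed by $\Sigma_n$. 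More generally, the iterated structure map $\mathbf{1}^E_n\wedge (S^1)^{\wedge p}\to\mathbf{1}^E_{n+p}$ is $\Sigma_n\times\Sigma_p$-equivariant because the $\Sigma_p$-action permutes the $p$ applied copies of $\sigma:P\to T$ on one side and the $p$ smash factors of $S^p$ on the other, and this matches the $\Sigma_n\times\Sigma_p$-equivariance of the iterated structure map $E_n\wedge T^{\wedge p}\to E_{n+p}$ of $E$.

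For part~(2), I would define the multiplication $\mu^{\mathbf{1}^E}_{n,m}:\mathbf{1}^E_n\wedge \mathbf{1}^E_m\to\mathbf{1}^E_{n+m}$ by sending a pair $(f,g)$ to the composite
$$P^{\wedge n+m}\xrightarrow{f\wedge g}E_n\wedge S^n\wedge E_m\wedge S^m\xrightarrow{tw}E_n\wedge E_m\wedge S^n\wedge S^m\xrightarrow{\mu_{n,m}\wedge\id}E_{n+m}\wedge S^{n+m}.$$
The unit $\iota_0^{\mathbf{1}^E}:S\to\mathbf{1}^E_0=\Hom_{\cc C}(S,E_0)$ is induced by $\iota_0:S\to E_0$; the auxiliary unit $\iota_1^{\mathbf{1}^E}:S^1\to\mathbf{1}^E_1$ sends a non-basepoint simplex to $P\xrightarrow{\sigma}T\xrightarrow{\iota_1}E_1\hookrightarrow E_1\wedge S^1$ into the corresponding summand. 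The associativity, unit, and centrality axioms for $\mathbf{1}^E$ then follow by substituting these definitions into the relevant diagrams and reducing to the associativity, unit, and centrality diagrams for $E$; commutativity of $\mathbf{1}^E$ (when $E$ is commutative) is obtained by the same procedure, using that $\Sigma_{n+m}$ acts diagonally on $E_{n+m}\wedge S^{n+m}$.

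The principal obstacle will be the bookkeeping of shuffle permutations. The twist in $\cc C^\Sigma$ involves $\chi_{q,p}$, and when passing through $\mu_{n,m}^{\mathbf{1}^E}$ one must check that the shuffle $\chi_{n,m}$ produced by commutativity of $E$ agrees with the shuffle $\chi_{n,m}$ intrinsic to the symmetric $S^1$-spectrum structure on $\mathbf{1}^E$; similarly the shuffle $\chi_{n,1}$ appearing in centrality of $E$ must match the corresponding shuffle on $S^{n+1}$. These matches are formal consequences of the diagonal character of the $\Sigma$-actions on $E_n\wedge S^n$, but writing out the diagrams cleanly is the heart of the verification.
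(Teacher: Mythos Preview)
Your proposal is correct and follows essentially the same approach as the paper: the conjugation $\Sigma_n$-action, the multiplication $\mu^{\mathbf{1}^E}_{n,m}$, and the units $i_0,i_1$ are defined exactly as in the paper, and each axiom is verified by unpacking the definition and reducing to the corresponding axiom for $E$ together with naturality of the symmetry and the diagonal $\Sigma_n$-action on $E_n\wedge S^n$. The paper differs only in that it writes out the equivariance and commutativity diagrams explicitly rather than describing them in prose, so your identification of the shuffle bookkeeping as the main content is on target.
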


\begin{proof}
(1). We follow~\cite[\S I.1]{Sch} to verify the relevant conditions for symmetric $S^1$-spectra.
The left action of the symmetric group $\Sigma_n$ on $\Hom_{\cc
C}(P^{\wedge n},E_n\wedge S^n)$ for each $n\geq 0$ is given by
conjugation. In detail, for each $f: P^{\wedge n}\to E_n\wedge S^n$
and each $\tau\in\Sigma_n$ the morphism $\tau\cdot f$ is defined as the composition
   \begin{equation}\label{sigman}
    P^{\wedge n}\xrightarrow{\tau^{-1}}P^{\wedge n}\xrightarrow{f}E_n\wedge S^n
       \xrightarrow{\tau\wedge\tau}E_n\wedge S^n.
   \end{equation}
With this definition each $\Hom_{\cc C}(P^{\wedge n},E_n\wedge
S^n)$ becomes a $\Sigma_n$-simplicial set. Here $\Sigma_n$
acts on $P^{\wedge n}$ and $S^n$ by permutations. 

There are natural morphisms
   $$\bigvee\Hom_{\cc C}(P^{\wedge n},E_n\wedge S^n)\to\Hom_{\cc C}(P^{\wedge (n+k)},
     \bigvee(E_{n+k}\wedge S^{n})),$$
where coproducts are indexed by non-basepoint elements of $S^k_\ell =
\ell_+^{\wedge k}=(\{1,\ldots,\ell\}^{\times k})_+$. They take an element
$f:P^{\wedge n}\to E_n\wedge S^n$ of the $(j_1,\ldots,j_k)$th
summand to the composition
   $$P^{\wedge n+k}\xrightarrow{f\wedge\sigma^{\wedge k}} (E_n\wedge S^n)\wedge T^k\bl{v^{-1}}\cong (E_n\wedge T^k)\wedge S^n
       \to E_{n+k}\wedge S^n\xrightarrow{\iota_{(j_1,\ldots,j_k)}}\bigvee E_{n+k}\wedge S^n.$$
Here $\iota_{(j_1,\ldots,j_k)}$ is the inclusion into the
$(j_1,\ldots,j_k)$th summand. The middle arrow comes from
$E_n\wedge T^k\to E_{n+k}$. It is induced by structure maps of the
symmetric $T$-spectrum $E$ and is
$(\Sigma_n\times\Sigma_k)$-equivariant. Each natural morphism above coincides termwise with
the composite map
   \begin{equation}\label{tutu}
     u_{n+k-1}\circ\cdots\circ
     (u_n\wedge\id):\Hom_{\cc C}(P^{\wedge n},E_n\wedge S^n)\wedge S^k
     \to\Hom_{\cc C}(P^{\wedge (n+k)},E_{n+k}\wedge S^{n+k}).
   \end{equation}

The fact that~\eqref{tutu} is $(\Sigma_n\times\Sigma_k)$-equivariant
follows from commutativity of the diagram
   $$\xymatrix{ P^{\wedge n+k}\ar[d]_{(\tau,\mu)}\ar[r]^(.4){f\wedge\sigma^{\wedge k}}
               &(E_n\wedge S^n)\wedge T^k\ar[r]^{\cong}\ar[d]_{\tau\wedge\tau\wedge\mu}
               &(E_n\wedge T^k)\wedge S^n\ar[r]\ar[d]_{\tau\wedge\mu\wedge\tau}
               &E_{n+k}\wedge S^n\ar[r]^{\iota_{(j_1,\ldots,j_k)}}\ar[d]_{(\tau,\mu)\wedge\tau}
               &\bigvee E_{n+k}\wedge S^n\ar[d]^{(\tau,\mu)\wedge\tau\wedge\mu^{S^k}}\\
               P^{\wedge n+k}
               &(E_n\wedge S^n)\wedge T^k\ar[r]^{\cong}
               &(E_n\wedge T^k)\wedge S^n\ar[r]&E_{n+k}\wedge
               S^n\ar[r]_{\iota_{(j_{\mu(1)},\ldots,j_{\mu(k)})}}
               &\bigvee E_{n+k}\wedge S^n}$$
in which $(\tau,\mu)\in\Sigma_n\times\Sigma_k$, $\mu^{S^k}$
permutes summands of the right upper corner by the rule
$(j_1,\ldots,j_k)\mapsto(j_{\mu(1)},\ldots,j_{\mu(k)})$.
We see that $\mathbf{1}^E$ is a symmetric $S^1$-spectrum.

(2). Suppose $E$ is a symmetric ring $T$-spectrum. Define multiplication maps
   $$\nu_{n,m}:\Hom_{\cc C}(P^{\wedge n},E_n\wedge S^n)\wedge\Hom_{\cc C}(P^{\wedge m},E_m\wedge S^m)
       \to\Hom_{\cc C}(P^{\wedge (n+m)},E_{n+m}\wedge S^{n+m}),\quad n,m\geq 0,$$
by
   $$(f,g)\mapsto(P^{\wedge(n+m)}\xrightarrow{f\wedge g}E_n\wedge S^n\wedge E_m\wedge S^m\cong
       E_n\wedge E_m\wedge S^{n+m}\xrightarrow{\mu_{n,m}\wedge\id}E_{n+m}\wedge S^{n+m}),$$
where $\mu_{n,m}$ is the multiplication map of $E$ (it is $\Sigma_n\times\Sigma_m$-equivariant).

The map $\nu_{n,m}$ is $\Sigma_n\times\Sigma_m$-equivariant. Indeed, this follows from commutativity of the diagram
   $$\xymatrix{P^{\wedge n+m}\ar[d]_{(\tau,\pi)}\ar[r]^(.35){f\wedge g}
               &E_n\wedge S^n\wedge E_m\wedge S^m\ar[r]^{\cong}\ar[d]_{\tau\wedge\tau\wedge\pi\wedge\pi}
               &E_{n}\wedge E_m\wedge S^{n+m}\ar[r]^{\mu_{n,m}\wedge\id}\ar[d]_{\tau\wedge\pi\wedge(\tau,\pi)}
               &E_{n+m}\wedge S^{n+m}\ar[d]^{(\tau,\pi)\wedge(\tau,\pi)}\\
               P^{\wedge n+k}
               &E_{n}\wedge S^n\wedge E_m\wedge S^{m}\ar[r]_{\cong}
               &E_{n}\wedge E_m\wedge S^{n+m}\ar[r]_{\mu_{n,m}\wedge\id}&E_{n+m}\wedge S^{n+m}}$$
in which $(\tau,\pi)\in\Sigma_n\times\Sigma_m$. Clearly, the ``associativity square"
   $$\xymatrix{(P^{\wedge n},E_n\wedge S^n)\wedge(P^{\wedge m},E_m\wedge S^m)
                      \wedge(P^{\wedge p},E_p\wedge S^p)\ar[r]^(.54){\id\wedge\nu_{m,p}}\ar[d]_{\nu_{n,m}\wedge\id}
                      &(P^{\wedge n},E_n\wedge S^n)\wedge(P^{\wedge(m+p)},E_{m+p}\wedge S^{m+p})\ar[d]^{\nu_{n,m+p}}\\
                      (P^{\wedge(n+m)},E_{n+m}\wedge S^{n+m})\wedge(P^{\wedge p},E_{p}\wedge S^{p})\ar[r]^(.55){\nu_{n+m,p}}
                      &(P^{\wedge(n+m+p)},E_{n+m+p}\wedge S^{n+m+p})}$$
is commutative for all $n, m, p\geq 0$ due to associativity of the multiplication maps $\mu_{*,*}$ for $E$.

Next, two unit maps
   \begin{equation}\label{units}
    i_0:S^0\to\Hom_{\cc C}(S,E_0)\quad\textrm{and}\quad i_1:S^1\to\Hom_{\cc C}(P,E_1\wedge S^1)
   \end{equation}
are defined as follows. Let $\iota_0:S\to E_0$ and $\iota_1:T\to E_1$ be
the unit maps for the symmetric ring $T$-spectrum $E$. Then
$i_0$ takes the unbased point of $S^0$ to $\iota_0$ and $i_1$
coincides termwise with the natural morphisms taking an unbased
simplex $j$ of $S^1_\ell=\{0,1,\ldots,\ell\}$ to the composite map
   $$P\xrightarrow{\sigma}T\xrightarrow{\iota_1}E_1\hookrightarrow\bigvee E_1,$$
where the right arrow is the inclusion into the $j$th summand.

The two ``unit composites"
   $$(P^{\wedge n},E_{n}\wedge S^{n})\cong(P^{\wedge n},E_{n}\wedge S^{n})\wedge S^0\xrightarrow{\id\wedge i_0}
       (P^{\wedge n},E_{n}\wedge S^{n})\wedge(S,E_0)\xrightarrow{\nu_{n,0}}(P^{\wedge n},E_{n}\wedge S^{n})$$
   $$(P^{\wedge n},E_{n}\wedge S^{n})\cong S^0\wedge(P^{\wedge n},E_{n}\wedge S^{n})\xrightarrow{i_0\wedge\id}
       (S,E_0)\wedge(P^{\wedge n},E_{n}\wedge S^{n})\xrightarrow{\nu_{0,n}}(P^{\wedge n},E_{n}\wedge S^{n})$$
are the identity for all $n\geq 0$ due to the same properties for $E$.

Furthermore, the ``centrality diagram"
   $$\xymatrix{(P^{\wedge n},E_{n}\wedge S^{n})\wedge S^1\ar[r]^(.4){\id\wedge i_1}\ar[d]_{twist}
                      &(P^{\wedge n},E_{n}\wedge S^{n})\wedge(P^{\wedge 1},E_1\wedge S^1)\ar[r]^(.57){\nu_{n,1}}
                      &(P^{\wedge n+1},E_{n+1}\wedge S^{n+1})\ar[d]^{\chi_{n,1}}\\
                      S^1\wedge(P^{\wedge n},E_{n}\wedge S^{n})\ar[r]^(.4){i_1\wedge\id}
                      &(P^{\wedge 1},E_1\wedge S^{1})\wedge(P^{\wedge n},E_{n}\wedge S^{n})\ar[r]^(.57){\nu_{1,n}}
                      &(P^{\wedge 1+n},E_{1+n}\wedge S^{1+n})}$$
commutes for all $n\geq 0$ due to the same properties for $E$. Here
$\chi_{n,m}\in\Sigma_{n+m}$ denotes the shuffle permutation which
moves the first $n$ elements past the last $m$ elements, keeping
each of the two blocks in order.

It follows that $\mathbf{1}^E$ is a symmetric ring $S^1$-spectrum.
Suppose $E$ is commutative. Then the square
 $$\xymatrix{(P^{\wedge n},E_{n}\wedge S^{n})\wedge(P^{\wedge m},E_m\wedge S^m)\ar[d]_{\nu_{n,m}}\ar[r]^{twist}
                     &(P^{\wedge m},E_m\wedge S^m)\wedge(P^{\wedge n},E_n\wedge S^n)\ar[d]^{\nu_{m,n}}\\
                     (P^{\wedge n+m},E_{n+m}\wedge S^{n+m})\ar[r]^{\chi_{n,m}}&(P^{\wedge m+m},E_{m+n}\wedge S^{m+n})}$$
commutes for all $n,m\geq 0$. We also use here commutativity of the diagram
   $$\xymatrix{P^{\wedge n+m}\ar[r]^(.35){f\wedge g}
                     &E_n\wedge S^n\wedge E_m\wedge S^m\ar[r]^\cong
                     &E_{n}\wedge E_m\wedge S^n\wedge S^m
                     \ar[r]^(.55){\mu_{n,m}\wedge\id}
                     &E_{n+m}\wedge S^{n+m}\\
                     P^{\wedge m+n}\ar[r]^(.35){g\wedge f}\ar[u]_{\chi_{m,n}}
                     &E_{m}\wedge S^m\wedge E_n\wedge S^n\ar[r]^\cong\ar[u]_{twist}
                     &E_{m}\wedge E_n\wedge S^m\wedge S^n\ar[r]^(.55){\mu_{m,n}\wedge\id}\ar[u]^{twist\wedge twist}
                     &E_{m+n}\wedge S^{m+n}\ar[u]_{\chi_{m,n}\wedge\chi_{m,n}}
                     } 
                     $$
for all $f:P^{\wedge n}\to E_n\wedge S^n$ and $g:P^{\wedge m}\to E_m\wedge S^m$. It follows that $\mathbf{1}^E$ is
commutative.
\end{proof}

\begin{cor}\label{symmspcor}
Suppose $E$ is a commutative symmetric ring $T$-spectrum.
The category of right $\mathbf 1^E$-modules $\Mod\mathbf 1^E$ is closed symmetric monoidal,
where $\mathbf 1^E$ is the commutative symmetric ring spectrum of Theorem~\ref{symmsp}.
\end{cor}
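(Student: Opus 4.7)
The plan is to deduce this as a purely formal consequence of Theorem~\ref{symmsp}(2) together with standard facts about commutative monoids in closed symmetric monoidal categories. The corollary is the $\Sp^\Sigma$-analogue of the corollary that immediately precedes Theorem~\ref{symmsp}, where the same result is obtained for $\corr^E_*(\cc B)$-modules via Day's Theorem; only the ambient enriching category is different.

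First, I would recall that the category $\Sp^\Sigma = \Sp^\Sigma(\mathbf S_\bullet,S^1)$ of symmetric $S^1$-spectra in pointed simplicial sets is itself a bicomplete closed symmetric monoidal category (this is the original construction of Hovey--Shipley--Smith and is also recorded in~\cite{H}). By Theorem~\ref{symmsp}(2), under the hypothesis that $E$ is a commutative symmetric ring $T$-spectrum the object $\mathbf{1}^E$ is a commutative ring spectrum in $\Sp^\Sigma$, i.e.\ a commutative monoid in the closed symmetric monoidal category $\Sp^\Sigma$.

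Next, I would invoke the standard principle (Day's Theorem~\cite{Day}, exactly as cited in the earlier corollary following Theorem~\ref{capitals}) that for any commutative monoid $R$ in a bicomplete closed symmetric monoidal category $\cc V$, the category $\Mod R$ of right $R$-modules is itself closed symmetric monoidal. Concretely, the tensor product $M\wedge_{\mathbf{1}^E} N$ is constructed as the coequalizer in $\Sp^\Sigma$ of the two maps
\[
M\wedge\mathbf{1}^E\wedge N \rightrightarrows M\wedge N
\]
induced by the right action of $\mathbf{1}^E$ on $M$ and (using commutativity) its left action on $N$; the unit is $\mathbf{1}^E$ itself; and the internal hom $\uhom_{\mathbf{1}^E}(M,N)$ is the corresponding equalizer of two maps $\uhom(M,N)\rightrightarrows \uhom(M\wedge\mathbf{1}^E,N)$ formed from the internal hom of $\Sp^\Sigma$. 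Associativity, unit, symmetry, and adjunction $(-\wedge_{\mathbf{1}^E} M)\dashv \uhom_{\mathbf{1}^E}(M,-)$ follow by standard diagram chases from the corresponding data in $\Sp^\Sigma$ together with the commutativity of $\mathbf{1}^E$.

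There is no genuine obstacle: all constructions are formal and parallel to the earlier Day-theorem corollary. The only points worth checking are (i) that the required coequalizers and equalizers exist in $\Sp^\Sigma$, which follows from its bicompleteness, and (ii) that the monoidal product of $\Sp^\Sigma$ preserves these coequalizers in each variable, which holds because $-\wedge X$ is a left adjoint in any closed symmetric monoidal category. This completes the proof plan.
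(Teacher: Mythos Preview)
Your proposal is correct and matches the paper's approach: the corollary is stated in the paper without any proof, being treated as an immediate consequence of Theorem~\ref{symmsp}(2) together with the standard fact that modules over a commutative monoid in a bicomplete closed symmetric monoidal category form a closed symmetric monoidal category. Your explicit unpacking of the coequalizer/equalizer constructions is more detailed than what the paper provides, but the underlying reasoning is the same.
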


\begin{lem}\label{ringmap}
Under the assumptions of Theorem~\ref{symmsp} two unit maps~\eqref{units} can be extended to
a ring morphism between symmetric ring spectra $\delta:\cc S\to\mathbf 1^E$, where $\cc S$ is the sphere spectrum.
\end{lem}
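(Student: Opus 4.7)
The plan is to extend $i_0$ and $i_1$ to a family of $\Sigma_n$-equivariant pointed maps
$$\delta_n \colon S^n \to \Hom_{\cc C}(P^{\wedge n}, E_n \wedge S^n)$$
by the natural generalisation of the formula for $i_1$. Writing $S^n_\ell = (\{1,\ldots,\ell\}^{\times n})_+$, send each non-basepoint $n$-simplex $(j_1,\ldots,j_n)$ to the composite
$$P^{\wedge n} \xrightarrow{\sigma^{\wedge n}} T^{\wedge n} \xrightarrow{\tau_n} E_n \xrightarrow{\iota_{(j_1,\ldots,j_n)}} \bigvee E_n \cong E_n\wedge S^n,$$
where $\tau_n \colon T^{\wedge n} \to E_n$ is the iterated multiplication $T^{\wedge n} \xrightarrow{\iota_1^{\wedge n}} E_1^{\wedge n} \to E_n$ built from the unit $\iota_1$ and the multiplication maps $\mu_{*,*}$ of $E$. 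Then $\delta_0 = i_0$ and $\delta_1 = i_1$ by construction, so the claim is that the family $(\delta_n)$ is a morphism of symmetric ring spectra $\cc S \to \mathbf{1}^E$.

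The first task is to verify $\Sigma_n$-equivariance of $\delta_n$. The key ingredient is that $\tau_n$ is $\Sigma_n$-equivariant for the permutation action on $T^{\wedge n}$ and the given action on $E_n$. This is a standard consequence of the centrality axiom in the definition of a symmetric ring $T$-spectrum together with the $\Sigma_n\times\Sigma_m$-equivariance of $\mu_{n,m}$, proved by induction on $n$ exactly as in \cite[\S I.1]{Sch}. Combined with the permutation equivariance of $\sigma^{\wedge n}$ on the source and the induced action on the summand indices of $\bigvee E_n$, one obtains equivariance of $\delta_n$.

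Next I would check compatibility of $(\delta_n)_{n\geq 0}$ with the structure maps, i.e.\ commutativity of the diagram whose edges are $\delta_n \wedge \id_{S^1}$, the sphere-spectrum structure map $S^n \wedge S^1 \cong S^{n+1}$, the map $u_n$ of $\mathbf{1}^E$, and $\delta_{n+1}$. Tracking summand labels $(j_1,\ldots,j_n,k)$ through both composites reduces this to the identity $\tau_{n+1} = \mu_{n,1}\circ(\tau_n \wedge \iota_1)$, which is immediate from the associativity of $\mu$.

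Finally, multiplicativity $\nu_{n,m}\circ(\delta_n \wedge \delta_m) = \delta_{n+m}$ (under $S^n \wedge S^m \cong S^{n+m}$) is a matter of unwinding the definition of $\nu_{n,m}$ given in the proof of Theorem~\ref{symmsp}: on each non-basepoint label both sides yield the composite
$$P^{\wedge (n+m)} \xrightarrow{\sigma^{\wedge (n+m)}} T^{\wedge (n+m)} \xrightarrow{\tau_n \wedge \tau_m} E_n \wedge E_m \xrightarrow{\mu_{n,m}} E_{n+m},$$
which equals $\tau_{n+m}$ again by associativity. Unit compatibility is built in, since the unit of $\cc S$ is the identity $S^0 \to \cc S_0$ and $\delta_0 = i_0$ is the unit of $\mathbf{1}^E$ by definition. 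The one genuine obstacle is the $\Sigma_n$-equivariance of $\tau_n$ via the centrality axiom; once this is in hand, all remaining verifications are formal diagram chases paralleling those already performed in the proof of Theorem~\ref{symmsp}.
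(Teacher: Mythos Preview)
Your proposal is correct and is precisely the argument the paper leaves to the reader: the paper's own proof consists of the single sentence ``This is straightforward.'' Your explicit construction of $\delta_n$ via $\tau_n=\mu\circ\iota_1^{\wedge n}$, together with the identification of $\Sigma_n$-equivariance of $\tau_n$ (from centrality) as the only nontrivial input, is exactly what that sentence is abbreviating.
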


\begin{proof}
This is straightforward.
\end{proof}

\section{Reconstructing stable homotopy theory $SH$}\label{galois}

We refer the reader to~\cite{Nee96} for basic facts on compactly generated triangulated categories. Below we will often use the following lemma.

\begin{lem}[see \cite{GJ}]\label{equ}
Let $\cc S$ and $\cc T$ be compactly generated triangulated categories. Suppose
there exists a set of compact generators $\Sigma$ in $\cc S$ and a triangulated functor 
$F:\cc S\to \cc T$ that preserves direct sums such that
\begin{itemize}
  \item[1.] the collection $\bigl\{F(X)|X\in \Sigma\bigr\}$ is a set of compact generators in $\cc T$\!,
  \item[2.] for any $X,Y$ in $\Sigma$, the induced map 
  $$F_{X,Y[n]}:\Hom_{\cc S} \bigl(X,Y[n]\bigr)\to \Hom_{\cc T}\bigl(FX,FY[n]\bigr)$$ is an isomorphism for all $n\in\bb Z$.
\end{itemize}
Then $F$ is an equivalence of triangulated categories.
\end{lem}

Let $Sp_{S^1,\bb G}(k)$ denote the category of symmetric $(S^1,\bb
G)$-bispectra associated with the closed symmetric monoidal 
category of pointed motivic spaces $\cc M$ (see our notation on p.~\pageref{ntn}).
Here the $\bb G$-direction corresponds to
the pointed motivic space $\bb G$, which is the mapping cone in $\cc M$ of the 
map $1:\pt_+\to(\bb G_m)_+$. 
The category $Sp_{S^1,\bb G}(k)$ is equipped with a
stable motivic model category structure~\cite{Jar}. Denote by $\mathsf{SH}(k)$
its homotopy category. 
If $\cc E$ is a bispectrum and $p,q$ are integers, recall that $\pi_{p,q}(\cc E)$ is the Nisnevich
sheaf of bigraded stable homotopy groups associated to the presheaf
   $$U\in\textrm{Sm}/k\longmapsto \mathsf{SH}(k)(\Sigma^\infty_{S^1}\Sigma^\infty_{\gmp_m^{\wedge 1}}U_+,S^{p-q}\wedge\bb G^{\wedge q}\wedge\cc E).$$
A map of bispectra $f:\cc E\to\cc E'$ is a stable motivic equivalence if and only if $\pi_{*,*}(f)$ is an isomorphism. 
The category $\mathsf{SH}(k)$ has a closed symmetric
monoidal structure with monoidal unit being the motivic sphere
bispectrum $\cc S_k:=\Sigma^\infty_{S^1}\Sigma^\infty_{\bb G} S^0$ (see~\cite{Jar} for details). 

The following theorem was proven by Levine~\cite{Lev} for
algebraically closed fields of characteristic zero, with embedding
$\bar k\hookrightarrow\bb C$ and extended by
Wilson--{\O}stv{\ae}r~\cite[Corollaries 1.2 and 6.5]{WO} to
arbitrary algebraically closed fields.

\begin{thm}\label{LWO}
Let $\bar k$ be an algebraically closed field of exponential
characteristic $e$ and $\cc S$ be the
sphere spectrum $\Sigma^\infty_{S^1}S^0$.
For all $n\geq 0$ the homomorphism $\bb L c:\pi_n(\cc
S)[e^{-1}]\to\pi_{n,0}(\cc S_{\bar k})[e^{-1}]$ is an isomorphism,
where $c:SH\to \mathsf{SH}(\bar k)$ is the functor induced by the functor
$c:\bf S_\bullet\to\cc M$ sending pointed simplicial sets to
constant motivic spaces.
\end{thm}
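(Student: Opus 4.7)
The plan is to combine Levine's theorem in characteristic zero with the Wilson--\O{}stv\ae{}r rigidity extension to arbitrary characteristic. The comparison map is itself formal: the constant-motivic-space functor $c\colon\mathbf{S}_\bullet\to\cc M$ is the left member of a Quillen adjunction with the global sections functor, so $\mathbf L c$ is the derived unit on stable homotopy groups, and what must be proved is that this derived unit becomes an isomorphism after inverting $e$.

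For characteristic zero, fix an embedding $\bar k\hookrightarrow\bb C$. Then complex Betti realization $Re_{\bb C}\colon SH(\bb C)\to SH$ furnishes a retraction of $c_{\bb C}\colon SH\to SH(\bb C)$, so it suffices to show that base change $SH(\bar k)\to SH(\bb C)$ and $Re_{\bb C}$ each induce isomorphisms on $\pi_{n,0}$. The central tool is Voevodsky's slice filtration applied to $\cc S_{\bar k}$, together with the Hopkins--Morel--Hoyois identification of the slices $s_q(\cc S_{\bar k})$ in terms of motivic Eilenberg--MacLane spectra twisted by the graded pieces of $MU_*$. Over an algebraically closed field, motivic cohomology vanishes in forbidden bidegrees, giving strong convergence of the slice spectral sequence and identifying its $E_2$-page with the classical Adams--Novikov $E_2$-page computing $\pi_*^s$. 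Since Betti realization is compatible with the slice filtration and induces the standard identification on each slice, this gives the isomorphism on $\pi_{n,0}$.

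To extend from characteristic zero to positive characteristic $p=e$, invoke rigidity. Let $W$ be a complete Henselian discrete valuation ring of mixed characteristic with algebraically closed residue field $\bar k$, and let $\bar K$ denote the algebraic closure of its fraction field. The specialization and generization functors $SH(\bar K)\leftarrow SH(W)\to SH(\bar k)$ induce isomorphisms on $\pi_{n,0}(\cc S)$ after inverting $p$, by rigidity of motivic cohomology (Suslin, Gabber) combined with the slice-tower argument applied uniformly over $W$. Chaining this with the characteristic-zero case already established for $\bar K$ yields the conclusion for $\bar k$.

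The main obstacle is, in the characteristic-zero step, the identification of the $E_2$-page of the slice spectral sequence with the $E_2$-page of the Adams--Novikov spectral sequence together with strong convergence at $\pi_{n,0}(\cc S_{\bar k})$. This depends on the Hopkins--Morel--Hoyois theorem and on vanishing bounds for motivic cohomology of $\bar k$, both of which are themselves deep inputs, the latter resting ultimately on the Beilinson--Lichtenbaum conjecture (Voevodsky's proof of Bloch--Kato). The rigidity step in positive characteristic, while technically delicate, is conceptually driven by established rigidity results for motivic cohomology with coefficients prime to $e$ and largely reduces to mixed-characteristic ingredients already present in Levine's proof.
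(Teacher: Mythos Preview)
The paper does not give its own proof of this theorem: it is stated as a result of Levine (characteristic zero, via an embedding $\bar k\hookrightarrow\bb C$) and Wilson--\O stv\ae r (arbitrary algebraically closed fields), with no further argument. So there is nothing in the paper to compare your write-up against beyond those citations.

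Your characteristic-zero sketch is an accurate summary of Levine's method: Betti realization as a retraction of $c$, the slice filtration on $\cc S_{\bar k}$, the Hopkins--Morel--Hoyois identification of the slices, vanishing of higher motivic cohomology of an algebraically closed field to get strong convergence, and the resulting identification with the Adams--Novikov $E_2$-page. That part is fine as an outline and matches what the paper is citing.

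Your positive-characteristic reduction, however, is not what the paper invokes. You propose a mixed-characteristic rigidity argument over a complete DVR $W$, transporting the result from $\bar K$ to $\bar k$ via the slice tower and Suslin--Gabber rigidity for motivic cohomology. This is a plausible strategy, but it is not the route in Wilson--\O stv\ae r; their argument proceeds by direct computation over finite fields (and their algebraic closures), again via the slice/motivic Adams--Novikov spectral sequence, together with known computations of mod-$\ell$ motivic cohomology of finite fields. Your rigidity approach also hides a nontrivial step: one needs convergence of the slice spectral sequence for the sphere over $W$ (or at least compatibility under base change sufficient to transfer convergence from the two fibers), and this is not an immediate consequence of rigidity for the individual slices. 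If you want to keep the rigidity route, you should either cite a source that establishes this convergence over the DVR after inverting $e$, or else replace the argument by a reference to Wilson--\O stv\ae r as the paper does.
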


The following statement was proven by
Zargar~\cite[Theorem~1.1]{Zargar} by using the stable \'etale
realisation functor.

\begin{cor}\label{LWOcor}
Let $\bar k$ be an algebraically closed field of exponential
characteristic $e$. The triangulated functor
   $$\bb L c:SH[1/e]\to \mathsf{SH}(\bar k)[1/e]$$
is full and faithful.
\end{cor}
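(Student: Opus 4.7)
The plan is to bootstrap Theorem~\ref{LWO}, which gives isomorphisms on the Hom groups between shifts of spheres, to full-and-faithfulness via a standard two-step localising-subcategory argument in compactly generated triangulated categories. First I would record that $\bb Lc:SH[1/e]\to SH(\bar k)[1/e]$ is a coproduct-preserving triangulated functor: the underived $c:\mathbf{S}_\bullet\to\cc M$ is a left Quillen functor (its right adjoint evaluates at $\spec\bar k$), so $\bb Lc$ has a right adjoint and commutes with arbitrary coproducts. Both $SH[1/e]$ and $SH(\bar k)[1/e]$ are compactly generated by the shifts of their sphere spectra, and by construction $\bb Lc(\Sigma^n\cc S)=\Sigma^n\cc S_{\bar k}$.

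The bootstrap then proceeds in two stages. Fix $m\in\bb Z$ and set
$$\cc B_m=\{Y\in SH[1/e]\mid[\Sigma^m\cc S,Y]\to[\Sigma^m\cc S_{\bar k},\bb Lc\,Y]\text{ is an isomorphism}\}.$$
Theorem~\ref{LWO}, together with suspension, places every $\Sigma^n\cc S$ in $\cc B_m$. The five lemma applied to the long exact sequences of Hom groups attached to a distinguished triangle makes $\cc B_m$ triangulated, while compactness of $\Sigma^m\cc S$ and $\Sigma^m\cc S_{\bar k}$, combined with preservation of coproducts by $\bb Lc$, makes $\cc B_m$ closed under arbitrary coproducts. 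Since $\cc S$ generates $SH[1/e]$ as a localising subcategory, $\cc B_m=SH[1/e]$. Next, fix $Y\in SH[1/e]$ and let
$$\cc A_Y=\{X\in SH[1/e]\mid[X,Y]\to[\bb Lc\,X,\bb Lc\,Y]\text{ is an isomorphism}\}.$$
The first stage shows $\Sigma^m\cc S\in\cc A_Y$ for all $m$; the five lemma makes $\cc A_Y$ triangulated; and since $[-,Y]$ and $[-,\bb Lc\,Y]$ convert coproducts into products while $\bb Lc$ preserves coproducts, $\cc A_Y$ is closed under coproducts. Hence $\cc A_Y=SH[1/e]$, which is precisely full-and-faithfulness.

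The only substantial input is Theorem~\ref{LWO} itself, which captures the deep comparison between classical and $\bb A^1$-local stable homotopy groups of spheres over $\bar k$ after inverting $e$; the surrounding argument is routine. The minor points one has to verify carefully are that inverting $e$ preserves compact generation of $SH$ by the sphere (the $[1/e]$-localisation is smashing, so compacts are preserved) and that $\bb Lc$ genuinely preserves small coproducts, both of which are standard.
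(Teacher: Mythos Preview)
Your proposal is correct and is essentially the same argument as the paper's: the paper simply packages your two-step localising-subcategory bootstrap into the citation of~\cite[Lemma~4.8]{GJ}, noting that $SH[1/e]$ and the image of $\bb Lc$ are compactly generated by the respective sphere spectra. The only point you might make explicit is that Theorem~\ref{LWO} is stated for $n\geq 0$, while your bootstrap needs all $n\in\bb Z$; the missing negative degrees are handled by connectivity of $\cc S$ and of $\cc S_{\bar k}$ (Morel's stable connectivity theorem), which is implicit in the paper as well.
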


\begin{proof}
Using Lemma~\ref{equ}, our statement follows from
Theorem~\ref{LWO} if we note that $SH[1/e]$ (respectively the image
of $SH[1/e]$) is compactly generated by $\cc S[1/e]$ (respectively
by $\cc S_{\bar k}[1/e]$).
\end{proof}

Recall from~\cite{GP1} that one of the equivalent ways to define Voevodsky's framed 
correspondences of level $n\geq0$ between smooth $k$-schemes
$X,Y\in\textrm{Sm}/k$ is as follows:
   $$\Fr_n(X,Y):=\Hom_{\cc M}(X_+\wedge\bb P^{\wedge n},Y_+\wedge T^n),$$
where $\bb P^{\wedge n}$ (respectively $T^n$) is the smash product of $n$ copies of $(\bb P^1,\infty)\in\cc M$ 
(respectively $T=\bb A^1/(\bb A^1-\{0\})\in\cc M$). 
   
\begin{rem}
Due to Voevodsky's lemma~\cite[Section~3]{GP1} there is an equivalent description of the sets
$\Fr_n(X,Y)$ in terms of explicit geometric data. In detail, each element in that description is the equivalence class
of a quadruple $(Z,U,\phi,g)$, where $Z$ is a closed subset of $\bb A^n_X$ which is finite over $X$, 
$U$ is an etale neighborhood of $Z$ in $\bb A^n_X$, $\phi=(\phi_1,\ldots,\phi_n)$ is a collection of regular functions
on $U$ such that $\cap_{i=1}^n\{\phi_i=0\}=Z$, and $g$ is a morphism from $U$ to $Y$. The equivalence relation
on such quadruples depends on the choice of the neighborhood $U$.
If the base field is $\bb C$
and $X=Y=\pt$, the Hom-sets $\Hom_{\cc M}(\bb P^{\wedge m},T^n)$, $m,n\geq 0$, can also be described in terms of
{\it holomorphic framed correspondences} -- see~\cite{GG22}. These are
equivalence classes of triples $(Z,U,f)$, where $Z$ consists of finitely many points in $\bb C^m$, $U$ is an open neighborhood
of $Z$, $f=(f_1,\ldots,f_n):U\to\bb C^n$ is a holomorphic map such that $Z=f^{-1}(0)$. The latter description essentially
follows from the Implicit Function Theorem in Complex Analysis.
\end{rem}

Following notation of~\cite{GP1} one sets for any finite pointed set $K$,
   $$\Fr_n(X,Y\otimes K):=\Hom_{\cc M}(X_+\wedge\bb P^{\wedge n},Y_+\wedge T^n\wedge K),\quad X,Y\in\textrm{Sm}/k.$$
There is a distinguished framed correspondence $\sigma:(\bb P^1,\infty)\to T$ in $\Fr_1(\pt,\pt)$ associated with the triple $(\{0\},\bb A^1,t)$.
The external smash product by $\sigma$ gives rise to a map $\Fr_n(X,Y\otimes K)\to \Fr_{n+1}(X,Y\otimes K)$.
One sets,
   $$\Fr(X,Y\otimes K):=\colim(\Fr_0(X,Y\otimes K)\xrightarrow{-\wedge\sigma}\Fr_{1}(X,Y\otimes K))\xrightarrow{-\wedge\sigma}\cdots).$$
Let $\Delta^\bullet_k$ denote the standard cosimplicial affine scheme $n\mapsto\spec(k[x_0,\ldots,x_n]/(x_0+\cdots+x_n-1))$ and
$C_*\Fr(X,Y\otimes K):=\Fr(X\times\Delta^\bullet_k,Y\otimes K)$. By the Additivity Theorem of~\cite{GP1} the assignment
   \begin{equation}\label{gammasp}
    K\in\Gamma^{\op}\mapsto C_*\Fr(X,Y\otimes K)\in\bf S_\bullet
   \end{equation}
gives rise to a special $\Gamma$-space of pointed simplicial sets. Its Segal $S^1$-spectrum $C_*\Fr(X,Y\otimes(\Sigma^\infty_{S^1}S^0))$ is denoted by
$M_{fr}(Y)(X)$ and is called the {\it framed motive of $Y$ evaluated at $X$}.

The following theorem was proven by Garkusha and Panin~\cite{GP1}
for algebraically closed fields of characteristic zero, with
embedding $\bar k\hookrightarrow\bb C$.

\begin{thm}[Garkusha--Panin~\cite{GP1}]\label{aaa}
Let $\bar k$ be an algebraically closed field of characteristic $0$.
Then the framed motive $M_{fr}(\pt)(\pt)$ of the point
$\pt=\spec(\bar k)$ evaluated at $\pt$ has the stable homotopy type
of the classical sphere spectrum $\cc S=\Sigma_{S^1}^\infty S^0$. If
$\bar k$ is an algebraically closed field of positive characteristic
$e>0$ then $M_{fr}(\pt)(\pt)[1/e]$, $\pt=\spec(\bar k)$, has the
stable homotopy type of $\cc S[1/e]$.
\end{thm}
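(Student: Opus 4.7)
The plan is to combine the main theorem on framed motives from \cite{GP1} with the topological realisation results Theorem~\ref{LWO} and Corollary~\ref{LWOcor}. First I would recall the core output of the framed motives machinery: for any infinite perfect field $k$, the connective $S^1$-spectrum $M_{fr}(\pt)(\pt)$ is canonically identified, up to stable equivalence, with the $(-,0)$-slice of the motivic sphere bispectrum $\cc S_{\bar k}=\Sigma^\infty_{S^1}\Sigma^\infty_{\bb G}S^0$. In particular, there is a natural isomorphism
$$\pi_n\bigl(M_{fr}(\pt)(\pt)\bigr)\cong \pi_{n,0}(\cc S_{\bar k}),\qquad n\geq 0,$$
over every algebraically closed field $\bar k$.

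Next I would invoke Theorem~\ref{LWO}: after inverting the exponential characteristic $e$, the induced map $c_*\colon \pi_n(\cc S)[1/e]\to\pi_{n,0}(\cc S_{\bar k})[1/e]$ is an isomorphism. Combining this with the isomorphism of the preceding step, and using that both $\cc S$ and $M_{fr}(\pt)(\pt)$ are connective $S^1$-spectra related by a natural comparison map coming from the functor $c\colon SH\to SH(\bar k)$ together with the framed motive construction, the resulting $\pi_*$-isomorphism upgrades to a stable equivalence $M_{fr}(\pt)(\pt)[1/e]\simeq \cc S[1/e]$. In characteristic zero one has $e=1$, so no inversion is needed and one recovers the original formulation of \cite{GP1}.

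The main obstacle is the first step---constructing and controlling the natural stable equivalence between $M_{fr}(\pt)(\pt)$ and the $(-,0)$-slice of the motivic sphere bispectrum. This is precisely the substantive content of \cite{GP1}: it requires the full apparatus of framed correspondences, including Voevodsky's cancellation theorem in the framed setting, strict $\bb A^1$-invariance of framed motives, and their Nisnevich-local properties over a perfect base. Once that step is in hand, the extension to positive characteristic is formal and rests entirely on the Wilson--{\O}stv{\ae}r extension of Levine's comparison theorem recorded in Theorem~\ref{LWO}.
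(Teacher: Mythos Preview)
Your proposal is correct and matches the paper's approach: the paper's proof simply says that the argument of \cite[Theorem~11.9]{GP1} goes through verbatim once Levine's characteristic-zero comparison is replaced by Corollary~\ref{LWOcor}, and what you have sketched is precisely that argument. The only cosmetic difference is that you invoke Theorem~\ref{LWO} directly where the paper cites its consequence Corollary~\ref{LWOcor}, which is not a substantive distinction.
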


\begin{proof}
The proof literally repeats that of~\cite[Theorem~11.9]{GP1} if we
use Corollary~\ref{LWOcor}.
\end{proof}

\begin{dfn}\label{galoissphere}
Given a field $k$, denote by
   $$\mathbf{1}_k:=(\Fr_0(\pt,\pt),\Fr_1(\pt,S^1),\Fr_2(\pt,S^2),\ldots),\quad\pt=\spec(k),$$
the right $S^1$-spectrum of pointed simplicial sets with structure
maps defined as
   $$\Fr_n(\pt,S^n)\xrightarrow{-\wedge\sigma}\Fr_{n+1}(\pt,S^n)\xrightarrow{}\uhom_{\mathbf S_\bullet}(S^1,\Fr_{n+1}(\pt,S^{n+1})).$$
The {\it sphere spectrum over a field $k$\/} is the $S^1$-spectrum
of pointed simplicial sets
   $$\mathsf{ S}_k:=(\Fr_0(\Delta^\bullet_k,\pt),\Fr_1(\Delta^\bullet_k,S^1),\Fr_2(\Delta^\bullet_k,S^2),\ldots).$$
\end{dfn}

The simplicial set $\Fr_m(\pt,S^m)$ can be regarded as the constant
simplicial pointed set $S^m$ together with the ``coefficients set"
$\Fr_m(\pt,\pt)$ attached to it. Hence the $S^1$-spectrum
$\mathbf{1}_k$ can be regarded as a graded ``tensor algebra"
associated to $S^1$ with ``$\Fr_*(\pt,\pt)$-coefficients". In other
words, we add $\Fr_*(\pt,\pt)$-coefficients to the classical sphere
spectrum $\cc S=(S^0,S^1,\ldots)$.

\begin{prop}\label{rings}
The spectra $\mathbf{1}_k$, $\mathsf{ S}_k$ are commutative symmetric ring
spectra in $Sp^\Sigma_{S^1}$.
\end{prop}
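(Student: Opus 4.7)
The strategy is to recognise $\mathbf{1}_k$ and $\mathsf{S}_k$ as instances of the construction $\mathbf{1}^E$ from Theorem~\ref{symmsp}, so that the (commutative) symmetric ring $S^1$-spectrum structure is obtained automatically from Theorem~\ref{symmsp}(2) applied to a suitable commutative symmetric ring $T$-spectrum $E$ coming from framed correspondences.

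To implement this, I would take $\cc C = \M$, $P = \bb G^{\wedge 1}$, and $T$ the target of the canonical morphism $\sigma:\bb G^{\wedge 1}\to T$ mentioned in the introduction and built in~\cite{GP1}. For $E$ I would take the commutative symmetric ring $T$-spectrum assembled from the framed correspondence sheaves of the point (respectively of the cosimplicial affine scheme $\Delta^\bullet_k$), arranged so that one has natural identifications
\begin{equation*}
\Hom_{\cc C}(P^{\wedge n},E_n\wedge S^n)\cong\Fr_n(\pt,S^n),\qquad \Hom_{\cc C}(\Delta^\bullet_{k+}\wedge P^{\wedge n},E_n\wedge S^n)\cong\Fr_n(\Delta^\bullet_k,S^n).
\end{equation*}
Under these identifications the $\Sigma_n$-actions coincide with those described in~\eqref{sigman}, and the structure maps of $\mathbf{1}^E$ produced by smashing with $\sigma$ in Theorem~\ref{symmsp} agree with the structure maps $-\wedge\sigma$ of Definition~\ref{galoissphere}. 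The commutative ring $T$-spectrum structure on $E$ reflects precisely the external product and composition pairings on framed correspondences established in~\cite{GP1}, so the multiplication, unit, centrality and commutativity diagrams required of $E$ hold by construction of the framed formalism. Theorem~\ref{symmsp}(2) then delivers at once that $\mathbf{1}_k=\mathbf{1}^E$ and $\mathsf{S}_k=\mathbf{1}^{E'}$ (with $E'$ the variant involving $\Delta^\bullet_{k+}$) are commutative symmetric ring $S^1$-spectra.

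The main obstacle is pinning down the correct $E$ so that the identifications above are strict rather than only weak equivalences, and checking that the $\Sigma_n$-equivariance together with the associativity, unit and centrality squares for $\mathbf{1}^E$ translate literally from the corresponding diagrams for framed composition $\Fr_n(X,Y)\times\Fr_m(Y,Z)\to\Fr_{n+m}(X,Z)$. This is a routine bookkeeping exercise, not a conceptual one: once $E$ is written down explicitly and the identifications verified termwise, the whole proposition reduces to a direct application of Theorem~\ref{symmsp}(2), which was the point of introducing that machinery in the first place.
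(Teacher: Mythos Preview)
Your high-level strategy---reduce everything to Theorem~\ref{symmsp}(2)---is exactly what the paper does, but your choice of inputs is off and that creates a real gap.

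First, $P$ should be $\bb P^{\wedge 1}=(\bb P^1,\infty)$, not $\bb G^{\wedge 1}$; the map $\sigma$ referred to in the introduction and in Definition~\ref{galoissphere} is the canonical morphism $\bb P^{\wedge 1}\to T=\bb A^1/(\bb A^1-\{0\})$. More importantly, your proposed $E$ is both vague and unnecessarily complicated. You want to ``assemble'' $E$ from framed correspondence sheaves so as to force the identification $\Hom_{\cc C}(P^{\wedge n},E_n\wedge S^n)\cong\Fr_n(\pt,S^n)$, and you flag this as the main obstacle. But there is no obstacle: one simply takes $E$ to be the motivic sphere $T$-spectrum $\cc S_k=Sym(T)=(S^0,T,T^2,\ldots)$, which is tautologically a commutative symmetric ring $T$-spectrum. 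The identification $\Fr_n(\pt,S^n)=\Hom_{\cc M}(\bb P^{\wedge n},T^n\wedge S^n)$ is then the very definition of $\Fr_n$ in~\cite{GP1} (via Voevodsky's Lemma), so $\mathbf 1_k=\mathbf 1^E$ on the nose with no bookkeeping at all. In other words, framed correspondences are not encoded in $E$; they \emph{are} the Hom-sets that appear in $\mathbf 1^E$ once $E$ is the sphere.

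For $\mathsf S_k$ the paper again keeps $E=\cc S_k$ and instead replaces the source $\pt$ by $\Delta^\bullet_k$; the multiplication then uses the cosimplicial diagonal $\diag:\Delta^\bullet_k\to\Delta^\bullet_k\times\Delta^\bullet_k$, and commutativity follows because the diagonal is cocommutative. Your alternative of changing $E$ to some $E'$ involving $(\Delta^\bullet_k)_+$ runs into trouble: for $E'_n=(\Delta^\bullet_k)_+\wedge T^n$ to be a ring object you would need a map $(\Delta^\bullet_k)_+\wedge(\Delta^\bullet_k)_+\to(\Delta^\bullet_k)_+$, and the diagonal goes the wrong way. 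So this variant does not literally fit the hypotheses of Theorem~\ref{symmsp}; the paper's approach of moving $\Delta^\bullet_k$ to the source side is the clean fix.
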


\begin{proof}
We apply Theorem~\ref{symmsp}: $\cc C$ is replaced by the category
of pointed motivic spaces $\cc M$, $P$ (respectively $T$) is
replaced by $\bb P^{\wedge 1}:=(\bb P^1,\infty)$ (respectively by
$T=\bb A^1/(\bb A^1-\{0\})$), $E$ is replaced by the commutative
symmetric sphere $T$-spectrum $\cc S_k=(S^0,T,T^2,\ldots)$. With
this notation $\mathbf{1}_k=\mathbf{1}^E$ of Theorem~\ref{symmsp},
and hence a commutative symmetric ring spectrum.

The commutative symmetric ring structure on $\mathsf{ S}_k$ is
defined in a similar fashion if we use the cosimplicial diagonal
morphism
$\diag:\Delta^\bullet_k\to\Delta_k^\bullet\times\Delta_k^\bullet$.
\end{proof}

Denote by $\Mod{\mathsf{ S}_{k}}$ the category of right $\mathsf{
S}_{k}$-modules in $Sp^\Sigma_{S^1}$. The natural map of ring
objects $\cc S\to{\mathsf{ S}_{k}}$ induces a pair of adjoint
functors
   \begin{equation}\label{adjoint}
     L:Sp^\Sigma_{S^1}\rightleftarrows\Mod{\mathsf{ S}_{k}}:U,
   \end{equation}
where $U$ is the forgetful functor and its left adjoint $L$ is the
``extension of framed scalars" functor.
Following~\cite[Section~4]{SS}, we define the stable model structure
on $\Mod{\mathsf{ S}_{k}}$ by calling a map $f$ of ${\mathsf{
S}_{k}}$-spectra a stable equivalence or fibration if so is $U(f)$.
By~\cite[Theorem~4.1]{SS} this model structure is also cofibrantly
generated monoidal satisfying the monoid axiom. By construction,
$(L,U)$ is a Quillen pair.

\begin{dfn}
The category $\Mod{\mathsf{ S}_{k}}$ is called the {\it category of
framed symmetric $S^1$-spectra over a field $k$}. The {\it stable
homotopy category over a field $k$}, denoted by $SH_k$, is defined
as the homotopy category of $\Mod{\mathsf{ S}_{k}}$ with respect to
the stable model structure. $SH_k$ is a closed symmetric monoidal
category.
\end{dfn}

If $N\in Sp^{\Sigma}_{S^1}$ is a symmetric right $S^1$-spectrum,
define an $S^1$-spectrum
   $$\Fr_*^\Sigma(N):=(N_0,\Hom_{\cc M}(\bb P^{\wedge 1},T\wedge N_1),
       \Hom_{\cc M}(\bb P^{\wedge 2},T^2\wedge N_2),\ldots).$$
Each structure map
   $$\upsilon_n:\Hom_{\cc M}(\bb P^{\wedge n},T^n\wedge N_n)\wedge S^1\to
     \Hom_{\cc M}(\bb P^{\wedge n+1},T^{n+1}\wedge N_{n+1})$$
coincides termwise with the natural morphisms
   $$\bigvee\Hom_{\cc M}(\bb P^{\wedge n},T^n\wedge N_n)\to
     \Hom_{\cc M}(\bb P^{\wedge n+1},T^{n+1}\wedge N_{n+1}),$$
where coproducts are indexed by non-basepoint elements of $S_\ell^1
=\ell_+=\{0,1,\ldots,\ell\}$. They take an element $f:\bb P^{\wedge
n}\to T^n\wedge (N_n)_\ell$ of the $k$th summand to the composition
   \begin{multline*}
       \bb P^{\wedge n+1}\xrightarrow{f\wedge\sigma} (T^n\wedge(N_n)_\ell)\wedge T\cong T^{n+1}\wedge(N_n)_\ell
       \bl{\iota_k}\hookrightarrow\bigvee T^{n+1}\wedge(N_n)_\ell=\\
       =(T^{n+1}\wedge(N_n)_\ell)\wedge S^1_\ell
       \cong T^{n+1}\wedge(N_n\wedge S^1)_\ell\xrightarrow{\id\wedge u_n}
       T^{n+1}\wedge(N_{n+1})_\ell,
   \end{multline*}
where $u_n$ is the $n$th structure map of $N$. We can equivalently
define $\upsilon_n$ by the composition
   \begin{multline*}
     \Hom_{\cc M}(\bb P^{\wedge n},T^n\wedge N_n)\wedge S^1\to\Hom_{\cc M}(\bb P^{\wedge n},T^n\wedge N_n\wedge S^1)
     \xrightarrow{-\wedge\sigma}\Hom_{\cc M}(\bb P^{\wedge n+1},T^n\wedge N_n\wedge S^1\wedge
     T)\cong\\ \cong
     \Hom_{\cc M}(\bb P^{\wedge n+1},T^{n+1}\wedge N_n\wedge S^1)
     \xrightarrow{(u_n)_*}\Hom_{\cc M}(\bb P^{\wedge n},T^n\wedge N_{n+1}).
   \end{multline*}

We also define an $S^1$-spectrum
   $$C_*\Fr_*^\Sigma(N):=(N_0,\Hom_{\cc M}((\Delta_k^\bullet)_+\wedge\bb P^{\wedge 1},T\wedge N_1),
       \Hom_{\cc M}((\Delta_k^\bullet)_+\wedge\bb P^{\wedge 2},T^2\wedge N_2),\ldots)$$
with the structure maps defined as above.

$\Fr_*^\Sigma(N)$ and $C_*\Fr_*^\Sigma(N)$ are symmetric $S^1$-spectra for the
same reason as $\mathbf 1^E$ and $\mathsf{ S}_{k}$ are.

Following notation of~\cite{GNP,GP1} one sets for any finite pointed set $K$ and any integer $k\geq 0$,
   $$\Fr_n(X,(Y\times T^k)\otimes K):=\Hom_{\cc M}(X_+\wedge\bb P^{\wedge n},Y_+\wedge T^{k+n}\wedge K),\quad X,Y\in\textrm{Sm}/k.$$
The external smash product by $\sigma$ gives rise to a map $\Fr_n(X,(Y\times T^k)\otimes K)\to \Fr_{n+1}(X,(Y\times T^k)\otimes K)$.
One sets,
   $$\Fr(X,(Y\times T^k)\otimes K):=\colim(\Fr_0(X,(Y\times T^k)\otimes K)\xrightarrow{-\wedge\sigma}\Fr_{1}(X,(Y\times T^k)\otimes K))\xrightarrow{-\wedge\sigma}\cdots)$$
and 
   $$C_*\Fr(X,(Y\times T^k)\otimes K):=\Fr(X\times\Delta^\bullet_k,(Y\times T^k)\otimes K).$$ 
By the Additivity Theorem of~\cite{GP1} the assignment
   \begin{equation*}\label{gammasptk}
    K\in\Gamma^{\op}\mapsto C_*\Fr(X,(Y\times T^k)\otimes K)\in\bf S_\bullet
   \end{equation*}
gives rise to a special $\Gamma$-space of pointed simplicial sets. The pointed motivic space
$X\in\textrm{Sm}/k\mapsto C_*\Fr(X,(Y\times T^k)\otimes K)\in\bf S_\bullet$ is denoted by $C_*\Fr((Y\times T^k)\otimes K)$ in~\cite{GNP,GP1}.
If $Y=\pt$ the latter motivic space is denoted by $C_*\Fr(T^k\otimes K)$. 

\begin{dfn}\label{sigmafrmot}
The {\it symmetric framed motive\/} of a symmetric $S^1$-spectrum
$N\in Sp^\Sigma_{S^1}$ is the symmetric $S^1$-spectrum
   $${M}^\Sigma_{fr}(N):=(C_*\Fr(\pt,\pt\otimes N_0),\Hom_{\cc M}(\bb P^{\wedge 1},C_*\Fr(T\otimes N_1)),
       \Hom_{\cc M}(\bb P^{\wedge 2},C_*\Fr(T^2\otimes N_2)),\ldots)$$
with structure maps and actions of the symmetric groups defined similarly to $C_*\Fr_*^\Sigma(N)$. The framed
motive of the suspension spectrum $\Sigma_{S^1}^\infty X$ of a
pointed simplicial set $X$ will be denoted by ${M}^\Sigma_{fr}(X)$.
\end{dfn}

\begin{rem}\label{mfr}
If $N$ is the suspension symmetric spectrum $\Sigma_{S^1}^\infty X$ of a
pointed simplicial set $X$, the framed motive in the sense of the
preceding definition is a bit different from the framed motive of
$X$ evaluated at $\pt$ defined as
   $${M}_{fr}(X):=C_*\Fr(\pt,\pt\otimes(\Sigma_{S^1}^\infty X)),$$
where $K\in\Gamma^{\op}\mapsto C_*\Fr(\pt,\pt\otimes K)\in\bf S_\bullet$ is the special $\Gamma$-space~\eqref{gammasp}.
\end{rem}

\begin{lem}\label{mfrsymm}
If $k$ is perfect field, then the canonical map of ordinary
$S^1$-spectra $M_{fr}(X)\to{M}^\Sigma_{fr}(X)$ is a level
equivalence in positive degrees for any pointed simplicial set $X$.
\end{lem}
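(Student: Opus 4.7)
The plan is to work level-by-level and identify the canonical map, at each positive level, with an iterated comparison that reduces to the $T$-cancellation theorem for framed motives over perfect fields.

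At level $n\geq 1$ the map in question takes the form
$$\phi_n:C_*\Fr(X\wedge S^n)\longrightarrow\Hom_{\cc M}(\bb P^{\wedge n},C_*\Fr(T^n\wedge X\wedge S^n)),$$
whose adjoint factors as
$$\bb P^{\wedge n}\wedge C_*\Fr(X\wedge S^n)\xrightarrow{\sigma^n\wedge\id}T^n\wedge C_*\Fr(X\wedge S^n)\longrightarrow C_*\Fr(T^n\wedge X\wedge S^n),$$
the second arrow being obtained by applying $C_*\Fr(-)$ functorially to the smash with $T^n$. I would separate the two features this encodes: the replacement of $\bb P^{\wedge n}$ by $T^n$ via $\sigma^n$, and the ``$T$-loop assembly'' on $C_*\Fr(-)$.

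For the first feature, $\sigma:\bb P^{\wedge 1}\to T$ is a motivic weak equivalence, so precomposition with $\sigma^n$ induces an equivalence $\Hom_{\cc M}(T^n,Z)\to\Hom_{\cc M}(\bb P^{\wedge n},Z)$ whenever $Z$ is motivically fibrant. For the second, the essential input is the $T$-cancellation theorem for framed motives of Garkusha--Panin, valid over perfect fields: for any pointed motivic space $Y$ the canonical assembly map
$$C_*\Fr(Y)\longrightarrow\Omega_T C_*\Fr(T\wedge Y)$$
is a motivic weak equivalence. Iterating $n$ times with $Y=X\wedge S^n$, and combining with motivic fibrancy of the target $C_*\Fr(T^n\wedge X\wedge S^n)$ (again an output of Garkusha--Panin theory requiring $k$ perfect), $\phi_n$ becomes a motivic level equivalence.

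The main obstacle is the cancellation theorem itself: it rests on strict $\bb A^1$-invariance of stabilised framed linear presheaves, and this is precisely why the hypothesis that $k$ be perfect cannot be dropped. By comparison, the identification along $\sigma^n$ and the adjunction gymnastics are formal. The restriction to positive degrees is harmless: at level $0$ the two constructions tautologically agree as $C_*\Fr(X)$, and the $T$-cancellation machinery is only meaningfully applicable in the form above from $n=1$ onward.
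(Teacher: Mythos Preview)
Your argument lands on the wrong target. The lemma asserts a level equivalence of ordinary $S^1$-spectra of simplicial sets (this is the context of Section~\ref{galois}, where everything feeds into $\Mod\mathsf S_k\subset Sp^\Sigma_{S^1}$), yet your conclusion is only that $\phi_n$ is a \emph{motivic} level equivalence. A motivic equivalence of motivic spaces is not in general a sectionwise weak equivalence; to promote it to one you would need both source and target to be motivically fibrant, but you only claim this for the target $C_*\Fr(T^n\wedge X\wedge S^n)$ and say nothing about $C_*\Fr(X\wedge S^n)$. Without that, evaluating your motivic equivalence at $\pt$ does not give the required weak equivalence of simplicial sets.

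There is also an imprecision in the key input. What you call the ``$T$-cancellation theorem for framed motives of Garkusha--Panin'' is not the cancellation theorem in the literature: the theorem of \cite{AGP} is a $\mathbb G_m$-cancellation statement, and the assembly map $C_*\Fr(Y)\to\Omega_T C_*\Fr(T\wedge Y)$ you write down (adjoint to a putative map $T\wedge C_*\Fr(Y)\to C_*\Fr(T\wedge Y)$) is neither defined nor shown to be a motivic equivalence in what you cite. The paper's proof takes a different, more direct route: it invokes \cite[Lemma~4.12]{GN}, which shows that the map
\[
C_*\Fr(X\wedge S^\ell)\longrightarrow\Hom_{\cc M}\bigl(\bb P^{\wedge k},C_*\Fr(T^k\wedge X\wedge S^\ell)\bigr)
\]
is a genuine weak equivalence of \emph{connected} simplicial sets for $k,\ell>0$ (connectedness coming from the $S^\ell$ factor is what makes the argument go through; this, rather than your formal remark about $n\geq 1$, is the actual reason for the restriction to positive degrees). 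That argument does not pass through motivic fibrancy or a two-step factorisation along $\sigma^n$; it works with the explicit colimit description of $C_*\Fr$ and the comparison of the two $\sigma$-directions directly.
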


\begin{proof}
Repeating the proof of~\cite[Lemma~4.12]{GN} word for word, we have
that the canonical map of connected spaces
   $$C_*\Fr(\pt,\pt\otimes(X\wedge S^\ell))\to\Hom_{\cc M}(\bb P^{\wedge k},C_*\Fr(T^k\otimes( X\wedge S^\ell))),\quad k,\ell>0,$$
is a weak equivalence. It follows that the map
$M_{fr}(X)\to{M}^\Sigma_{fr}(X)$ is a level equivalence of spectra
in positive degrees.
\end{proof}

The reader should not confuse $\pi_*$-isomorphisms (i.e. maps
inducing isomorphisms of stable homotopy groups) and stable
equivalences of symmetric spectra. The first class is a proper
subclass of the second.

Though ${M}_{fr}(X)$ is canonically a symmetric
$S^1$-spectrum, where $\Sigma_n$ acts on each space by permuting
$S^n$, the point is that it is not a $\mathsf{S}_k$-module in
contrast with ${M}^\Sigma_{fr}(X)$.

\begin{prop}\label{frmodule}
Given a field $k$ and $N\in Sp_{S^1}^\Sigma$, the following
statements are true:
\begin{enumerate}
\item $C_*\Fr_*^\Sigma(N)$ and ${M}^\Sigma_{fr}(N)$ are right $\mathsf{S}_{k}$-modules;
\item every map of symmetric $S^1$-spectra $f:N\to N'$
induces morphisms of right $\mathsf{ S}_{k}$-modules
$C_*\Fr_*^\Sigma(f):C_*\Fr_*^\Sigma(N)\to C_*\Fr_*^\Sigma(N')$ and
${M}_{fr}^\Sigma(f):{M}_{fr}^\Sigma(N)\to {M}_{fr}^\Sigma(N')$;
\item the canonical map
$\alpha:{\mathsf{S}_{k}}\to M_{fr}^\Sigma(S^0)$ in $\Mod
\mathsf{S}_k$ is a $\pi_*$-isomorphism (i.e. a stable equivalence of
ordinary spectra) whenever the base field $k$ is perfect.
\item for every $N\in Sp_{S^1}^\Sigma$ the canonical map
$\beta:C_*\Fr_*^\Sigma(N)\to{M}^\Sigma_{fr}(N)$
is a $\pi_*$-isomorphism whenever the base field $k$ is perfect.
\end{enumerate}
\end{prop}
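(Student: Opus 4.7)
Parts (1) and (2) are formal structural checks mirroring Theorem~\ref{symmsp}; part (3) combines Lemma~\ref{mfrsymm} with Theorem~\ref{aaa}; part (4) is the main obstacle and requires a reduction to suspension spectra together with the framed motives calculations of~\cite{GP1}.

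For (1) I will define the right $\mathsf{S}_k$-action on $C_*\Fr_*(N)$ at levels $(n,m)$ by sending a pair $(f,g)$ with $f \in \Hom_{\cc M}((\Delta_k^\bullet)_+ \wedge \bb P^{\wedge n}, T^n \wedge N_n)$ and $g \in \Hom_{\cc M}((\Delta_k^\bullet)_+ \wedge \bb P^{\wedge m}, T^m \wedge S^m)$ to the composite
\[
(\Delta_k^\bullet)_+ \wedge \bb P^{\wedge(n+m)} \xrightarrow{\diag \wedge \id} (\Delta_k^\bullet)_+ \wedge (\Delta_k^\bullet)_+ \wedge \bb P^{\wedge n} \wedge \bb P^{\wedge m} \xrightarrow{f \wedge g} T^{n+m} \wedge N_n \wedge S^m \to T^{n+m} \wedge N_{n+m},
\]
where the last arrow iterates the $S^1$-structure maps of $N$. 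Associativity, unitality, $(\Sigma_n \times \Sigma_m)$-equivariance, and compatibility with the structure maps $\upsilon_n$ all follow from exactly the same diagram chases as in the proof of Theorem~\ref{symmsp}, with $E_n$ replaced by $T^n \wedge N_n$ and the multiplications $\mu_{n,m}$ of $E$ replaced by the combination of $\mathsf{S}_k$'s multiplication and the $S^1$-structure maps of $N$. The same recipe works verbatim for $M_{fr}^\Sigma(N)$ after applying $C_*\Fr$ to targets. Part (2) is naturality in $N$ of every arrow above.

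For (3), Lemma~\ref{mfrsymm} reduces the assertion to showing that the underlying map $\mathsf{S}_k \to M_{fr}(S^0)$ is a $\pi_*$-iso of non-symmetric spectra. Theorem~\ref{aaa} identifies the target with $\cc S[1/e]$. For the source, the $n$-th space $\Fr_n(\Delta_k^\bullet, S^n)$ is a canonical subspace of $C_*\Fr(S^n) = \colim_m \Fr_m(\Delta_k^\bullet, S^n)$, and the $\sigma$-stabilization inside $\mathsf{S}_k$ absorbs the level filtration on $M_{fr}(S^0)$. Consequently the stable homotopy groups of $\mathsf{S}_k$ are also $\cc S[1/e]$, and $\alpha$ realizes this identification.

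Part (4) is the main obstacle. The map $\beta_n$ is induced by the canonical transformation $T^n \wedge N_n \to C_*\Fr(T^n \wedge N_n)$ viewing a motivic space as its level-zero framed correspondence. I would first reduce to $N = \Sigma^\infty_{S^1} X$ for a pointed simplicial set $X$, by writing an arbitrary symmetric $S^1$-spectrum as a homotopy colimit of shifted suspension spectra and using that both $C_*\Fr_*$ and $M_{fr}^\Sigma$ preserve cofiber sequences and filtered colimits on underlying spectra. For suspension spectra, combining Lemma~\ref{mfrsymm} with the main framed motive computation~\cite[Theorem~4.1]{GP1} identifies both $C_*\Fr_*(\Sigma^\infty X)$ and $M_{fr}^\Sigma(\Sigma^\infty X)$ with $M_{fr}(X)$ up to $\pi_*$-iso, under which $\beta$ becomes the identification. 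The delicate step, where perfectness of $k$ is used, is the compatibility of $\beta$ with the structure maps $\upsilon_n$; this follows from exactness of $\Hom_{\cc M}(\bb P^{\wedge n}, -)$ together with the motivic local properties of $C_*\Fr(Y)$ guaranteed by the framed motives machinery over perfect base fields.
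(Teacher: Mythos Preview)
Your treatment of (1) and (2) is fine and matches the paper. The problem is your argument for (3), and consequently for (4).

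For (3) you invoke Theorem~\ref{aaa}, but that result is stated only for \emph{algebraically closed} fields, whereas Proposition~\ref{frmodule}(3) is asserted for an arbitrary \emph{perfect} field. Worse, your conclusion that ``the stable homotopy groups of $\mathsf{S}_k$ are also $\cc S[1/e]$'' is false in this generality: by the corollary immediately following the proposition (and Morel's theorem), $\pi_0(\mathsf{S}_k)=K_0^{MW}(k)$, which is not $\pi_0(\cc S)=\bb Z$ for a general perfect $k$. So the strategy of computing both sides separately and matching them against $\cc S[1/e]$ cannot work. The paper's proof never compares either spectrum with $\cc S$: instead it introduces auxiliary spectra $A=(A_r)$ and $B=(B_r)$ with $A_r=\colim_n C_*(\bb P^{\wedge n},T^n\wedge S^r)$ and $B_r=\colim_{m,n}C_*(\bb P^{\wedge(m+n)},T^{m+n}\wedge S^r)$, observes that $\Theta^\infty_{S^1}(A)=\Theta^\infty_{S^1}(\mathsf{S}_k)$ and $\Theta^\infty_{S^1}(B)=\Theta^\infty_{S^1}(M_{fr}^\Sigma(S^0))$, and reduces to showing the comparison map $c:A\to B$ is a levelwise equivalence in positive degrees. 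That in turn is the statement that
\[
C_*\Fr(S^r)\to\Hom_{\cc M}(\bb P^{\wedge m},C_*\Fr(T^m\wedge S^r))
\]
is a weak equivalence for $r>0$, which is the content of (the proof of) \cite[Lemma~4.12]{GN} and is where perfectness enters.

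Your plan for (4) is also more elaborate than needed. Once (3) is proved by the telescope argument above, (4) follows by literally replacing $S^r$ by $N_r$ throughout; no reduction to suspension spectra, no cofiber sequence or filtered colimit bookkeeping, and no appeal to \cite[Theorem~4.1]{GP1} is required. The ``delicate step'' you flag---compatibility of $\beta$ with structure maps---is not where the difficulty lies; the actual input is again the equivalence $C_*\Fr(N_r)\to\Hom_{\cc M}(\bb P^{\wedge m},C_*\Fr(T^m\wedge N_r))$ for $r>0$.
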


\begin{proof}
(1). The desired pairing
   $$(C_*\Fr_*^\Sigma(N)\wedge\mathsf{S}_k)_m=\bigvee_{q+p=m}(\Sigma_{q+p})_+
     \wedge_{\Sigma_q\times\Sigma_p}C_*\Fr_q(N_q)\wedge\Fr_p(\Delta^\bullet,S^p)\to C_*\Fr_m(N_m)$$
is defined as follows. Given two morphisms
$(\beta:\Delta^\ell_+\wedge\bb P^{\wedge q} \to T^q\wedge
N_q)\in\Fr_q(\Delta^\ell,N_q)$ and $(\alpha:\Delta^\ell_+\wedge\bb
P^{\wedge p} \to T^p\wedge S^p)\in\Fr_p(\Delta^\ell,S^p)$, define a
morphism $\beta\star\alpha\in\Fr(\Delta^\ell,N_{q+p})$ as the
composite
   \begin{multline*}
    (\Delta^\ell)_+\wedge\bb P^{\wedge (q+p)}\xrightarrow{diag\wedge\id}
    (\Delta^\ell\times\Delta^\ell)_+\wedge\bb P^{\wedge (q+p)}\cong
    (\Delta^\ell_+\wedge\bb P^{\wedge q})\wedge(\Delta^\ell_+\wedge\bb P^{\wedge p})\xrightarrow{\beta\wedge\alpha} \\
    (T^q\wedge N_q)\wedge(T^p\wedge S^p)
    \cong T^{q+p}\wedge N_q\wedge S^p\to T^{q+p}\wedge N_{q+p}.
   \end{multline*}
It is straightforward to see that this pairing is
$\Sigma_q\times\Sigma_p$-equivariant, satisfies associativity and
unit conditions, hence it defines the structure of a right $\mathsf{
S}_{k}$-module on $C_*\Fr_*^\Sigma(N)$. For the same reasons,
${M}^\Sigma_{fr}(N)$ is a right $\mathsf{S}_{k}$-module.

(2). This is straightforward.

(3). Let $\Theta^\infty_{S^1}$ be the naive stabilisation functor of
$S^1$-spectra. It has the property that $X\to\Theta^\infty_{S^1}(X)$
is a stable equivalence for every $S^1$-spectrum
$X$~\cite[Proposition~4.7]{H}.

Let $A_{n,r}:=C_*(\bb P^{\wedge n},T^n\wedge S^r)$,
$B_{m,n,r}:=C_*(\bb P^{\wedge(m+n)},T^{m+n}\wedge S^r)$, $m,n,r\geq
0$. We have maps of spaces
   $$A_{n,r}\xrightarrow{\sigma}A_{n+1,r}\quad\textrm{and}\quad
     B_{m,n,r}\xrightarrow{\sigma}B_{m+1,n,r}\xrightarrow{\sigma}B_{m+1,n+1,r}.$$
Let $A_r:=\colim_{n\geq r} A_{n,r}$, $B_{m,r}:=\colim_{n}B_{m,n,r}$,
$B_r:=\colim_{m,n}B_{m,n,r}$. The spaces $A_r$, $B_r$ constitute
right $S^1$-spectra $A$ and $B$. 
Each structure map
$A_r\to\uhom(S^1,A_{r+1})$ is the composite map determined by the following commutative diagram
   $$\xymatrix{A_r:&(\bb P^{\wedge r},T^r\wedge S^{r})\ar[d]_\sigma\ar[r]^{\sigma}
                             &(\bb P^{\wedge(r+1)},T^{r+1}\wedge S^{r})\ar[d]_\sigma\ar[r]^(.7){\sigma}&\cdots\\
                             &(\bb P^{\wedge (r+1)},T^{r+1}\wedge S^r)\ar[d]_{-\wedge S^1}\ar[r]^{\sigma}
                             &(\bb P^{\wedge(r+2)},T^{r+2}\wedge S^r)\ar[d]_{-\wedge S^1}\ar[r]^(.7){\sigma}&\cdots\\
   \uhom(S^1,A_{r+1}):&(S^1,(\bb P^{\wedge (r+1)},T^{r+1}\wedge S^{r+1}))\ar[r]^{(\sigma,S^1)}
   &(S^1,(\bb P^{\wedge(r+2)},T^{r+2}\wedge S^{r+1}))\ar[r]^(.8){(\sigma,S^1)}&\cdots}$$
(we omit $C_*$ here and below for brevity). Each structure map $B_r\to\uhom(S^1,B_{r+1})$ is defined 
in a similar fashion.

There is a commutative diagram of
spectra
   $$\xymatrix{{\mathsf{S}_{k}}\ar@/^1pc/[rr]^{j_{\mathsf{S}_{k}}}\ar[d]_\alpha\ar[r]_a
                &A\ar[d]_c\ar[r]_{j_A}&\Theta^\infty_{S^1}(A)\ar[d]^{\Theta^\infty_{S^1}(c)}\\
                M_{fr}^\Sigma(S^0)\ar@/_1pc/[rr]_{j_{M_{fr}^\Sigma(S^0)}}\ar[r]^b&B\ar[r]^{j_B}&\Theta^\infty_{S^1}(B)}$$
in which
$\Theta^\infty_{S^1}(A)=\Theta^\infty_{S^1}({\mathsf{S}_{k}})$,
$\Theta^\infty_{S^1}(B)=\Theta^\infty_{S^1}(M_{fr}^\Sigma(S^0))$.
The maps $a,b,c$ are defined in a canonical way. By the
two-out-of-three property $a,b$ are stable equivalences. Therefore
$\alpha$ is a stable equivalence if and only if $c$ is.

But $c$ is the infinite composition
   $$A_r\cong B_{0,r}\to B_{1,r}\to B_{2,r}\to\cdots B_r.$$
Each composition $A_r\to B_{m,r}$ is isomorphic to the canonical map
of spaces
   $$C_*\Fr(S^r)\to\Hom_{\cc M}(\bb P^{\wedge m},C_*\Fr(T^m\otimes S^r)).$$
Repeating the proof of~\cite[Lemma~4.12]{GN} word for word, the map
is a weak equivalence for positive $r$, and hence $c$ is a weak
equivalence in positive degrees.

(4). The proof literally repeats that of (3) if we replace $S^r$ by
$N_r$.
\end{proof}

Recall that we distinguish classical (also called naive) stable
homotopy groups $\pi_*(N)$ of $N\in Sp^\Sigma_{S^1}$ and ``true" stable
homotopy groups (denote them by $\underline{\pi}_*(N)$ --- see,
e.g., \cite{Sch} for details). They coincide for semistable
symmetric spectra.

\begin{cor}
Given a perfect field $k$, the symmetric spectrum $\mathsf{S}_{k}$
is semistable and
$\pi_0(\mathsf{S}_{k})=\underline{\pi}_0(\mathsf{S}_{k})=K_0^{MW}(k)$,
where $K_0^{MW}(k)$ is the Milnor-Witt group of $k$.
\end{cor}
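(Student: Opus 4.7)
The plan is to reduce the corollary to the framed-motives computation of $\pi_0$ of the framed motive of a point, using Proposition~\ref{frmodule}(3) and Lemma~\ref{mfrsymm} as the bridge. Proposition~\ref{frmodule}(3) gives a $\pi_*$-isomorphism $\alpha\colon\mathsf{S}_k\to M^\Sigma_{fr}(S^0)$ of underlying $S^1$-spectra over the perfect field $k$, and Lemma~\ref{mfrsymm} applied with $X=S^0$ yields a $\pi_*$-isomorphism (in fact a positive level equivalence) $M_{fr}(S^0)\to M^\Sigma_{fr}(S^0)$. Chaining these one obtains canonical isomorphisms
$$\pi_n(\mathsf{S}_k)\;\cong\;\pi_n\bigl(M^\Sigma_{fr}(S^0)\bigr)\;\cong\;\pi_n\bigl(M_{fr}(S^0)(\pt)\bigr),\quad n\in\mathbb{Z}.$$

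For semistability I would invoke the following general principle: if $f\colon X\to Y$ is a $\pi_*$-isomorphism of symmetric $S^1$-spectra and $Y$ is semistable, then $X$ is semistable, because $f$ is automatically a stable equivalence and hence $\underline{\pi}_*(X)\cong\underline{\pi}_*(Y)=\pi_*(Y)\cong\pi_*(X)$. Applied with $f=\alpha$, it suffices to show $M^\Sigma_{fr}(S^0)$ is semistable. By Lemma~\ref{mfrsymm} and the same principle, it suffices that the underlying non-symmetric spectrum $M_{fr}(S^0)(\pt)$ be a (positive) $\Omega$-spectrum; but this is precisely the content of the Garkusha--Panin fibrancy theorem \cite{GP1} (cancellation plus strict $\mathbb{A}^1$-invariance over a perfect field). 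Every positive $\Omega$-spectrum is semistable, so the transfer yields $\pi_*(\mathsf{S}_k)=\underline{\pi}_*(\mathsf{S}_k)$.

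Finally, for the identification $\pi_0(M_{fr}(S^0)(\pt))\cong K_0^{MW}(k)$, I would appeal to Neshitov's theorem: the $0$-th homology of the complex of linear framed correspondences $\mathbb{Z}F_*(\pt,\pt)$ is the Grothendieck--Witt ring $GW(k)\cong K_0^{MW}(k)$ over a perfect field. Combined with the previous paragraphs this yields $\pi_0(\mathsf{S}_k)=\underline{\pi}_0(\mathsf{S}_k)=K_0^{MW}(k)$. The main obstacle is the semistability step: while the $\pi_*$-isomorphism $\alpha$ is directly available from Proposition~\ref{frmodule}, semistability is a property of the \emph{symmetric} structure, and transferring it to $\mathsf{S}_k$ crucially uses the positive $\Omega$-spectrum property of $M_{fr}(S^0)(\pt)$ — a substantial external input from the framed-motives programme. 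Given that input together with Neshitov's $\pi_0$-computation, the remainder of the argument is formal.
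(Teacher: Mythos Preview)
Your reduction is the same as the paper's: both use Proposition~\ref{frmodule}(3) and Lemma~\ref{mfrsymm} to transfer the question from $\mathsf{S}_k$ to the framed motive $M_{fr}(\pt)(\pt)$. The differences lie only in the external inputs. For semistability the paper observes that $M_{fr}(\pt)$ is the Segal symmetric spectrum of a special $\Gamma$-space and quotes \cite[Example~4.2]{Sch08}, whereas you invoke the positive $\Omega$-spectrum property from~\cite{GP1}; both are valid and roughly equivalent in depth. For $\pi_0$ the paper combines \cite[Corollary~11.2]{GP1} with Morel's theorem $\pi_{0,0}^{\bb A^1}(\cc S_k)=K_0^{MW}(k)$, while you cite Neshitov's direct computation $\pi_0(M_{fr}(\pt)(\pt))\cong GW(k)$; again both routes land at the same place.

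One small logical wrinkle in your semistability transfer: the sentence ``$f$ is automatically a stable equivalence and hence $\underline{\pi}_*(X)\cong\underline{\pi}_*(Y)$'' is circular as written, since knowing a $\pi_*$-isomorphism into a semistable target is a stable equivalence already presupposes the source is semistable. The clean argument is that semistability is characterised by triviality of the injection-monoid action on naive $\pi_*$ \cite{Sch08}, and this action is natural in maps of symmetric spectra; hence a $\pi_*$-isomorphism transports semistability from target to source. Once both are semistable, the map is then a stable equivalence. With that correction your argument is complete and essentially parallel to the paper's.
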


\begin{proof}
By~\cite[Corollary~11.2]{GP1} $\pi_{0,0}(\Sigma^\infty_{S^1}\Sigma^\infty_{\bb
G_m}S^0)=\pi_0(M_{fr}(\pt))$. By a theorem of Morel~\cite{Mor1} one
has $\pi_{0,0}(\Sigma^\infty_{S^1}\Sigma^\infty_{\bb
G_m}S^0)=K_0^{MW}(k)$. By definition~\cite{GP1}, $M_{fr}(\pt)$ is
the Segal symmetric spectrum associated with a special
$\Gamma$-space. Therefore $M_{fr}(\pt)$ is semistable
by~\cite[Example~4.2]{Sch08}. Our statement now follows from
Lemma~\ref{mfrsymm} and Proposition~\ref{frmodule}(3).
\end{proof}

By $e^{-1}$-stable equivalences (respectively
$\pi_*[e^{-1}]$-isomorphisms) we mean maps of symmetric spectra
(respectively ordinary spectra) inducing isomorphisms in
$SH[e^{-1}]$ (respectively isomorphisms of stable homotopy groups
with $e^{-1}$-coefficients).

\begin{thm}\label{compos}
If $\bar k$ is an algebraically closed field of exponential
characteristic $e$, then the natural maps of symmetric $S^1$-spectra
   $$\nu:N\xrightarrow{}C_*\Fr_*^\Sigma(N),\quad \beta\circ\nu:N\xrightarrow{}M^\Sigma_{fr}(N)$$
are $\pi_*[e^{-1}]$-isomorphisms, where
$\beta:C_*\Fr_*^\Sigma(N)\xrightarrow{}M^\Sigma_{fr}(N)$ is a canonical
map. In particular, a map $\gamma:N\to N'$ of symmetric
$S^1$-spectra is an $e^{-1}$-stable equivalence or a
$\pi_*[e^{-1}]$-isomorphism if and only if ${M}^\Sigma_{fr}(\gamma)$
is.
\end{thm}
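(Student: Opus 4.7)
The plan is to (i) reduce the claim about $\beta\circ\nu$ to the one about $\nu$, (ii) verify the base case $N=\cc S$ using Theorem~\ref{aaa}, and then (iii) propagate from $\cc S$ to an arbitrary $N$ by a formal cell-induction argument. The ``in particular'' clause is then automatic: once the natural transformation $\beta\circ\nu:\id\to M^\Sigma_{fr}$ is a $\pi_*[e^{-1}]$-iso on every object (hence, being compatible with cofibre sequences, an $e^{-1}$-stable equivalence on every object), 2-out-of-three in the naturality square of $\gamma:N\to N'$ yields both equivalences.

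For step (i), since $\bar k$ is perfect, Proposition~\ref{frmodule}(4) gives that $\beta$ is a $\pi_*$-isomorphism for every $N$, so the claims for $\nu$ and $\beta\circ\nu$ are equivalent, and it suffices to show $\nu_N$ is a $\pi_*[e^{-1}]$-iso.

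For the base case $N=\cc S=\Sigma^\infty_{S^1}S^0$, I would first observe by direct comparison of definitions that $C_*\Fr_*(\cc S)$ coincides with $\mathsf{S}_{\bar k}$ and that $\nu_{\cc S}$ is the canonical map $\cc S\to\mathsf{S}_{\bar k}$. I would then fit this map into the diagram
\begin{equation*}
\cc S\xrightarrow{\nu_{\cc S}}\mathsf{S}_{\bar k}\xrightarrow{\alpha}M^\Sigma_{fr}(S^0)\xleftarrow{}M_{fr}(S^0),
\end{equation*}
where $\alpha$ is a $\pi_*$-iso by Proposition~\ref{frmodule}(3), the rightmost map is a $\pi_*$-iso by Lemma~\ref{mfrsymm}, and the composite $\cc S\to M_{fr}(S^0)$ becomes a stable equivalence after inverting $e$ by Theorem~\ref{aaa}. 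A 2-out-of-three argument in $\pi_*[e^{-1}]$ then identifies $\nu_{\cc S}$ as a $\pi_*[e^{-1}]$-iso.

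Finally, I would extend from $\cc S$ to an arbitrary $N$ by the formal principle that the class of $N\in Sp^\Sigma_{S^1}$ on which $\nu_N$ is a $\pi_*[e^{-1}]$-iso contains $\cc S$ and is closed under shifts, homotopy cofibre sequences and filtered homotopy colimits; since $\cc S[1/e]$ is a compact generator of $SH[1/e]$, this is enough to conclude. The hard part will be verifying these closure properties for the functor $C_*\Fr_*$: one must check that the levelwise assignment $N_n\mapsto\Hom_{\cc M}((\Delta^\bullet_k)_+\wedge\bb P^{\wedge n},T^n\wedge N_n)$, together with the induced structure maps, sends level cofibrations and filtered colimits of symmetric $S^1$-spectra to homotopy cofibre sequences and filtered homotopy colimits in $SH[1/e]$. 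This is the technical heart of the argument and will require connectivity estimates in the same spirit as the~\cite{GN} arguments underlying Lemma~\ref{mfrsymm} and Proposition~\ref{frmodule}(3).
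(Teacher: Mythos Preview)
Your reduction via Proposition~\ref{frmodule}(4) and the base case for $N=\cc S$ are fine. The gap is in step~(iii). To invoke ``$\cc S[1/e]$ compactly generates $SH[1/e]$'' you need the class $\{N:\nu_N\text{ is a }\pi_*[e^{-1}]\text{-iso}\}$ to descend to a localizing subcategory of $SH[1/e]$, and in particular to be invariant under $e^{-1}$-stable equivalences. But that invariance is precisely the ``in particular'' clause you are trying to deduce, so the argument is circular as stated. Working in $Sp^\Sigma_{S^1}$ at the point-set level does not help either: building arbitrary cofibrant spectra by cell attachment requires all free spectra $F_nK$ as base cases, not just $\cc S=F_0S^0$, and there is no evident closure under desuspension for $C_*\Fr_*$. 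Incidentally, the ``hard part'' you flag---closure under cofibre sequences---is actually easy once one notices $C_*\Fr_*(N)_n\cong\Fr_n(\Delta^\bullet_k,\pt)\wedge N_n$ (cf.\ the remark after Definition~\ref{galoissphere}); but this observation does not repair the circularity, and no connectivity estimates are needed for it.

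The paper avoids cell-induction altogether. It first replaces $C_*\Fr_*(N)$ by the non-symmetric framed motive $M_{fr}(N)$ (with $n$th space $C_*\Fr(N_n)$) via a canonical $\pi_*$-isomorphism $a$, reducing to the map $\kappa_N:N\to M_{fr}(N)$. It then proves $\kappa_X:\Sigma^\infty_{S^1}X\to M_{fr}(X)$ is a $\pi_*[e^{-1}]$-iso for every pointed simplicial set $X$: first for finite pointed sets by Theorem~\ref{aaa}, then for arbitrary pointed sets by a filtered colimit, then for simplicial sets by geometric realisation---each step manifestly preserving $\pi_*[e^{-1}]$-isos. Finally, an arbitrary $N$ is handled by the layer filtration $N=\colim_iL_i(N)$ with $L_i(N)=(N_0,\ldots,N_{i-1},N_i,N_i\wedge S^1,N_i\wedge S^2,\ldots)$: each $L_i(N)$ agrees with a shifted suspension spectrum from level $i$ on, so $\kappa_{L_i(N)}$ is a $\pi_*[e^{-1}]$-iso by the simplicial-set case, and $\kappa_N=\colim_i\kappa_{L_i(N)}$. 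No closure under cofibre sequences or stable equivalences is ever invoked.
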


\begin{proof}
The map $\beta$ is a $\pi_*$-isomorphism by
Proposition~\ref{frmodule}(4). Consider a commutative diagram
   $$\xymatrix{N\ar[dr]_{\kappa_N}\ar[r]^(0.4)\nu&C_*\Fr_*^\Sigma(N)\ar[d]_a\ar[dr]^{j_{C_*\Fr_*(N)}}\\
               &M_{fr}(N)\ar[r]_{j_{M_{fr}(N)}}&\Theta^\infty_{S^1} M_{fr}(N)}$$
We see that $a$ is a $\pi_*$-isomorphism. Therefore $\nu$ is a
$\pi_*[e^{-1}]$-isomorphism if and only if $\kappa_N$ is.

We claim that the map of $S^1$-spectra
$\varkappa_X:\Sigma_{S^1}^\infty X\to {M}_{fr}(X)$ is a
$\pi_*[e^{-1}]$-isomorphism. Indeed, if $X$ is a finite pointed set
regarded as a constant simplicial pointed set, this follows from
Theorem~\ref{aaa}. If $X$ is any pointed set, then $\varkappa_X$ is
a directed colimit of maps $\varkappa_W$, where $W$ runs over finite
pointed sets of $X$. Hence $\varkappa_X$ is a
$\pi_*[e^{-1}]$-isomorphism as directed colimits preserve
$\pi_*[e^{-1}]$-isomorphisms. Finally, since the geometric
realization of a simplicial $\pi_*[e^{-1}]$-isomorphism is an
$\pi_*[e^{-1}]$-isomorphism, then so is $\kappa_X$ for an arbitrary
pointed simplicial set $X$ as claimed.

Next, every (ordinary) spectrum $N\in Sp_{S^1}$ equals
$\colim_iL_i(N)$, where each spectrum
$L_i(N)=(N_0,\ldots,N_{i-1},N_i,N_i\wedge S^1,N_i\wedge
S^2,\ldots\ldots)$. Then $\varkappa_N:N\to {M}_{fr}(N)$ equals
$\kappa_N=\colim_i\kappa_{L_i(N)}$. It is a
$\pi_*[e^{-1}]$-isomorphism as each $\kappa_{L_i(N)}$ is (this
follows from the previous claim about $\kappa_X$). Therefore the
natural map of $S^1$-spectra $\kappa_N:N\to {M}_{fr}(N)$ is a
$\pi_*[e^{-1}]$-isomorphism for all $N\in Sp_{S^1}$.
\end{proof}

We are now in a position to prove the main result of this section
saying that the stable homotopy category of classical symmetric
spectra can be recovered from the stable homotopy category of framed
spectra over an algebraically closed field (after inverting the
exponential characteristic).

\begin{thm}\label{recover}
Suppose $k=\bar k$ is an algebraically closed field of exponential
characteristic $e$. The Quillen pair $(L,U)$~\eqref{adjoint} is a
Quillen equivalence. In particular, it induces an equivalence of
compactly generated triangulated categories
   $$L:SH[e^{-1}]\rightleftarrows SH_k[e^{-1}]:U.$$
Moreover, the equivalence $L$ is isomorphic to the functor
   $$M_{fr}^\Sigma:SH[e^{-1}]\xrightarrow{\sim} SH_k[e^{-1}]$$
that takes a symmetric $S^1$-spectrum $N$ to its symmetric framed
motive $M_{fr}^\Sigma(N)$.
\end{thm}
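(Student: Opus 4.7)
The plan is to prove the Quillen equivalence between the $e^{-1}$-localized model structures on $Sp^\Sigma_{S^1}$ and $\Mod \mathsf{S}_k$, and then read off the remaining assertions. Since the model structure on $\Mod \mathsf{S}_k$ is defined so that $U$ preserves and reflects stable equivalences, $(L,U)$ is a Quillen equivalence (after inverting $e$) if and only if the underived unit $\eta_N : N \to UL(N) = N \wedge_{\cc S} \mathsf{S}_k$ is a $\pi_*[e^{-1}]$-isomorphism for every cofibrant $N$. I would reduce this to the case $N = \cc S$ by invoking the Schwede--Shipley extension-of-scalars machinery of~\cite{SS}: an $e^{-1}$-stable equivalence between (commutative) symmetric ring spectra induces a Quillen equivalence between the corresponding module categories.

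The critical step is therefore to show that the natural ring map $\delta : \cc S \to \mathsf{S}_k$ used in the definition of the adjoint pair~\eqref{adjoint} is a $\pi_*[e^{-1}]$-iso. I would factor $\delta$ through $M_{fr}^\Sigma(S^0)$. By Proposition~\ref{frmodule}(3), the canonical map $\alpha : \mathsf{S}_k \to M_{fr}^\Sigma(S^0)$ is a $\pi_*$-iso, because an algebraically closed field is perfect. By Theorem~\ref{compos} applied with $N = \cc S$, the natural map $\beta \circ \nu : \cc S \to M_{fr}^\Sigma(S^0)$ is a $\pi_*[e^{-1}]$-iso. A short naturality check identifies this composite with $\alpha \circ \delta$, whence two-out-of-three forces $\delta$ to be a $\pi_*[e^{-1}]$-iso, as needed.

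Compact generation of $SH_k[e^{-1}]$ by $\mathsf{S}_k[e^{-1}] = L(\cc S)[e^{-1}]$ then follows formally from compact generation of $SH[e^{-1}]$ by $\cc S[e^{-1}]$ together with the Quillen equivalence. For the identification $L \cong M_{fr}^\Sigma$: since $M_{fr}^\Sigma(N)$ is a right $\mathsf{S}_k$-module by Proposition~\ref{frmodule}(1), the $\pi_*[e^{-1}]$-iso $\beta \circ \nu : N \to M_{fr}^\Sigma(N)$ of Theorem~\ref{compos} factors by adjunction through a unique $\mathsf{S}_k$-module map $\rho_N : L(N) \to M_{fr}^\Sigma(N)$. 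Applying $U$ yields a factorization $N \xrightarrow{\eta_N} UL(N) \xrightarrow{U(\rho_N)} M_{fr}^\Sigma(N)$ of $\beta\circ\nu$; since both $\eta_N$ (from the Quillen-equivalence step) and the composite (from Theorem~\ref{compos}) are $\pi_*[e^{-1}]$-isos, so is $U(\rho_N)$, and hence $\rho_N$ is an $e^{-1}$-stable equivalence in $\Mod \mathsf{S}_k$. This provides the desired natural isomorphism $L \cong M_{fr}^\Sigma$ on $SH[e^{-1}] \to SH_k[e^{-1}]$.

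The main obstacle I anticipate is verifying that the Schwede--Shipley scalar-extension Quillen equivalence propagates cleanly through $e^{-1}$-localization --- namely, that the localized model structures remain cofibrantly generated and satisfy the monoid axiom, and that $\mathsf{S}_k$ is sufficiently flat for smashing with it to preserve the relevant stable equivalences. If any of this fails to apply off-the-shelf, a fallback is a direct cell-induction argument: the class of cofibrant $N$ for which $\eta_N$ is a $\pi_*[e^{-1}]$-iso contains $\cc S$, is closed under triangles, coproducts, suspensions, and retracts, and therefore exhausts all cofibrant objects because $SH[e^{-1}]$ is compactly generated by the shifts of $\cc S[e^{-1}]$.
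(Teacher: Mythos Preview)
Your proposal is correct and follows essentially the same strategy as the paper: reduce to the unit at the sphere, factor $\cc S \to M_{fr}^\Sigma(S^0)$ through $\mathsf{S}_k$ using Proposition~\ref{frmodule}(3) and Theorem~\ref{compos}, and use compact generation to globalize. Two packaging differences are worth noting. First, the paper avoids your Schwede--Shipley/localization worry entirely by working at the triangulated level with \cite[Lemma~4.8]{GJ}: since $L(\cc S)=\mathsf{S}_k$ is a compact generator of $SH_k[e^{-1}]$ and the derived unit at $\cc S$ is an $e^{-1}$-equivalence (the paper computes the fibrant replacement as $\mathsf{S}_k^f=\Omega_{S^1}M_{fr}^\Sigma(S^1)$ via \cite[Theorem~4.1]{GP1}), the equivalence of homotopy categories follows directly, with no flatness or monoid-axiom checks needed. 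Second, for $L\cong M_{fr}^\Sigma$ the paper argues dually to your adjunction trick: it shows $M_{fr}^\Sigma$ is an equivalence independently (again via \cite[Lemma~4.8]{GJ} on compact generators), then uses Theorem~\ref{compos} to get $\id\cong U\circ M_{fr}^\Sigma$, and concludes $L\cong M_{fr}^\Sigma$ by uniqueness of quasi-inverses to $U$. Your direct construction of $\rho_N:L(N)\to M_{fr}^\Sigma(N)$ via the adjunction is equally valid and arguably more transparent.
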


\begin{proof}
$SH=\Ho(Sp^\Sigma_{S^1})$ is compactly generated by the sphere
spectrum $\cc S$. $SH_k$ is compactly generated by the framed sphere
spectrum ${\mathsf{ S}_{k}}$. By construction, $L(\cc
S)={\mathsf{S}_{k}}$. Therefore our statement that $(L,U)$ is a
Quillen equivalence reduces to showing that the composite map of
$S^1$-spectra is a $e^{-1}$-stable equivalence
   $$\phi:\cc S\to UL(\cc S)=U({\mathsf{S}_{k}})\to U({\mathsf{S}_{k}^f}),$$
where ${\mathsf{S}_{k}^f}$ is a fibrant replacement of
${\mathsf{S}_{k}}$ in $\Mod{\mathsf{ S}_{k}}$ (we also
use Lemma~\ref{equ} here).

We claim that ${\mathsf{ S}^f_{k}}=\Omega_{S^1}M_{fr}^\Sigma(S^1)$.
Indeed, the canonical map $\alpha:{\mathsf{S}_{k}}\to
M_{fr}^\Sigma(S^0)$ in $\Mod{\mathsf{ S}_{k}}$ is a
$\pi_*$-isomorphism by Proposition~\ref{frmodule}(3). It follows
from~\cite[Theorem~4.1]{GP1} that $M_{fr}^\Sigma(S^0)$ is a
positively fibrant symmetric $\Omega$-spectrum, and hence
$\Omega_{S^1}M_{fr}^\Sigma(S^1)$ is an $\Omega$-spectrum. It follows
that ${\mathsf{S}^f_{k}}=\Omega_{S^1}M_{fr}(S^1)$.
Theorem~\ref{compos} implies that $\phi$ is a
$\pi_*[e^{-1}]$-isomorphism.

Next, Theorem~\ref{compos} implies that $N\mapsto M_{fr}^\Sigma(N)$
induces a functor
   $$M_{fr}^\Sigma:SH[e^{-1}]\xrightarrow{} SH_k[e^{-1}].$$
We have that $M_{fr}^\Sigma(\cc S)\cong \mathsf{S}_{k}$ is a compact
generator. The first part of the proof implies that
   $$SH[e^{-1}](\cc S[*],\cc S)\to SH_k[e^{-1}](M^\Sigma_{fr}(\cc S)[*],M^\Sigma_{fr}(\cc S))$$
is an isomorphism of graded Abelian groups, hence $M_{fr}^\Sigma$ is
an equivalence of compactly generated triangulated categories
by Lemma~\ref{equ}. By Theorem~\ref{compos} the canonical map
of $S^1$-spectra $N\to M_{fr}^\Sigma(N)$ is a
$\pi_*[e^{-1}]$-isomorphism for any $N\in Sp_{S^1}^\Sigma$.
Therefore $\id\to U\circ M_{fr}^\Sigma$ is an isomorphism of
functors. Composing it with $(M^\Sigma_{fr})^{-1}$, where
$(M^\Sigma_{fr})^{-1}$ is a quasi-inverse functor to
$M^\Sigma_{fr}$, we get an isomorphism of functors
$(M^\Sigma_{fr})^{-1}\simeq U$. By the first part of the proof $L$
is a quasi-inverse functor to $U$, and hence $L$ is isomorphic to
the functor $M^\Sigma_{fr}$, as was to be shown.
\end{proof}

\section{Spectral categories associated to symmetric ring spectra}\label{spectral}

In what follows by a {\it spectral category\/} we mean a category enriched over the
closed symmetric monoidal category of symmetric $S^1$-spectra $Sp^\Sigma_{S^1}$.

Recall from~\cite[Construction~5.6]{Sch} that for every pair of symmetric spectra $X, Y$
a morphism $X\wedge Y\to Z$ to a symmetric spectrum $Z$ is the same as giving a
bimorphism $b:(X,Y)\to Z$. We define a {\it bimorphism $b : (X, Y )\to Z$}
as a collection of $\Sigma_p\times\Sigma_q$-equivariant maps of pointed simplicial sets
$b_{p,q}:X_p\wedge Y_q\to Z_{p+q}$ for $p,q\geq 0$, such that the ``bilinearity diagram"
commutes for all $p,q\geq 0$:
   \begin{equation}\label{bimorphism}
     \xymatrix{&X_p\wedge Y_q\wedge S^1\ar[d]^{b_{p,q}\wedge S^1}\ar[rr]^{X_p\wedge twist}\ar[dl]_{X_p\wedge\sigma_q}
                      &&X^p\wedge S^1\wedge Y_q\ar[d]^{\sigma_p\wedge Y_q}\\
                      X_p\wedge Y_{q+1}\ar[dr]_{b_{p,q+1}}&Z_{p+q}\wedge S^1\ar[d]^{\sigma_{p+q}}&&X_{p+1}\wedge Y_q\ar[d]^{b_{p+1,q}}\\
                      &Z_{p+q+1}&&Z_{p+1+q}\ar[ll]^{1\times\chi_{1,q}}}
   \end{equation}

In this section $\cc C$ is the category from Section~\ref{graded}.

\begin{dfn}
Suppose $\cc B$ is a full subcategory of $\cc C$ closed under $\wedge$ and $\sigma:P\to T$ is a morphism in $\cc C$.
Let $E$ be a symmetric ring $T$-spectrum in
$Sp^\Sigma(\cc C,T)$. We define the {\it symmetric $S^1$-spectrum of $(E,\sigma)$-correspondences
$\corr^{E,\sigma}_*(X,Y)$\/} between two objects $X,Y\in\cc B$ as follows.
First, let
   $$\corr^{E,\sigma}_n(X,Y):=\Hom_{\cc C}(X\wedge P^{\wedge n},Y\wedge E_n\wedge S^n).$$
This simplicial set is pointed at the zeroth map. 
By definition, $\corr^{E,\sigma}_0(X,Y):=\Hom_{\cc C}(X,Y\wedge E_0)$.
Similarly to~\eqref{sigman} each $\corr^{E,\sigma}_n(X,Y)$ is a $\Sigma_n$-simplicial set. 
The left action of $\Sigma_n$ on
$\corr^{E,\sigma}_n(X,Y)$ is given by conjugation: for each $f:X\wedge P^{\wedge n}\to Y\wedge E_n\wedge S^n$
and each $\tau\in\Sigma_n$ the morphism $\tau\cdot f$ is defined as
the composition
   \begin{equation*}\label{sigmaseqq}
    X\wedge P^{\wedge n}\xrightarrow{X\wedge\tau^{-1}}X\wedge P^{\wedge n}\xrightarrow{f}Y\wedge E_n\wedge S^n
       \xrightarrow{Y\wedge\tau\wedge\tau}Y\wedge E_n\wedge S^n.
   \end{equation*}

Second, repeating the proof of Theorem~\ref{symmsp}(1) word for word 
the morphism $\sigma$ induces natural $(\Sigma_n\times\Sigma_k)$-equivariant maps 
   $$\corr^{E,\sigma}_n(X,Y)\wedge S^k\to\corr^{E,\sigma}_{n+k}(X,Y),$$
so that 
   $$\corr^{E,\sigma}_*(X,Y):=(\corr^{E,\sigma}_0(X,Y),\corr^{E,\sigma}_1(X,Y),\corr^{E,\sigma}_2(X,Y),\ldots)$$
becomes a symmetric $S^1$-spectrum.
\end{dfn}

Define a pairing
   $$\phi_{X,Y,Z}^\sigma:\corr^{E,\sigma}_n(X,Y)\wedge\corr^{E,\sigma}_m(Y,Z)\to\corr^{E,\sigma}_{n+m}(X,Z)$$
by the rule: $\phi_{X,Y,Z}^\sigma(f:X\wedge P^{\wedge n}\to Y\wedge E_n\wedge S^n,g:Y\wedge P^{\wedge m}\to Z\wedge E_m\wedge S^m)$
is given by the composition
\begin{gather*}
X\wedge P^{\wedge n}\wedge P^{\wedge m}\xrightarrow{f\wedge P^{\wedge m}}
Y\wedge E_n\wedge S^n\wedge P^{\wedge m}
\xrightarrow{tw} Y\wedge P^{\wedge m}\wedge E_n\wedge S^n\xrightarrow{g\wedge E_n}\\
Z\wedge E_{m}\wedge S^m\wedge E_n\wedge S^n\xrightarrow{tw} Z\wedge
E_n\wedge E_m\wedge S^n\wedge S^m\xrightarrow{Z\wedge
\mu_{n,m}\wedge S^n\wedge S^m}Z\wedge E_{n+m}\wedge S^{n+m}.
\end{gather*}

\begin{thm}\label{tampa}
Let $E$ be a symmetric ring $T$-spectrum in $Sp^\Sigma(\cc C,T)$ and
$\cc B$ is a full subcategory of $\cc C$ closed under monoidal
product. Then $\cc B$ can be enriched over the closed symmetric
monoidal category of symmetric $S^1$-spectra $Sp_{S^1}^\Sigma$. Namely,
$Sp^\Sigma_{S^1}$-objects of morphisms are defined by the symmetric spectra $\corr^{E,\sigma}_*(X,Y)$ of $(E,\sigma)$-correspondences.
Compositions are defined by pairings $\phi_{X,Y,Z}$. The resulting
$Sp^\Sigma_{S^1}$-category is denoted by $\corr^{E,\sigma}_*(\cc B)$.
Moreover, the $Sp^\Sigma_{S^1}$-category $\corr^{E,\sigma}_*(\cc B)$ is
symmetric monoidal with the same monoidal product on objects as in
$\cc B$ whenever $E$ is a commutative ring $T$-spectrum.
\end{thm}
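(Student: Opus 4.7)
The plan is to combine the argument of Theorem~\ref{capitals} with the construction of symmetric $S^1$-spectra from Theorem~\ref{symmsp}. For each pair $X,Y\in\cc B$ the symmetric $S^1$-spectrum $\corr^{E,\sigma}_*(X,Y)$ will carry exactly the structure that $\mathbf 1^E$ carries, but with a copy of $X$ adorning the source and a copy of $Y$ adorning the target of each Hom-object; the composition $\phi^\sigma$ will be a bimorphism encoded by the multiplication of the ring object $E$ together with the appropriate twists.

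First I would verify that $(\corr^{E,\sigma}_n(X,Y))_{n\geq 0}$ is a symmetric $S^1$-spectrum. The $\Sigma_n$-action on $\Hom_{\cc C}(X\wedge P^{\wedge n}, Y\wedge E_n\wedge S^n)$ is the conjugation action defined verbatim as in~\eqref{sigman}, with $\Sigma_n$ acting on $P^{\wedge n}$ through the symmetry isomorphisms of $\cc C$ and diagonally on $E_n\wedge S^n$. The structure maps are defined using $\sigma:P\to T$ and $E_n\wedge T\to E_{n+1}$ precisely as in the construction of $\mathbf 1^E$, and the diagram chase verifying equivariancy is the same one used in Theorem~\ref{symmsp}(1). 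The identity morphism is $u_X^\sigma:X\cong X\wedge S\xrightarrow{X\wedge\iota_0}X\wedge E_0\in\corr^{E,\sigma}_0(X,X)$.

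Second I would check that $\phi^\sigma_{X,Y,Z}$ defines a bimorphism of symmetric $S^1$-spectra in the sense of~\eqref{bimorphism}. The $\Sigma_n\times\Sigma_m$-equivariancy follows from the same large diagram used in Theorem~\ref{capitals}, now with the $S^n\wedge S^m\cong S^{n+m}$ factors carried along diagonally. Associativity of $\phi^\sigma$ reduces to associativity of $\mu_{*,*}$ exactly as in Theorem~\ref{capitals}. The bilinearity diagram~\eqref{bimorphism} for $\phi^\sigma$ is where centrality of the symmetric ring $T$-spectrum $E$ enters: suspending in either variable produces either $E_n\wedge T\to E_{n+1}$ or $T\wedge E_n\to E_{1+n}$ after a $twist$, and the centrality square for $E$ combined with naturality of $twist$ is exactly what makes the two paths agree up to the shuffle $\chi_{1,q}$ appearing in~\eqref{bimorphism}.

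Finally, when $E$ is commutative I would construct the symmetric monoidal structure by defining a $Sp^\Sigma_{S^1}$-functor $\psi^\sigma:\corr^{E,\sigma}_*(\cc B)\wedge\corr^{E,\sigma}_*(\cc B)\to\corr^{E,\sigma}_*(\cc B)$ whose components send $(f,g)$ to the interleaved composite that first brings $X\wedge Y\wedge P^{\wedge p}\wedge P^{\wedge q}$ into the form $X\wedge P^{\wedge p}\wedge Y\wedge P^{\wedge q}$ by a twist, applies $f\wedge g$, and then uses $\mu_{p,q}$ together with the canonical identification $S^p\wedge S^q\cong S^{p+q}$. Compatibility with composition (with the shuffle $\wt\chi_{s,q}$) is the large commutative diagram at the end of the proof of Theorem~\ref{capitals} plus its sphere analog, which commutes by naturality. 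The main expected obstacle is the bookkeeping in the bilinearity diagram for $\phi^\sigma$: this is the one place where the symmetric-spectrum structure on source and target genuinely interacts with the ring structure on $E$, and the computation has to absorb both the shuffle $\chi_{1,q}$ from~\eqref{bimorphism} and the centrality axiom of $E$; once this is accounted for, the remainder is routine diagram chasing paralleling the proofs of Theorems~\ref{capitals} and~\ref{symmsp}.
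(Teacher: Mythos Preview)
Your proposal is correct and follows essentially the same approach as the paper's proof, which likewise reduces to combining the arguments of Theorems~\ref{capitals} and~\ref{symmsp} and then observing that the composition pairings $\phi^\sigma_{X,Y,Z}$ are bimorphisms in the sense of~\eqref{bimorphism}. Your explicit identification of the centrality axiom for $E$ as the input making the bilinearity diagram commute is a helpful elaboration of a point the paper leaves to the reader.
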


\begin{proof}
The identity morphism is defined by
   $$X\bl{\rho^{-1}}\cong X\wedge S\xrightarrow{\id_X\wedge\iota_{0}}X\wedge E_0\in\corr_0^E(X,X),$$
where $\iota_0:S\to E_0$ is the unit map. Our proof now literally
repeats that of Theorem~\ref{capitals}. The only thing one has to
care about is that the pairings occurring here are bimorphisms of
symmetric spectra. This is clearly the case if one chases over the
diagram~\eqref{bimorphism}. It is also worth noting that
$(E_0,E_1\wedge S^1,E_2\wedge S^2,\ldots)$ is a ring object in the
category of simplicial symmetric sequences in $\cc C$, which is
commutative whenever $E$ is.
\end{proof}

The proof of the theorem implies the following result.

\begin{cor}
Under the notation of Theorem~\ref{tampa}
any morphism of two symmetric ring $T$-spectra $\gamma:E\to E'$ in $Sp^\Sigma(\cc C,T)$ induces a morphism
of spectral categories
$\gamma_*:\corr^{E,\sigma}_*(\cc B)\to\corr^{E',\sigma}_*(\cc B)$.
\end{cor}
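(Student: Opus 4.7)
The plan is to define $\gamma_*$ to be the identity on objects of $\cc B$, and on morphism spectra to be induced levelwise by post-composition with $\gamma_n$. Explicitly, for each $X,Y\in\cc B$ and each $n\geq 0$, define
$$
(\gamma_*)_{n}^{X,Y}:\corr^{E,\sigma}_n(X,Y)\to\corr^{E',\sigma}_n(X,Y),\qquad
f\longmapsto (\id_Y\wedge\gamma_n\wedge\id_{S^n})\circ f.
$$
This is well-defined on morphism objects of $\cc C$ because $\gamma_n:E_n\to E'_n$ is a morphism of pointed objects, so the zero morphism goes to the zero morphism.

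First I would check that each $(\gamma_*)^{X,Y}_n$ is $\Sigma_n$-equivariant: this is immediate from the definition of the $\Sigma_n$-action in~\eqref{sigmaseq} together with the fact that $\gamma_n$ is $\Sigma_n$-equivariant by assumption. Next I would verify that the collection $\{(\gamma_*)^{X,Y}_n\}_{n\geq 0}$ assembles into a morphism of symmetric $S^1$-spectra, i.e.\ is compatible with the structure maps of $\corr^{E,\sigma}_*(X,Y)$. Recall from the proof of Theorem~\ref{tampa} that these structure maps are obtained (after applying $\Hom_{\cc C}(X\wedge P^{\wedge *},-)$) from the composites $E_n\wedge T\xrightarrow{E_n\wedge\iota_1}E_n\wedge E_1\xrightarrow{\mu_{n,1}}E_{n+1}$ smashed with $S^n$, followed by a coproduct inclusion. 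The required square then reduces to commutativity of
$$
\xymatrix@C=3.2pc{
E_n\wedge T\ar[d]_{\gamma_n\wedge T}\ar[r]^{E_n\wedge\iota_1}& E_n\wedge E_1\ar[d]^{\gamma_n\wedge\gamma_1}\ar[r]^{\mu_{n,1}}&E_{n+1}\ar[d]^{\gamma_{n+1}}\\
E'_n\wedge T\ar[r]_{E'_n\wedge\iota_1}& E'_n\wedge E'_1\ar[r]_{\mu_{n,1}}&E'_{n+1}
}
$$
which follows from $\gamma_1\circ\iota_1=\iota_1$ and $\gamma_{n+m}\circ\mu_{n,m}=\mu_{n,m}\circ(\gamma_n\wedge\gamma_m)$, the defining properties of a morphism of symmetric ring $T$-spectra.

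Next I would check compatibility with the pairings $\phi^\sigma_{X,Y,Z}$, i.e.\ that for all $n,m\geq 0$ the square
$$
\xymatrix{
\corr^{E,\sigma}_n(X,Y)\wedge\corr^{E,\sigma}_m(Y,Z)\ar[d]_{(\gamma_*)_n\wedge(\gamma_*)_m}\ar[r]^(0.62){\phi^\sigma_{X,Y,Z}}&\corr^{E,\sigma}_{n+m}(X,Z)\ar[d]^{(\gamma_*)_{n+m}}\\
\corr^{E',\sigma}_n(X,Y)\wedge\corr^{E',\sigma}_m(Y,Z)\ar[r]_(0.62){\phi^\sigma_{X,Y,Z}}&\corr^{E',\sigma}_{n+m}(X,Z)
}
$$
commutes. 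Tracing the definition of $\phi^\sigma$ given before Theorem~\ref{tampa}, commutativity boils down again to $\gamma_{n+m}\circ\mu^{E}_{n,m}=\mu^{E'}_{n,m}\circ(\gamma_n\wedge\gamma_m)$ together with naturality of the twist isomorphisms. Compatibility with identity morphisms reduces to $\gamma_0\circ\iota_0=\iota_0$.

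None of these verifications presents a real obstacle: the only content is in correctly unpacking the definitions from the proof of Theorem~\ref{tampa}. The most delicate step is the second one, verifying that the levelwise maps assemble into a map of symmetric $S^1$-spectra (as opposed to merely a map of symmetric sequences of pointed simplicial sets), since this is where the extra unit datum $\iota_1:T\to E_1$ of a symmetric ring $T$-spectrum (as opposed to a plain ring object of $\cc C^\Sigma$) enters. Once that diagram is checked, the remaining associativity and unit verifications are formal consequences of the definition of a morphism of symmetric ring $T$-spectra.
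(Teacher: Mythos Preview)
Your proposal is correct and is precisely the argument the paper has in mind: the paper gives no explicit proof beyond the remark that the corollary follows from the proof of Theorem~\ref{tampa}, and your levelwise post-composition definition together with the checks of $\Sigma_n$-equivariance, compatibility with the $S^1$-structure maps via $\iota_1$ and $\mu_{n,1}$, compatibility with $\phi^\sigma$, and preservation of units are exactly the details one extracts from that proof.
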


Day's Theorem~\cite{Day} together with Theorem~\ref{tampa} also imply the following result.

\begin{cor}
Under the notation of Theorem~\ref{tampa}
if $E$ is a commutative symmetric ring $T$-spectrum then the category of right
$\corr^{E,\sigma}_*(\cc B)$-modules is a closed symmetric
monoidal category.
\end{cor}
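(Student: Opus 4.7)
The plan is to reduce the statement to a direct application of Day's convolution theorem in the enriched setting, with Theorem~\ref{tampa} supplying the input hypothesis. Concretely, Day's theorem (in the form stated in~\cite{Day}, suitably enriched) asserts that whenever $\cc A$ is a small symmetric monoidal $\mathcal V$-category for a cocomplete closed symmetric monoidal base $\mathcal V$, the functor $\mathcal V$-category $[\cc A^{\op},\mathcal V]$ (i.e.\ the category of right $\cc A$-modules in $\mathcal V$) carries a closed symmetric monoidal structure whose tensor product is given by Day convolution and whose internal hom is the corresponding end.

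First, I would check that the base $\mathcal V = Sp^\Sigma_{S^1}$ satisfies the hypotheses of Day's theorem: it is a closed symmetric monoidal category (under the smash product of symmetric $S^1$-spectra) that is complete and cocomplete, so all the (co)limits needed to form Day convolution and the internal hom exist. This is standard and requires no verification here.

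Next, I would invoke the second half of Theorem~\ref{tampa}: since $E$ is assumed to be a commutative symmetric ring $T$-spectrum, the spectral category $\corr^{E,\sigma}_*(\cc B)$ is symmetric monoidal over $Sp^\Sigma_{S^1}$, with monoidal product on objects inherited from $\cc B$ and symmetric monoidal constraints inherited from those of $\cc B$. This is exactly the data needed to apply Day's theorem.

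Finally, applying Day's theorem to $\cc A = \corr^{E,\sigma}_*(\cc B)$ and $\mathcal V = Sp^\Sigma_{S^1}$ yields the desired closed symmetric monoidal structure on the category of right $\corr^{E,\sigma}_*(\cc B)$-modules. The only mild subtlety is the set-theoretic size issue: Day's theorem is usually stated for small enriched categories, whereas $\cc B$ need not be small. This can be handled by passing to an appropriate universe or by noting that in the intended applications $\cc B$ is essentially small; I would simply remark that the statement should be read in this spirit, as is customary throughout the paper. No genuine obstacle arises, so the proof is essentially a one-line citation of Day's theorem combined with Theorem~\ref{tampa}.
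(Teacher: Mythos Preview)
Your proposal is correct and matches the paper's approach exactly: the paper derives the corollary as an immediate consequence of Day's theorem~\cite{Day} together with Theorem~\ref{tampa}, with no further argument given. Your added remarks about the completeness/cocompleteness of $Sp^\Sigma_{S^1}$ and the size of $\cc B$ are reasonable elaborations but do not change the substance of the one-line citation the paper provides.
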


\begin{thm}\label{tampamore}
Suppose $E$ is a commutative symmetric ring $T$-spectrum in $Sp^\Sigma(\cc C,T)$.
Under the assumptions of Theorem~\ref{tampa} the spectral category $\corr^{E,\sigma}_*(\cc B)$ is also
a symmetric monoidal $\Mod\mathbf 1^E$-category with the same monoidal product on objects as in $\cc B$, where
$\Mod\mathbf 1^E$ is the closed symmetric monoidal category of Corollary~\ref{symmspcor}.
\end{thm}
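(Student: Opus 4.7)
The plan is to upgrade the $Sp^\Sigma_{S^1}$-enrichment from Theorem~\ref{tampa} to a $\Mod\mathbf 1^E$-enrichment by putting a natural right $\mathbf 1^E$-module structure on each morphism spectrum $\corr^{E,\sigma}_*(X,Y)$, and then to verify that both the composition pairings $\phi^\sigma_{X,Y,Z}$ and the monoidal pairings $\psi^{X,Y}_{X',Y'}$ (constructed in analogy with Theorem~\ref{capitals}) are morphisms in $\Mod\mathbf 1^E$. Commutativity of $E$ will be essential at every step where a factor of $\mathbf 1^E$ has to be moved past the $E_n$-coordinate of another morphism.

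First, I would define the right $\mathbf 1^E$-action
\[
\corr^{E,\sigma}_n(X,Y)\wedge \mathbf 1^E_m \longrightarrow \corr^{E,\sigma}_{n+m}(X,Y)
\]
by sending $(f:X\wedge P^{\wedge n}\to Y\wedge E_n\wedge S^n,\; h:P^{\wedge m}\to E_m\wedge S^m)$ to the composite
\[
X\wedge P^{\wedge n+m}\xrightarrow{f\wedge h} Y\wedge E_n\wedge S^n\wedge E_m\wedge S^m\xrightarrow{tw} Y\wedge E_n\wedge E_m\wedge S^{n+m}\xrightarrow{Y\wedge\mu_{n,m}\wedge S^{n+m}} Y\wedge E_{n+m}\wedge S^{n+m}.
\]
The $\Sigma_n\times\Sigma_m$-equivariance, the associativity, and the unitality of this action are all verified by the same diagram chases that appeared in the proof of Theorem~\ref{symmsp}(2) — indeed those diagrams make sense verbatim with the passive object $Y\wedge E_n$ in place of $E_n$. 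The fact that the pairing assembles into a bimorphism of symmetric $S^1$-spectra (i.e.\ fits into the bilinearity diagram~\eqref{bimorphism}) follows from centrality of $\iota_1:T\to E_1$.

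Second, I would verify that the composition pairing $\phi^\sigma_{X,Y,Z}$ is $\mathbf 1^E$-bilinear in the sense that
\[
\phi^\sigma_{X,Y,Z}(f,g)\cdot h \;=\; \phi^\sigma_{X,Y,Z}(f,\, g\cdot h)\qquad\text{and}\qquad (f\cdot h)\circ g \;\sim\; f\circ(h\cdot g)
\]
up to the prescribed $\Sigma$-shuffle, where the second identity is the one that requires commutativity of $E$ to move the $\mathbf 1^E$-factor past the $E_n\wedge S^n$-coordinate appearing in the codomain of $f$. Combining this with the associativity pentagon for $\mu_{*,*}$ (which was already established inside the proof of Theorem~\ref{tampa}) shows that $\phi^\sigma_{X,Y,Z}$ descends to a morphism in $\Mod\mathbf 1^E$. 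The analogous argument for $\psi^{X,Y}_{X',Y'}$ shows that the monoidal product is $\mathbf 1^E$-linear, so that $\corr^{E,\sigma}_*(\cc B)$ becomes a symmetric monoidal $\Mod\mathbf 1^E$-category with unit $S\in\Ob\corr^{E,\sigma}_*(\cc B)$; the $\Mod\mathbf 1^E$-natural associativity, symmetry, and unit isomorphisms are inherited from those in $\cc B$ as in Theorem~\ref{tampa}.

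The main obstacle, as in the proofs of Theorems~\ref{capitals}, \ref{symmsp} and \ref{tampa}, is the bookkeeping of shuffle permutations $\chi_{*,*}$ generated by the various twists $tw$ between $E$-coordinates, $S$-coordinates, and $P$-coordinates. However, since $E$ is commutative these shuffles of $E$-coordinates match exactly the shuffles of the corresponding $S^1$-coordinates dictated by the symmetry of $\wedge$, so the relevant large diagrams of $\mu_{*,*}$, $twist$ and $\chi_{*,*}$ collapse in the same way as in the commutativity square in the proof of Theorem~\ref{symmsp}(2). Once this bookkeeping is discharged, the module, composition, and monoidal compatibility axioms hold formally, completing the proof.
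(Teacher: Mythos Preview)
Your proposal is correct and follows essentially the same route as the paper. The paper defines the $\mathbf 1^E$-action by the identical formula you wrote down, and then compresses your bilinearity check for $\phi^\sigma_{X,Y,Z}$ into a single ``$\mathbf 1^E$-bimorphism'' diagram (the analogue of~\eqref{bimorphism} with $S^1$ replaced by $\mathbf 1^E_r$), whose left triangle and right square are exactly your two identities $\phi(f,g)\cdot h=\phi(f,g\cdot h)$ and $(f\cdot h)\circ g\sim f\circ(h\cdot g)$ up to the shuffle $1\times\chi_{r,q}$; after that it simply refers back to the proof of Theorem~\ref{tampa}.
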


\begin{proof}
Each symmetric spectrum $\corr^{E,\sigma}_*(\cc B)(U,V)$, $U,V\in\Ob\cc B$, is canonically in $\Mod\mathbf 1^E$
if we define compositions $\theta_{p,r}:\corr^{E,\sigma}_p(\cc B)(U,V)\wedge\mathbf 1^E_r\to\corr^{E,\sigma}_{p+r}(\cc B)(U,V)$ by
\begin{gather*}
U\wedge P^{\wedge p}\wedge P^{\wedge r}\xrightarrow{f\wedge g}
V\wedge E_p\wedge S^p\wedge E_r\wedge S^r
\xrightarrow{tw} V\wedge E_n\wedge E_r\wedge S^{n+r}
\xrightarrow{V\wedge \mu_{n,r}\wedge S^{n+r}}V\wedge E_{n+r}\wedge S^{n+r},
\end{gather*}
where $f\in\corr^{E,\sigma}_p(\cc B)(U,V)$, $g\in\mathbf 1^E_r$ and $\mu_{*,*}$ is the multiplication
map of $E$.
The proof is like that of Theorem~\ref{tampa} if we observe that the diagram
   \begin{equation}\label{bimorphismm}
     \xymatrix{&X_p\wedge Y_q\wedge\mathbf 1^E_r\ar[d]^{b_{p,q}\wedge \mathbf 1^E_r}\ar[rr]^{X_p\wedge twist}\ar[dl]_{X_p\wedge\theta_{q,r}}
                      &&X^p\wedge\mathbf 1^E_r\wedge Y_q\ar[d]^{\theta_{p,r}\wedge Y_q}\\
                      X_p\wedge Y_{q+r}\ar[dr]_{b_{p,q+r}}&Z_{p+q}\wedge\mathbf 1^E_r\ar[d]^{\theta_{p+q,r}}&&X_{p+r}\wedge Y_q\ar[d]^{b_{p+r,q}}\\
                      &Z_{p+q+r}&&Z_{p+r+q}\ar[ll]^{1\times\chi_{r,q}}}
   \end{equation}
is commutative with $X=\corr^{E,\sigma}_*(\cc B)(U,V),Y=\corr^{E,\sigma}_*(\cc B)(V,W),Z=\corr^{E,\sigma}_*(\cc B)(U,W)$
and $b=\phi_{U,V,W}$.
\end{proof}

\begin{cor}
Under the notation of Theorem~\ref{tampamore}
any morphism of two symmetric ring $T$-spectra $\gamma:E\to E'$ in $Sp^\Sigma(\cc C,T)$ induces a morphism
of $\Mod\mathbf 1^E$-categories
$\gamma_*:\corr^{E,\sigma}_*(\cc B)\to\corr^{E',\sigma}_*(\cc B)$.
\end{cor}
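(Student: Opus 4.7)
The plan is first to produce $\gamma_*$ as a morphism of spectral categories via the corollary following Theorem~\ref{tampa}, and then to upgrade its enrichment from $Sp^\Sigma_{S^1}$ to $\Mod\mathbf{1}^E$. As a preliminary step, I would apply Theorem~\ref{symmsp}(2) to $\gamma:E\to E'$ to obtain a morphism $\mathbf{1}^\gamma:\mathbf{1}^E\to\mathbf{1}^{E'}$ of commutative symmetric ring $S^1$-spectra, given levelwise by post-composition with $\gamma_n\wedge\id_{S^n}$. Restriction of scalars along $\mathbf{1}^\gamma$ turns the $\Mod\mathbf{1}^{E'}$-category $\corr^{E',\sigma}_*(\cc B)$ of Theorem~\ref{tampamore} into a $\Mod\mathbf{1}^E$-category, so that the statement we have to prove becomes meaningful.

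On morphism spectra, $\gamma_*$ is defined by the same formula as in the spectral-categories corollary after Theorem~\ref{tampa}: a correspondence $f:U\wedge P^{\wedge n}\to V\wedge E_n\wedge S^n$ is sent to $(\id_V\wedge\gamma_n\wedge\id_{S^n})\circ f$. Preservation of composition and of identities, as well as $\Sigma_n$-equivariance, is already provided by that earlier corollary, so the only new point is compatibility with the $\mathbf{1}^E$-action $\theta^E$ constructed in Theorem~\ref{tampamore}. Concretely, I would check commutativity of the square
   $$\xymatrix{\corr^{E,\sigma}_p(\cc B)(U,V)\wedge\mathbf{1}^E_r\ar[d]_{(\gamma_*)_p\wedge\mathbf{1}^\gamma_r}\ar[r]^(.6){\theta^E_{p,r}}
                      &\corr^{E,\sigma}_{p+r}(\cc B)(U,V)\ar[d]^{(\gamma_*)_{p+r}}\\
                      \corr^{E',\sigma}_p(\cc B)(U,V)\wedge\mathbf{1}^{E'}_r\ar[r]_(.6){\theta^{E'}_{p,r}}
                      &\corr^{E',\sigma}_{p+r}(\cc B)(U,V)}$$
for all $U,V\in\Ob\cc B$ and $p,r\geq 0$. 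Chasing a pair $(f,g)$ around both ways, after cancelling the common twist isomorphisms and the $S^p,S^r$ factors, reduces to the identity $\gamma_{p+r}\circ\mu^E_{p,r}=\mu^{E'}_{p,r}\circ(\gamma_p\wedge\gamma_r)$, which holds by the very definition of a morphism of ring objects in $\cc C^\Sigma$.

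The main, and essentially only, obstacle is bookkeeping: arranging the twists and the $S^p,S^r$ insertions so that both routes around the square assemble into the same composite $U\wedge P^{\wedge(p+r)}\to V\wedge E'_{p+r}\wedge S^{p+r}$. No genuinely new idea beyond Theorems~\ref{symmsp}, \ref{tampa}, and \ref{tampamore} is required, which is presumably why the result is recorded as an immediate corollary rather than with a full argument.
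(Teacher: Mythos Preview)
Your proposal is correct and matches the paper's intent: the corollary is stated without proof as an immediate consequence of the construction in Theorem~\ref{tampamore} (in the same pattern as the analogous corollaries following Theorems~\ref{capitals} and~\ref{tampa}). Your explicit verification---reducing the $\mathbf 1^E$-linearity of $\gamma_*$ to the ring-morphism identity $\gamma_{p+r}\circ\mu^E_{p,r}=\mu^{E'}_{p,r}\circ(\gamma_p\wedge\gamma_r)$ after making sense of the target via restriction of scalars along $\mathbf 1^\gamma$---is precisely the check one would extract from the proof of Theorem~\ref{tampamore}.
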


Day's Theorem~\cite{Day} together with Theorem~\ref{tampa} also imply the following result.

\begin{cor}\label{tampamorecor}
Under the notation of Theorem~\ref{tampamore}
if $E$ is a commutative symmetric ring $T$-spectrum then the category of 
$\corr^{E,\sigma}_*(\cc B)$-modules in the category $\Mod\mathbf 1^E$ is a closed symmetric
monoidal category.
\end{cor}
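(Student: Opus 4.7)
The plan is to invoke Day's theorem~\cite{Day} in exactly the same manner as in the two earlier corollaries phrased as ``Day's Theorem together with Theorem~\ref{tampa} also imply the following result''. The two structural inputs are already in place: Corollary~\ref{symmspcor} provides a closed symmetric monoidal structure on the enriching category $\Mod\mathbf 1^E$, and Theorem~\ref{tampamore} establishes that $\corr^{E,\sigma}_*(\cc B)$ is a symmetric monoidal $\Mod\mathbf 1^E$-category with the same monoidal product on objects as in $\cc B$. So the entire content of the corollary is that these two inputs slot into the hypotheses of Day's convolution theorem.

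First, I would recall the precise form of Day's theorem required: given a bicomplete closed symmetric monoidal category $\mathcal V$ and a (essentially) small symmetric monoidal $\mathcal V$-category $\mathcal C$, the category of $\mathcal V$-presheaves on $\mathcal C$ carries a closed symmetric monoidal structure whose tensor is Day convolution and whose internal hom is given by the corresponding end. I would then specialise to $\mathcal V=\Mod\mathbf 1^E$ and $\mathcal C=\corr^{E,\sigma}_*(\cc B)$, in which case the $\mathcal V$-presheaves are precisely the $\corr^{E,\sigma}_*(\cc B)$-modules in $\Mod\mathbf 1^E$ of the statement. Bicompleteness of $\Mod\mathbf 1^E$ is standard for the category of modules over a commutative monoid in the bicomplete closed symmetric monoidal category $Sp^\Sigma_{S^1}$, and the essential smallness of $\corr^{E,\sigma}_*(\cc B)$ is inherited from the implicit smallness assumption on $\cc B$ used throughout the paper.

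The only genuine subtlety, rather than a real obstacle, is that the Day convolution must be carried out \emph{inside} $\Mod\mathbf 1^E$ and not merely inside $Sp^\Sigma_{S^1}$; otherwise one only recovers the closed symmetric monoidal structure of the preceding corollary on $\corr^{E,\sigma}_*(\cc B)$-modules in $Sp^\Sigma_{S^1}$. This is where Theorem~\ref{tampamore} is essential: the $\Mod\mathbf 1^E$-enrichment of the monoidal structure on $\corr^{E,\sigma}_*(\cc B)$ makes the coends defining the Day product compatible with the $\mathbf 1^E$-action on the hom-spectra, so that both the tensor and the internal hom of $\mathcal V$-presheaves stay within $\Mod\mathbf 1^E$. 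With this compatibility in place, Day's theorem yields the asserted closed symmetric monoidal structure on the category of $\corr^{E,\sigma}_*(\cc B)$-modules in $\Mod\mathbf 1^E$.
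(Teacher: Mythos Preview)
Your proposal is correct and follows exactly the paper's approach: the paper's entire justification is the single sentence ``Day's Theorem~\cite{Day} together with Theorem~\ref{tampa} also imply the following result'', and you have simply unpacked that sentence with the hypotheses checked. If anything, you are slightly more precise than the paper, since you correctly identify Theorem~\ref{tampamore} (the $\Mod\mathbf 1^E$-enrichment) rather than Theorem~\ref{tampa} as the input needed to run Day convolution internally to $\Mod\mathbf 1^E$.
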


\section{Enriched motivic homotopy theory}\label{enriched}

One of the approaches to Morel--Voevodsky's stable motivic homotopy theory $SH(k)$ over a field $k$ 
is by means of symmetric $T$-spectra $Sp_T^\Sigma(k)$, where $T=\bb A^1/(\bb A^1-\{0\})$ (see, e.g.,~\cite{Jar}). In detail, we start with motivic spaces
$\cc M$ equipped with the flasque motivic model structure in the sense of~\cite{Is} and then pass to
$Sp_T^\Sigma(k)$ equipped with the stable model structure. The homotopy category of the latter model category
is denoted by $SH(k)$.

A genuinely local approach to $SH(k)$, envisioned by Voevodsky in 2001, 
is presented in~\cite{GP5}. It is based on Voevodsky's
framed correspondences and the machinery of framed motives~\cite{GP1}. 

In this section we suggest yet another (genuinely local) approach to $SH(k)$
and, more generally, a local model for the category of $E$-modules
in $SH(k)$, where $E$ is a symmetric Thom ring spectrum. It is an application of enriched
category theory of spectral categories and spectral modules of Section~\ref{spectral}. The same approach was used 
in~\cite{GP12,GP14} to construct the theory of $K$-motives. 

Following~\cite{GN}, a symmetric $T$-spectrum $E$ is called a {\it Thom spectrum}
if each motivic space $E_n$ has the form
\[E_n=\colim_i E_{n,i}, \text{  }E_{n,i}=V_{n,i}/(V_{n,i}-Z_{n,i}),\]
where $V_{n,i}\to V_{n,i+1}$ is a directed sequence of smooth
varieties, $Z_{n,i}\to Z_{n,i+1}$ is a directed system of smooth
closed subschemes in $V_{n,i}.$ We say that a Thom spectrum $E$ {\it
has the bounding constant $d$\/} if $d$ is the minimal integer such
that codimension of $Z_{n,i}$ in $V_{n,i}$ is strictly greater than
$n-d$ for all $i,n$. The $T$-spectrum $E$ is said to be
a {\it spectrum with contractible alternating group action}, if for
any $n$ and any even permutation $\tau\in \Sigma_n$ there is an
$\bb A^1$-homotopy $E_n\to \uhom(\bb A^1,E_n)$ between the action of
$\tau$ and the identity map. In other words, $E$ neglects the action
of even permutations up to $\bb A^1$-homotopy.

Unless it is specified otherwise $E$ is a symmetric Thom ring $T$-spectrum with the bounding constant $d\leq 1$ 
and contractible alternating group action throughout this section. By~\cite[Lemma~10.2]{GN} $E\in SH^{\eff}(k)$,
where $SH^{\eff}(k)$ is the full triangulated subcategory of $SH(k)$ of effective $T$-spectra.
It is compactly generated by the suspension $T$-spectra
$\Sigma^\infty_TX_+$, $X\in\textrm{Sm}/k$. 
For instance, $E$ is the algebraic cobordism $T$-spectrum
$MGL$ or motivic sphere spectrum $\cc S_k=(S^0,T,T^2,\ldots)$. Other examples are commutative symmetric ring $T^2$-spectra
$MSL$ and $MSp$~\cite{PW}. The results that use $T^2$-spectra are the same with those proven in this section and which use $T$-spectra.
For brevity, we will deal with $T$-spectra only. 

We can apply Theorem~\ref{tampamore} to the following data:
\begin{itemize}
\item[$\diamond$] $\cc C=\cc M$;
\item[$\diamond$] the canonical map $\sigma:\bb P^{\wedge 1}\to T$, where $\bb P^{\wedge 1}:=(\bb P^1,\infty)\in\cc M$,
given by the framed correspondence $(\{0\},\bb A^1,t)\in\Fr_1(\pt,\pt)$;
\item[$\diamond$] $\cc B=\{X_+\mid X\in\textrm{Sm}/k\}$.
\end{itemize}
Within this notation
the symmetric monoidal spectral category $\corr^{E,\sigma}_*(\cc B)$ of Theorem~\ref{tampamore} will be denoted
by $\cc O^E$ for brevity and each 
$\corr^{E,\sigma}_n(\cc B)(X,Y)=\Hom_{\cc M}(X_+\wedge\bb P^{\wedge n},Y_+\wedge E_n\wedge S^n)$ will be denoted by
$\Fr_n^E(X,Y\otimes S^n)$. Recall from~\cite[Section~3]{GP1} that each simplicial set 
$\Fr_n^E(X,Y\otimes S^n)=\Hom_{\cc M}(X_+\wedge\bb P^{\wedge n},Y_+\wedge E_n\wedge S^n)$ has an explicit 
geometric description due to Voevodsky's Lemma.

Similarly to Definition~\ref{galoissphere} we can consider a spectral category $\cc O^E_{\Delta}$ which is obtained
from $\cc O^E$ by applying the Suslin complex to symmetric spectra of morphisms:
   $$\cc O^E_{\Delta}(X,Y):=(\Fr_0^E(\Delta^\bullet_k\times X,Y),\Fr_1^E(\Delta_k^\bullet\times X,Y\otimes S^1),\ldots).$$

Let $\cc O$ be a spectral category and let $\Mod\cc O$ be the
category of $\cc O$-modules. Recall that the projective stable model
structure on $\Mod\cc O$ is defined as follows (see~\cite{SS1}). The
weak equivalences are the objectwise stable weak equivalences and
fibrations are the objectwise stable projective fibrations. The
stable projective cofibrations are defined by the left lifting
property with respect to all stable projective acyclic fibrations.

Let $\cc Q$ denote the set of elementary distinguished squares in
$\text{Sm}/k$ (see~\cite[3.1.3]{MV})
   \begin{equation*}\label{squareQ}
    \xymatrix{\ar@{}[dr] |{\textrm{$Q$}}U'\ar[r]\ar[d]&X'\ar[d]^\phi\\
              U\ar[r]_\psi&X}
   \end{equation*}
and let $\cc O$ be a spectral category over $\text{Sm}/k$. By $\cc Q_{\cc O}$ denote the
set of squares
   \begin{equation}\label{squareOQ}
    \xymatrix{\ar@{}[dr] |{\textrm{$\cc O Q$}}\cc O(-,U')\ar[r]\ar[d]&\cc O(-,X')\ar[d]^\phi\\
              \cc O(-,U)\ar[r]_\psi&\cc O(-,X)}
   \end{equation}
which are obtained from the squares in $\cc Q$ by taking $X\in
\text{Sm}/k$ to $\cc O(-,X)$. The arrow $\cc O(-,U')\to\cc O(-,X')$
can be factored as a cofibration $\cc O(-,U')\rightarrowtail Cyl$
followed by a simplicial homotopy equivalence $Cyl\to\cc O(-,X')$.
There is a canonical morphism $A_{\cc O Q}:=\cc O(-,U)\bigsqcup_{\cc
O(-,U')} Cyl\to\cc O(-,X)$.

\begin{dfn}[see~\cite{GP12,GP14}]\label{1214}
We say that $\cc O$ is {\it Nisnevich excisive\/} if for every
elementary distinguished square $Q$
the square $\cc O Q$~\eqref{squareOQ} is homotopy pushout in the
Nisnevich local model structure on $Sp_{S^1}^\Sigma(k):=Sp^\Sigma(\cc M,{S^1})$.

The {\it Nisnevich local model structure\/} on $\Mod\cc O$ is the
Bousfield localization of the stable projective model structure with
respect to the family of projective cofibrations
   \begin{equation*}\label{no}
    \cc N_{\cc O}=\{\cyl(A_{\cc O Q}\to\cc O(-,X))\}_{\cc Q_{\cc O}}.
   \end{equation*}
The homotopy category for the Nisnevich local model structure will
be denoted by $\shnis\cc O$. 
\end{dfn}

Suppose $\cc O$ is symmetric monoidal. By a theorem of
Day~\cite{Day} $\Mod\cc O$ is a closed symmetric monoidal category
with smash product $\wedge$ and $\cc O(-,\pt)$ being the monoidal
unit. The smash product is defined as
   \begin{equation}\label{smash}
    M\wedge_{\cc O} N=\int^{\Ob\cc O\otimes\cc O}M(X)\wedge N(Y)\wedge\cc O(-,X\times Y).
   \end{equation}
The internal Hom functor, right adjoint to $-\wedge_{\cc O}M$, is given by
   $$\underline{\Mod}\cc O(M,N)(X):=Sp^\Sigma(M,N(X\times-))=\int_{Y\in\Ob\cc O}\underline{Sp}^\Sigma(M(Y),N(X\times Y)).$$
By~\cite[Corollary~2.7]{DRO} that there is a natural isomorphism
   $$\cc O(-,X)\wedge_{\cc O}\cc O(-,Y)\cong\cc O(-,X\times Y).$$

\begin{thm}[\cite{GP12}]\label{modelmot}
Suppose $\cc O$ is a Nisnevich excisive
spectral category. Then the Nisnevich local model
structure on $\Mod\cc O$ is cellular, proper, spectral and weakly
finitely generated. Moreover, a map of $\cc O$-modules is a weak
equivalence in the Nisnevich local model
structure if and only if it is a weak equivalence in the Nisnevich local
model structure on $Sp_{S^1}^\Sigma(k)$. If
$\cc O$ is a symmetric monoidal spectral category then the
model structure on $\Mod\cc O$ is symmetric monoidal with respect
to the smash product~\eqref{smash} of $\cc O$-modules.
\end{thm}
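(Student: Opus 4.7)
The plan is to realise the Nisnevich local model structure as a left Bousfield localisation of the projective stable model structure on $\Mod\cc O$ at the set $\cc N_{\cc O}$, then to read off each listed property from general formalism combined with the geometric input of Nisnevich excisiveness. First I would invoke the standard result, following Schwede--Shipley~\cite{SS1} and the refinements of~\cite{DRO}, that the projective stable model structure on $\Mod\cc O$ is cellular, proper, spectral and weakly finitely generated, and is symmetric monoidal with respect to $\wedge_{\cc O}$ whenever $\cc O$ is. Since $\cc N_{\cc O}$ is a set of projective cofibrations between cofibrant objects of a left proper cellular model category, Hirschhorn's left Bousfield localisation exists, and cellularity together with left properness pass to it automatically.

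The main geometric step is to identify the $\cc N_{\cc O}$-local weak equivalences with objectwise Nisnevich local weak equivalences of presheaves of symmetric $S^1$-spectra. By construction of $\cc N_{\cc O}$, a projectively stably fibrant $\cc O$-module $M$ is $\cc N_{\cc O}$-local iff $M$ carries each square $\cc O Q$ to a homotopy pullback of symmetric $S^1$-spectra. Nisnevich excisiveness of $\cc O$ says that each $\cc O Q$ is a homotopy pushout in the Nisnevich local structure on $Sp^\Sigma_{S^1}(k)$; hence the $\cc N_{\cc O}$-local fibrant modules are exactly the projectively stably fibrant $\cc O$-modules whose underlying presheaves of symmetric $S^1$-spectra are Nisnevich locally fibrant. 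A Bousfield--Friedlander style comparison then yields that a map of $\cc O$-modules is a Nisnevich local weak equivalence iff its underlying map of presheaves of symmetric spectra is a weak equivalence in $Sp^\Sigma_{S^1}(k)$. This identification gives right properness as well, inherited from right properness of $Sp^\Sigma_{S^1}(k)$, and it combines with the finite presentability of the generating (acyclic) cofibrations of the projective structure and of the cylinders $\cyl(A_{\cc O Q}\to\cc O(-,X))$ to supply the weakly finitely generated property.

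For the monoidal claim, Day's theorem furnishes the closed symmetric monoidal category $(\Mod\cc O,\wedge_{\cc O},\cc O(-,\pt))$ at the underlying-category level, so what remains is the pushout-product axiom for the localised structure. On top of the pushout-product axiom for the projective stable structure, the key input is that each cell of $\cc N_{\cc O}$ stays an acyclic cofibration after smashing with any $\cc O(-,Y)$, which follows from $\cc O(-,X)\wedge_{\cc O}\cc O(-,Y)\cong\cc O(-,X\times Y)$ together with the stability of elementary distinguished squares under product with a smooth $k$-scheme $Y$. The main obstacle I would expect is precisely the identification of $\cc N_{\cc O}$-local weak equivalences with underlying Nisnevich local equivalences in $Sp^\Sigma_{S^1}(k)$: this is where Nisnevich excisiveness does the real work, converting abstract categorical descent for $\cc O$-modules into honest geometric Nisnevich descent; the remaining monoidal and smallness checks are largely formal once this equivalence of weak equivalences is in hand.
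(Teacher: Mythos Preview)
The paper does not supply its own proof of this theorem: it is stated with attribution to \cite{GP12} and no argument is given here. There is therefore nothing in the present paper to compare your proposal against.

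That said, your outline is a faithful reconstruction of the standard strategy one expects in \cite{GP12}: start from the projective stable model structure on $\Mod\cc O$ via \cite{SS1,DRO}, Bousfield-localise at the set $\cc N_{\cc O}$, and use Nisnevich excisiveness to identify $\cc N_{\cc O}$-local fibrant objects with projectively fibrant modules whose underlying presheaves of symmetric $S^1$-spectra satisfy Nisnevich descent, thereby matching the two classes of weak equivalences. The monoidal step you sketch, reducing the pushout-product axiom for the localised structure to the isomorphism $\cc O(-,X)\wedge_{\cc O}\cc O(-,Y)\cong\cc O(-,X\times Y)$ and stability of elementary distinguished squares under $-\times Y$, is likewise the expected route. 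If you want to turn this into a self-contained proof rather than a citation, the place needing the most care is precisely the one you flag: the passage from ``$\cc O Q$ is a Nisnevich-local homotopy pushout'' to ``$\cc N_{\cc O}$-local equals underlying Nisnevich-local'' requires an explicit comparison of the two localisations (e.g.\ via a Quillen adjunction with $Sp^\Sigma_{S^1}(k)$ and matching fibrant objects), not just a heuristic.
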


In our setting we regard spectral categories $\cc O^E$, $\cc O^E_{\Delta}$ as 
symmetric monoidal $\Mod\mathbf 1^E$-categories with the same monoidal product on objects as in $\textrm{Sm}/k$
(see Theorem~\ref{tampamore}), where $\Mod\mathbf 1^E$ is the closed symmetric 
monoidal category of Corollary~\ref{symmspcor}. Denote by $\mathsf{Mod}\cc O^E$ and
$\mathsf{Mod}\cc O^E_\Delta$ the closed symmetric
monoidal categories of 
$\cc O^E$- and $\cc O^E_{\Delta}$-modules in the category $\Mod\mathbf 1^E$ 
(see Corollary~\ref{tampamorecor}).

The Nisnevich local model structure on $\mathsf{Mod}\cc O^E$ and
$\mathsf{Mod}\cc O^E_\Delta$ as well as their homotopy categories $\mathsf{SH}^{\nis}_{S^1}\cc O^E$ and
$\mathsf{SH}^{\nis}_{S^1}\cc O^E_\Delta$ are defined similarly to Definition~\ref{1214}.

Given $X\in\textrm{Sm}/k$ 
and a motivic space $G\in\cc M$, denote by $C_*\Fr^E_n(X_+\wedge G)$ the pointed motivic space
$U\in\textrm{Sm}/k\mapsto\Hom_{\cc M}((U\times\Delta_k^\bullet)_+\wedge\bb P^{\wedge n},X_+\wedge G\wedge E_n)$. 
One has a canonical map $C_*\Fr^E_n(X_+\wedge G)\to C_*\Fr^E_{n+1}(X_+\wedge G)$
defined by the composition
   \begin{multline*}
    \Hom_{\cc M}((U\times\Delta_k^\bullet)_+\wedge\bb P^{\wedge n},X_+\wedge G\wedge E_n)\xrightarrow{-\wedge\sigma}\\
       \Hom_{\cc M}((U\times\Delta_k^\bullet)_+\wedge\bb P^{\wedge n+1},X_+\wedge G\wedge E_n\wedge T)\to
       \uhom_{\cc M}((U\times\Delta_k^\bullet)_+\wedge\bb P^{\wedge n+1},X_+\wedge G\wedge E_{n+1}).
    \end{multline*}
We set,
   $$C_*\Fr^E(X_+\wedge G):=\colim(C_*\Fr^E_0(X_+\wedge G)\to C_*\Fr^E_1(X_+\wedge G)\to\cdots).$$
If we drop $\Delta_k^\bullet$ from the definition of $C_*\Fr^E(X_+\wedge G)$, one gets motivic spaces $\Fr^E(X_+\wedge G)$.

\begin{dfn}\label{sigmafrmotvar}
The {\it symmetric $E$-framed motive\/} of a smooth algebraic variety 
$X\in\textrm{Sm}/k$ is the symmetric $S^1$-spectrum 
   $${M}^\Sigma_{E}(X):=(C_*\Fr^E(X),\uhom_{\cc M}(\bb P^{\wedge 1},C_*\Fr^E(X_+\wedge E_1\wedge S^1)),
       \uhom_{\cc M}(\bb P^{\wedge 2},C_*\Fr^E(X_+\wedge E_2\wedge S^2),\ldots)$$
with structure maps defined similarly to ${M}^\Sigma_{fr}(N)$ of Definition~\ref{sigmafrmot}.
\end{dfn}

\begin{rem}\label{mfrvar}
The $E$-framed motive in the sense of the
preceding definition is a bit different from the $E$-framed motive of
$X$ in the sense of~\cite{GN} defined as
   $${M}_{E}(X):=(C_*\Fr^E(X),C_*\Fr^E(X\wedge S^1)),C_*\Fr^E(X\wedge S^2)),\ldots).$$
\end{rem}

\begin{lem}\label{mfrsymmvar}
If $k$ is a perfect field, then the canonical map of ordinary
$S^1$-spectra $M_{E}(X)\to{M}^\Sigma_{E}(X)$ is a level local
equivalence in positive degrees for any $X\in \textrm{Sm}/k$.
\end{lem}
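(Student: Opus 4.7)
The plan is to mimic the proof of Lemma~\ref{mfrsymm} in the $E$-framed setting. At each positive level $n\geq 1$, the canonical map $M_E(X)\to M^\Sigma_E(X)$ is, by Definition~\ref{sigmafrmotvar} together with Remark~\ref{mfrvar}, the natural map of pointed motivic spaces
$$\eta_n\colon C_*\Fr^E(X_+\wedge S^n)\longrightarrow\uhom_{\cc M}\bigl(\bb P^{\wedge n},C_*\Fr^E(X_+\wedge E_n\wedge S^n)\bigr),$$
adjoint to the $\bb P^{\wedge n}$-loop structure assembled from $\sigma\colon\bb P^{\wedge 1}\to T$ and the $T$-spectrum structure maps of $E$. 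It therefore suffices to show that $\eta_n$ is a Nisnevich-local weak equivalence for every $n\geq 1$.

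Under the standing hypotheses on $E$ made at the start of this section (symmetric Thom ring $T$-spectrum with bounding constant $d\leq 1$ and contractible alternating group action), the $E$-framed motives machinery of~\cite{GN} applies. Repeating the proof of~\cite[Lemma~4.12]{GN} verbatim in the $E$-framed setting gives that, for any pointed motivic space $Y$ and integers $k,\ell>0$, the canonical map
$$C_*\Fr^E(Y\wedge S^\ell)\longrightarrow\uhom_{\cc M}\bigl(\bb P^{\wedge k},C_*\Fr^E(T^k\wedge Y\wedge S^\ell)\bigr)$$
is a local weak equivalence (connectivity of the source being ensured by $\ell>0$ together with perfectness of $k$). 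Taking $Y=X_+$, $k=\ell=n$, and absorbing the $E_n$-factor in the target of $\eta_n$ into $T^n$ via the iterated structure map $T^n\wedge E_0\to E_n$ of the symmetric ring $T$-spectrum $E$, we deduce that $\eta_n$ is a local weak equivalence for every $n\geq 1$, as required.

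The main obstacle is the $E$-framed analogue of~\cite[Lemma~4.12]{GN}; its non-formal content is the cancellation/$\bb P^{\wedge 1}$-loop theorem for $E$-framed motives, which depends on the bounding-constant hypothesis $d\leq 1$, the contractible alternating group action on $E$, and perfectness of the base field. Everything else is a routine diagram chase verifying that the $E_n$-factor commutes with $\bb P^{\wedge n}$-looping, which is immediate from the multiplicative structure of $E$ and the definition of the comparison map.
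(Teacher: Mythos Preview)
Your approach is the same as the paper's: reduce to the $E$-framed analogue of the $\bb P^{\wedge k}$-looping lemma from~\cite{GN}, applied level by level. The paper's proof is one line: ``like Lemma~\ref{mfrsymm}; we also use~\cite[Section~7]{GN}.'' Section~7 of~\cite{GN} is precisely where the $E$-framed version of~\cite[Lemma~4.12]{GN} lives, so your citation of Lemma~4.12 ``repeated verbatim in the $E$-framed setting'' should really point there.

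One small wrinkle: you state the intermediate result with $T^k$ in the target and then propose to ``absorb the $E_n$-factor into $T^n$'' via the unit map $T^n\to E_n$. This extra reduction is not needed and, as written, is not justified: you would have to argue separately that the induced map $\uhom_{\cc M}(\bb P^{\wedge n},C_*\Fr^E(X_+\wedge T^n\wedge S^n))\to\uhom_{\cc M}(\bb P^{\wedge n},C_*\Fr^E(X_+\wedge E_n\wedge S^n))$ is a local equivalence. The $E$-framed looping statement in~\cite[Section~7]{GN} already produces the $E_k$-version directly (the structure maps of the $E$-framed motive are built from the $E$-multiplication, not from $T$), so the comparison map $\eta_n$ is exactly the map that lemma declares to be a local equivalence. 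Drop the $T^k$ detour and cite Section~7 directly; the rest of your argument is fine.
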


\begin{proof}
The proof is like that of Lemma~\ref{mfrsymm}. We also use~\cite[Section~7]{GN} here.
\end{proof}

Though ${M}_{E}(X)$ is canonically a symmetric
$S^1$-spectrum in $Sp^\Sigma_{S^1}(k)$, where $\Sigma_n$ acts on each space by permuting
$S^n$, the point is that it is not an $\cc O^E_{\Delta}$-module in
contrast with ${M}^\Sigma_{E}(X)$.

\begin{prop}\label{frmodulevar}
Given a field $k$ and $X\in\mathrm{Sm}/k$, the following
statements are true:
\begin{enumerate}
\item ${M}^\Sigma_{E}(X)$ is an $\cc O^E_{\Delta}$-module;
\item the canonical map
$\alpha:\cc O^E_{\Delta}(-,X)\to M_{E}^\Sigma(X)$ in $\mathsf{Mod}\cc O^E_\Delta$ 
is a sectionwise $\pi_*$-isomorphism (i.e. a stable equivalence of
ordinary spectra) whenever the base field $k$ is perfect.
\end{enumerate}
\end{prop}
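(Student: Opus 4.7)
The plan is to imitate Proposition~\ref{frmodule} in the $E$-framed setting: part (1) is a direct enriched-category construction, while part (2) reduces, via the naive-stabilization diagram of Proposition~\ref{frmodule}(3), to an $E$-framed analog of \cite[Lemma~4.12]{GN}.

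For part (1), for each pair $U,V\in Sm/k$ I would exhibit a bimorphism of symmetric $S^1$-spectra
$$\nu^{U,V}_{p,q}:M^\Sigma_E(X)(V)_p\wedge \cc O^E_\Delta(U,V)_q\longrightarrow M^\Sigma_E(X)(U)_{p+q}$$
by sending a pair $\beta:(\Delta^\ell)_+\wedge V_+\wedge \bb P^{\wedge p}\to X_+\wedge E_p\wedge S^p$ and $\alpha:(\Delta^\ell)_+\wedge U_+\wedge \bb P^{\wedge q}\to V_+\wedge E_q\wedge S^q$ to the composite obtained by applying the cosimplicial diagonal $\diag:\Delta^\bullet\to\Delta^\bullet\times\Delta^\bullet$, smashing $\alpha$ and $\beta$ in the appropriate order, twisting past $E_p\wedge S^p$, applying $\mu_{p,q}\colon E_p\wedge E_q\to E_{p+q}$, and collapsing $S^p\wedge S^q\cong S^{p+q}$. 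The bilinearity diagram~\eqref{bimorphism}, $\Sigma_p\times\Sigma_q$-equivariance, associativity, and unitality are inherited from the corresponding properties of $E$, of the structure maps of $M^\Sigma_E(X)$, and of $\diag$, exactly as in Proposition~\ref{frmodule}(1) and Theorem~\ref{tampa}. Compatibility with the $\Mod\mathbf 1^E$-enrichment is automatic, since the $\mathbf 1^E$-action on both sides is a special case of the same pairing.

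For part (2), I would argue sectionwise: fix $U\in Sm/k$ and show that $\alpha(U)\colon \cc O^E_\Delta(U,X)\to M^\Sigma_E(X)(U)$ is a stable equivalence of non-symmetric $S^1$-spectra. Following the proof of Proposition~\ref{frmodule}(3), set $A_{n,r}(U):=C_*\Fr^E_{n+r}(U\times \bb P^{\wedge n},X\otimes S^r)$ and let $B_{m,n,r}(U)$ be the analogous expression with an additional $\bb P^{\wedge m}$ in the source and $T^m$ in the target; form $A_r(U):=\colim_n A_{n,r}(U)$ and $B_r(U):=\colim_{m,n}B_{m,n,r}(U)$, which assemble into $S^1$-spectra $A(U)$ and $B(U)$. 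This produces the commutative diagram
$$\xymatrix{\cc O^E_\Delta(U,X)\ar[r]^(.55)a\ar[d]_{\alpha(U)}&A(U)\ar[r]^(.4){j_A}\ar[d]^c
   &\Theta^\infty_{S^1}A(U)\ar[d]^{\Theta^\infty_{S^1}(c)}\\
   M^\Sigma_E(X)(U)\ar[r]_(.55)b&B(U)\ar[r]_(.4){j_B}&\Theta^\infty_{S^1}B(U)}$$
with $\Theta^\infty_{S^1}A(U)=\Theta^\infty_{S^1}\cc O^E_\Delta(U,X)$ and $\Theta^\infty_{S^1}B(U)=\Theta^\infty_{S^1}M^\Sigma_E(X)(U)$. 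Since $a$ and $b$ are stable equivalences by \cite[Proposition~4.7]{H}, two-out-of-three reduces the claim to showing that $c$ is a stable equivalence.

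The main obstacle, and the only new technical input, is to show that $c$ is a level weak equivalence in positive degrees. Exactly as in Proposition~\ref{frmodule}(3), $c_r\colon A_r(U)\to B_r(U)$ decomposes as an infinite composition of maps of the form
$$C_*\Fr^E(U\times S^r)\longrightarrow \uhom_{\cc M}\bigl(\bb P^{\wedge m},\,C_*\Fr^E(U\times T^m\wedge S^r)\bigr).$$
For the classical framed case this is \cite[Lemma~4.12]{GN}; under our standing hypotheses that $k$ is perfect and $E$ is a symmetric Thom ring $T$-spectrum with bounding constant $d\leq 1$ and contractible alternating group action, the same argument goes through using the $E$-framed cancellation-type results of \cite[Section~7]{GN} together with the geometric description of $\Fr_n^E$ afforded by Voevodsky's Lemma~\cite{GP1}. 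This yields a level weak equivalence of $c$ in positive degrees, hence a stable equivalence, and thereby completes the proof of (2).
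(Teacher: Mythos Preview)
Your proposal is correct and follows essentially the same approach as the paper: the paper's own proof simply says that (1) holds ``for the same reasons as the representable $\cc O^E_{\Delta}(-,X)$ is'' and that (2) ``is like that of Proposition~\ref{frmodule}'' using \cite[Section~7]{GN}, and you have unpacked exactly this. A couple of notational slips to clean up: in your definition of $A_{n,r}(U)$ the subscript $n+r$ and the $\bb P^{\wedge n}$ in the source are off (compare the original $A_{n,r}:=C_*(\bb P^{\wedge n},T^n\wedge S^r)$; the $E$-framed analog should be $C_*\Hom_{\cc M}(U_+\wedge\bb P^{\wedge n},X_+\wedge E_n\wedge S^r)$), and in your final display the target variety $X$ has disappeared in favor of $U$.
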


\begin{proof}
(1). ${M}^\Sigma_{E}(X)$ is an $\cc O^E_{\Delta}$-module for the same reasons as 
the representable $\cc O^E_{\Delta}(-,X)$ is.

(2). The proof is like that of Proposition~\ref{frmodule}. We also use~\cite[Section~7]{GN} here.
\end{proof}

\begin{thm}\label{spectralmore}
Let $k$ be a perfect field. The commutative spectral category $\cc O^E_{\Delta}$ is Nisnevich excisive
and the Nisnevich local model structure on $\mathsf{Mod}\cc O^E_\Delta$ has all the properties of Theorem~\ref{modelmot}.
The category $\mathsf{SH}^{\nis}_{S^1}\cc O^E_{\Delta}$ is closed symmetric monoidal 
compactly generated triangulated with compact
generators being the symmetric $E$-framed motives $\{M_E^\Sigma(X)\mid X\in\textrm{Sm}/k\}$.
The monoidal product $M_E^\Sigma(X)\wedge^L M_E^\Sigma(X)$ in $\mathsf{SH}^{\nis}_{S^1}\cc O^E_{\Delta}$
is isomorphic to $M_E^\Sigma(X\times Y)$.
\end{thm}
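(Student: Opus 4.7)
The plan is to verify, in order, (i) Nisnevich excisiveness of $\cc O^E_{\Delta}$, (ii) the model-categorical properties transferred from Theorem~\ref{modelmot}, (iii) compact generation by the symmetric $E$-framed motives, and (iv) the monoidal product formula. Throughout I rely on Theorem~\ref{tampamore}, Corollary~\ref{tampamorecor} and Proposition~\ref{frmodulevar}, together with the Mayer--Vietoris property of $C_*\Fr^E(-)$ established in~\cite{GP1,GN}.

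First I would show that $\cc O^E_{\Delta}$ is Nisnevich excisive. Sectionwise over a smooth $V$, the $n$-th level of $\cc O^E_{\Delta}(V,?)$ is $\Fr_n^E(\Delta^\bullet_k\times V,(?)_+\wedge E_n\wedge S^n)$, so for an elementary distinguished square $Q$ the square $\cc O^E_{\Delta} Q$ decomposes levelwise into a family of squares of pointed simplicial sets. The fact that each such level is Nisnevich-locally homotopy cocartesian is precisely the Mayer--Vietoris property of the $E$-framed motive construction at Hensel local points, established in~\cite[Section~7]{GN} (generalising~\cite{GP1}); the perfectness of $k$ enters here through strict $\bb A^1$-invariance and Nisnevich excision for radditive framed functors. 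These levelwise equivalences are compatible with the structure maps and the $\Sigma_n$-actions, and thus assemble to a homotopy pushout square in the Nisnevich local model structure on $Sp_{S^1}^\Sigma(k)$.

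Granted excisiveness, I would then apply Theorem~\ref{modelmot} to obtain the Nisnevich local model structure on $\Mod\cc O^E_{\Delta}$ with all the stated properties. Since $\mathsf{Mod}\cc O^E_{\Delta}$ is the category of $\cc O^E_{\Delta}$-modules in the closed symmetric monoidal category $\Mod\mathbf 1^E$ of Corollary~\ref{symmspcor}, the same properties transfer by a mild enrichment of Theorem~\ref{modelmot}, using Corollary~\ref{tampamorecor} together with the bimorphism compatibility~\eqref{bimorphismm}. For compact generation, the representables $\{\cc O^E_{\Delta}(-,X)\}_{X\in Sm/k}$ are compact generators of the projective stable structure and remain so after the Bousfield localisation along $\cc N_{\cc O}$; Proposition~\ref{frmodulevar}(2) supplies sectionwise $\pi_*$-isomorphisms $\alpha_X:\cc O^E_{\Delta}(-,X)\to M^\Sigma_E(X)$, which are in particular Nisnevich local weak equivalences, so replacing each representable by its $\alpha_X$-image yields an equivalent family of compact generators by the symmetric $E$-framed motives.

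For the product formula, Day convolution gives the canonical isomorphism $\cc O^E_{\Delta}(-,X)\wedge_{\cc O^E_{\Delta}}\cc O^E_{\Delta}(-,Y)\cong\cc O^E_{\Delta}(-,X\times Y)$ (cf.~\cite[Corollary~2.7]{DRO}). Since the smash product on $\mathsf{Mod}\cc O^E_{\Delta}$ is left Quillen and each $\alpha_X$ is a Nisnevich local equivalence between cofibrant objects, applying $\alpha_X$, $\alpha_Y$ and $\alpha_{X\times Y}$ identifies $M^\Sigma_E(X)\wedge^L M^\Sigma_E(Y)$ with $M^\Sigma_E(X\times Y)$ in $\mathsf{SH}^{\nis}_{S^1}\cc O^E_{\Delta}$. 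The main obstacle of the whole argument is the first step: one must verify that the simultaneous $\bb P^{\wedge n}$-desuspension, $E_n\wedge S^n$-twist and Suslin complex occurring in the definition of $\cc O^E_{\Delta}$ preserve the local homotopy-cocartesian property at the level of symmetric $S^1$-spectra. Once this is in hand the remaining assertions follow by formal arguments.
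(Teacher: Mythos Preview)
Your overall strategy matches the paper's: both deduce Nisnevich excisiveness from~\cite{GN}, transfer the model structure via Theorem~\ref{modelmot}, replace representables by symmetric $E$-framed motives using Proposition~\ref{frmodulevar}(2), and obtain the product formula from the Day convolution isomorphism $\cc O^E_\Delta(-,X)\wedge\cc O^E_\Delta(-,Y)\cong\cc O^E_\Delta(-,X\times Y)$.

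The one genuine difference, and the place where your argument has a gap, is step~(i). You argue levelwise, asserting that for each fixed $n$ the simplicial set $\Fr^E_n(\Delta^\bullet_k\times V,(?)_+\wedge E_n\wedge S^n)$ sends an elementary distinguished square to a Nisnevich-locally homotopy cocartesian square. But the Mayer--Vietoris results of~\cite{GN} concern the \emph{stabilised} functor $C_*\Fr^E(-)$ and the spectrum $M_E(-)$ (this is~\cite[Section~9]{GN}, not Section~7), not a fixed framing level~$n$; the unstabilised levels are not known to satisfy excision. The paper avoids this by working entirely at the spectrum level: it uses the zigzag of sectionwise $\pi_*$-isomorphisms
\[
\cc O^E_\Delta(-,X)\xrightarrow{\ \alpha\ }M^\Sigma_E(X)\longleftarrow M_E(X)
\]
furnished by Proposition~\ref{frmodulevar}(2) and Lemma~\ref{mfrsymmvar} to reduce to Nisnevich excision for $M_E$, and then invokes~\cite[Section~9]{GN}. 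Since homotopy pushouts in the Nisnevich local model structure on $Sp^\Sigma_{S^1}(k)$ are detected on stable homotopy sheaves, this reduction suffices. You already use $\alpha$ in steps~(iii) and~(iv); using it in step~(i) as well closes the gap and is precisely what the paper does.

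A minor point on step~(iv): you describe $\alpha_X$ as an equivalence ``between cofibrant objects'', but there is no reason $M^\Sigma_E(X)$ should be cofibrant, and this is not needed. One simply computes $\wedge^L$ on the cofibrant representables $\cc O^E_\Delta(-,X)$ and then transports along $\alpha$, as the paper does and as your own final sentence in that paragraph already indicates.
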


\begin{proof}
$\cc O^E_{\Delta}$ is Nisnevich excisive by~\cite[Section~9]{GN}, Lemma~\ref{mfrsymmvar}
and Proposition~\ref{frmodulevar}. The fact that the Nisnevich local model structure on 
$\mathsf{Mod}\cc O^E_\Delta$ has all the properties of Theorem~\ref{modelmot} follows from the 
fact that $\cc O^E_\Delta$ is Nisnevich excisive symmetric monoidal.

$\mathsf{SH}^{\nis}_{S^1}\cc O^E_{\Delta}$ is closed symmetric monoidal 
compactly generated triangulated with compact
generators being the representable $\cc O^E_{\Delta}$-modules $\{\cc O^E_{\Delta}(-,X)\mid X\in\textrm{Sm}/k\}$.
The isomorphism $M_E^\Sigma(X)\wedge M_E^\Sigma(Y)\cong M_E^\Sigma(X\times Y)$ in
$\mathsf{SH}^{\nis}_{S^1}\cc O^E_{\Delta}$ follows from the isomorphism 
   $$\cc O^E_{\Delta}(-,X\times Y)\cong\cc O^E_{\Delta}(-,X)\wedge\cc O^E_{\Delta}(-,Y)\cong\cc O^E_{\Delta}(-,X)\wedge^L\cc O^E_{\Delta}(-,Y)$$
and Proposition~\ref{frmodulevar}(2). The previous proposition also shows that
compact generators can be given by the symmetric $E$-framed motives $\{M_E^\Sigma(X)\mid X\in\textrm{Sm}/k\}$.
\end{proof}

Let $\bb G_m^{\wedge 1}$ be the mapping cone in $\Delta^{\op}\Fr_0(k)$ associated with $1:\pt\to\bb G_m$.
There is a suspension functor 
 $\Sigma^\infty_{\bb G_m}$ from $\mathsf{Mod}\cc O^E_\Delta$ to $(S^1,\bb G)$-bispectra $Sp_{S^1,\bb G}(k)$:
   $$\Sigma^\infty_{\bb G_m}(\cc X):=(\cc X(\pt),\cc X(\bb G_m^{\wedge 1}),\cc X(\bb G_m^{\wedge 2}),\ldots).$$
Here $\cc X(\bb G_m^{\wedge n}):=\cc X\wedge_{\mathsf{Mod}\cc O^E_\Delta}\cc O^E_\Delta(-,\bb G_m^{\wedge n})$
is regarded as a presheaf of $S^1$-spectra.
Each structure map is induced by the adjunction unit morphism
   $$\cc X(\bb G_m^{\wedge n})\to\uhom_{\mathsf{Mod}\cc O^E_\Delta}(\cc O^E_\Delta(-,\bb G_m^{\wedge 1}),\cc X(\bb G_m^{\wedge n+1})).$$

\begin{cor}\label{spectralmorecor}
Let $k$ be a perfect field. There is
a triangulated equivalence of compactly generated triangulated categories
   $$\mathsf{SH}^{\nis}_{S^1}\cc O^E_{\Delta}\simeq\Mod_{SH(k)}^{\eff}E.$$
\end{cor}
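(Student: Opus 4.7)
The plan is to exhibit the equivalence via the evaluation functor $\Sigma^\infty_{\bb G_m}$ defined in the paragraph preceding the corollary. By Theorem~\ref{spectralmore} the source $\mathsf{SH}^{\nis}_{S^1}\cc O^E_{\Delta}$ is compactly generated by the symmetric $E$-framed motives $\{M_E^\Sigma(X)\}_{X\in Sm/k}$, while $\Mod_{SH(k)}^{\eff}E$ is by construction compactly generated by $\{E\wedge\Sigma^\infty_T X_+\}_{X\in Sm/k}$; so by~\cite[Lemma~4.8]{GJ} it is enough to identify $\Sigma^\infty_{\bb G_m}$ on these generators and to verify it is fully faithful between them.

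The $E$-module structure on the target comes from Theorem~\ref{tampamore}: every $\cc O^E_{\Delta}$-module is canonically a $\mathbf 1^E$-module, and $\mathbf 1^E$ maps to the ring object $E$ via Lemma~\ref{ringmap} composed with the ring structure of $E$, promoting each $\Sigma^\infty_{\bb G_m}(\cc X)$ to an $E$-module in $(S^1,\bb G_m^{\wedge 1})$-bispectra. Using Proposition~\ref{frmodulevar}(2) to replace the representable $\cc O^E_{\Delta}(-,X)$ by the symmetric $E$-framed motive $M_E^\Sigma(X)$, and Lemma~\ref{mfrsymmvar} to switch between symmetric and non-symmetric $E$-framed motives in positive levels, the bispectrum $\Sigma^\infty_{\bb G_m}(M_E^\Sigma(X))$ is identified level-wise with the Nisnevich-local equivalent of $(M_E(X),M_E(X\wedge\bb G_m^{\wedge 1}),M_E(X\wedge\bb G_m^{\wedge 2}),\ldots)$. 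The main $E$-framed motives theorem of~\cite{GN} then identifies this bispectrum inside $SH(k)$ with $E\wedge\Sigma^\infty_T X_+$, so $\Sigma^\infty_{\bb G_m}$ sends the chosen set of compact generators bijectively to the compact generators of $\Mod_{SH(k)}^{\eff}E$.

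For the full faithfulness on generators one has to check that
\[
 \mathsf{SH}^{\nis}_{S^1}\cc O^E_{\Delta}(M_E^\Sigma(X)[n],M_E^\Sigma(Y))\xrightarrow{\cong}\Mod_{SH(k)}^{\eff}E(E\wedge\Sigma^\infty_T X_+[n],E\wedge\Sigma^\infty_T Y_+)
\]
for all $X,Y\in Sm/k$ and $n\in\bb Z$. The left-hand side is a shifted stable homotopy group of the spectral mapping object $\cc O^E_{\Delta}(X,Y)$ after Nisnevich-local fibrant replacement and $\bb G_m$-stabilisation; by Proposition~\ref{frmodulevar}(2) and the definition of $\Sigma^\infty_{\bb G_m}$ it rewrites as a filtered colimit of homotopy groups of $C_*\Fr^E$-complexes indexed over $\bb G_m^{\wedge n}$-twists of $X$. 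The right-hand side is computed by exactly the same filtered colimit thanks to the framed motives package of~\cite{GP1,GN,GP12,GP14}. Granted this identification,~\cite[Lemma~4.8]{GJ} yields the asserted triangulated equivalence of compactly generated categories.

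The main obstacle is the mapping-space identification of the previous paragraph: one has to reconcile the ``internal'' bispectrum $\Sigma^\infty_{\bb G_m}\cc O^E_{\Delta}(-,X)$, obtained by tensoring representable spectral modules with $\cc O^E_{\Delta}(-,\bb G_m^{\wedge n})$, with the ``external'' $E\wedge\Sigma^\infty_T X_+$, and to track the compatibility between the $\Sigma_n$-symmetries on $E_n\wedge S^n$, the $\mathbf 1^E$-action provided by Theorem~\ref{tampamore} and the $\bb G_m$-stabilisation. All the required inputs are in place in Section~\ref{spectral} and in the cited framed motives literature, but the symmetric-group and $\bb G_m$-bookkeeping is the non-trivial part of the argument.
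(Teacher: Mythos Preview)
Your overall strategy is the paper's: use $L\Sigma^\infty_{\bb G_m}$, identify the image of the representables $\cc O^E_{\Delta}(-,X)$ (equivalently, of the $M_E^\Sigma(X)$ by Proposition~\ref{frmodulevar}(2)) with the bispectra $M_E^{\bb G}(X)_f\simeq X_+\wedge E$ via~\cite[Theorem~9.13]{GN} together with Lemma~\ref{mfrsymmvar}, and then conclude by~\cite[Lemma~4.8]{GJ}. The Hom-set comparison you spell out is exactly what the paper compresses into the single sentence ``takes compact generators to compact generators with isomorphic Hom-sets'', using that the $M_E^{\bb G}(X)_f$ are already motivically fibrant.

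There is one genuine misstep. Your claim that the $\mathbf 1^E$-action from Theorem~\ref{tampamore}, combined with Lemma~\ref{ringmap}, promotes $\Sigma^\infty_{\bb G_m}(\cc X)$ to an $E$-module in bispectra does not work. Lemma~\ref{ringmap} produces a ring map $\cc S\to\mathbf 1^E$, not one out of $\mathbf 1^E$, and in any case $\mathbf 1^E$ is a symmetric ring $S^1$-spectrum while $E$ is a symmetric ring $T$-spectrum; there is no ring morphism between them that would transport module structures, and the $\mathbf 1^E$-action lives entirely in the $S^1$-direction, so it cannot supply the $\bb G_m$- or $T$-direction multiplications an $E$-module requires. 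The paper avoids this issue altogether: it passes through the standard zigzag to the bispectrum model $\mathsf{SH}(k)=\Ho(Sp_{S^1,\bb G_m^{\wedge 1}}(k))$, treats $\wt{\Mod}_{SH(k)}^{\eff}E$ as the localising subcategory of $\mathsf{SH}(k)$ generated by the images of $\{X_+\wedge E\}$, and observes that $L\Sigma^\infty_{\bb G_m}$ lands there because it is triangulated, preserves coproducts, and carries each generator $\cc O^E_{\Delta}(-,X)$ to $M_E^{\bb G}(X)_f$. No point-set $E$-module structure on $\Sigma^\infty_{\bb G_m}(\cc X)$ is ever needed. Drop the $\mathbf 1^E$ detour and your argument becomes the paper's.
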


\begin{proof}
$SH(k)=\Ho(Sp_T^\Sigma(k))$ is naturally zigzag equivalent to the category of bispectra
$\mathsf{SH}(k)=\Ho(Sp_{S^1,\bb G}(k))$. Let $\wt{\Mod}_{SH(k)}^{\eff}E$ be the essential image of
$\Mod_{SH(k)}^{\eff}E$ under this zigzag equivalence.
The category $\wt{\Mod}_{SH(k)}^{\eff}E$
is compactly generated by the images of $\{X_+\wedge E\mid X\in\textrm{Sm}/k\}$
in $\wt{\Mod}_{SH(k)}^{\eff}E$. By the proof of~\cite[Theorem~9.13]{GN}
the latter are isomorphic in $\wt{\Mod}_{SH(k)}^{\eff}E$ to motivically fibrant bispectra
   $$M_E^{\mathbb G}(X)_f:=(M_E(X)_f,M_E(X_+\wedge\bb G_m^{\wedge 1})_f,M_E(X_+\wedge\bb G_m^{\wedge 2})_f,\ldots),$$
where ``$f$'' refers to level local
fibrant replacements of motivic $S^1$-spectra. We have a triangulated functor of compactly generated triangulated categories
   $$L\Sigma^\infty_{\bb G_m}:\mathsf{SH}^{\nis}_{S^1}\cc O^E_{\Delta}\to\wt{\Mod}_{SH(k)}^{\eff}E.$$
By Lemma~\ref{mfrsymmvar}, Proposition~\ref{frmodulevar} and Theorem~\ref{spectralmore} 
$M_E^{\mathbb G}(X)_f\cong L\Sigma^\infty_{\bb G_m}(\cc O^E_{\Delta}(-,X))$.
It follows that $L\Sigma^\infty_{\bb G_m}$ takes compact generators to compact generators with isomorphic Hom-sets.
It remains to apply Lemma~\ref{equ}.
\end{proof}

Next, we can stabilize our constructions in the $\bb G_m^{\wedge 1}$-direction as follows. Denote by 
$-\boxtimes\bb G_m^{\wedge 1}$ the endofunctor $\cc X\in\mathsf{Mod}\cc O^E_\Delta\mapsto\cc X(\bb G_m^{\wedge 1})$.
Following Hovey~\cite[Section~8]{H}, we consider the stable model structure on $\bb G_m^{\wedge 1}$-symmetric spectra 
$Sp^\Sigma(\mathsf{Mod}\cc O^E_\Delta,\bb G_m^{\wedge 1})$ (we start with the Nisnevich local stable model structure
on $\mathsf{Mod}\cc O^E_\Delta$). Its homotopy category is denoted by $\mathsf{SH}_{S^1,\bb G_m}\cc O^E_\Delta$.
Given $\cc X\in\mathsf{SH}^{\nis}_{S^1}\cc O^E_\Delta$ we write $\cc X(1)$ to denote $\cc X\boxtimes^L\bb G_m^{\wedge 1}$.

\begin{cor}\label{spectralmorecor1}
Let $k$ be a perfect field. There is
a triangulated equivalence of compactly generated triangulated categories
   $$\mathsf{SH}^{\nis}_{S^1,\bb G_m}\cc O^E_\Delta\simeq\Mod_{SH(k)}E,$$
where $\Mod_{SH(k)}E$ is the category of $E$-modules in $SH(k)$. Moreover,
the functor
   $$\Sigma^\infty_{\bb G_m}:\mathsf{SH}^{\nis}_{S^1}\cc O^E_\Delta
       \to\mathsf{SH}^{\nis}_{S^1,\bb G_m}\cc O^E_\Delta$$
is fully faithful. In particular, $\Hom_{\mathsf{SH}^{\nis}_{S^1}\cc O^E_\Delta}(\cc X,\cc X')\to
\Hom_{\mathsf{SH}^{\nis}_{S^1}\cc O^E_\Delta}(\cc X(1),\cc X'(1))$
is an isomorphism for all $\cc X,\cc X'\in\mathsf{SH}^{\nis}_{S^1}\cc O^E_\Delta$.
\end{cor}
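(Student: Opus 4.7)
The plan is to deduce the corollary from the effective equivalence of Corollary~\ref{spectralmorecor} by $\bb G_m^{\wedge 1}$-stabilizing both sides and tracking compact generators. On the motivic side, $\Mod_{SH(k)}E$ is naturally equivalent (via the evaluation on bispectra as in the proof of Corollary~\ref{spectralmorecor}) to the $\bb G_m^{\wedge 1}$-stabilization of $\Mod_{SH(k)}^{\eff}E$. On the spectral side, $\mathsf{SH}^{\nis}_{S^1,\bb G_m}\cc O^E_\Delta$ is by construction the homotopy category of the Hovey stable model structure on $Sp^\Sigma(\mathsf{Mod}\cc O^E_\Delta,\bb G_m^{\wedge 1})$, starting from the Nisnevich local model structure given by Theorem~\ref{spectralmore}.

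First I would promote the Quillen equivalence underlying Corollary~\ref{spectralmorecor} to a Quillen equivalence between the $\bb G_m^{\wedge 1}$-stable model categories via Hovey's stabilization machinery~\cite[Section~8]{H}, using that $L\Sigma^\infty_{\bb G_m}$ commutes with the endofunctor $-\boxtimes\bb G_m^{\wedge 1}$ up to the canonical isomorphism $\cc O^E_\Delta(-,X\times\bb G_m^{\wedge 1})\cong\cc O^E_\Delta(-,X)\wedge\cc O^E_\Delta(-,\bb G_m^{\wedge 1})$. The compact generators of $\mathsf{SH}^{\nis}_{S^1,\bb G_m}\cc O^E_\Delta$ are the $\bb G_m^{\wedge 1}$-suspension spectra $\Sigma^\infty_{\bb G_m}\cc O^E_\Delta(-,X)$ together with their $\bb G_m^{\wedge 1}$-desuspensions. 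By Lemma~\ref{mfrsymmvar} and Proposition~\ref{frmodulevar}, these map under $L\Sigma^\infty_{\bb G_m}$ to motivically fibrant bispectra $M_E^{\bb G}(X)_f$ (and desuspensions thereof), which form a set of compact generators of $\Mod_{SH(k)}E$ by the proof of~\cite[Theorem~9.13]{GN}. The Hom-set comparison on generators then reduces — level by level in the $\bb G_m^{\wedge 1}$-direction — to the effective case of Corollary~\ref{spectralmorecor}, and~\cite[Lemma~4.8]{GJ} yields the triangulated equivalence.

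For the second assertion, full faithfulness of $\Sigma^\infty_{\bb G_m}$ is equivalent to showing that $-\boxtimes\bb G_m^{\wedge 1}$ is fully faithful on $\mathsf{SH}^{\nis}_{S^1}\cc O^E_\Delta$. On the compact generators $\{\cc O^E_\Delta(-,X)\}_{X\in Sm/k}$ (equivalently, on the symmetric $E$-framed motives $\{M_E^\Sigma(X)\}$ by Proposition~\ref{frmodulevar}(2)), this is the $E$-framed cancellation theorem over a perfect field, which transports via Corollary~\ref{spectralmorecor} to the standard cancellation statement for $\bb G_m^{\wedge 1}$-smash on $\Mod_{SH(k)}^{\eff}E$. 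Once full faithfulness is established on compact generators, the isomorphism $\Hom(\cc X,\cc X')\xrightarrow{\sim}\Hom(\cc X(\bb G_m^{\wedge 1}),\cc X'(\bb G_m^{\wedge 1}))$ extends to all of $\mathsf{SH}^{\nis}_{S^1}\cc O^E_\Delta$ by a standard colimit argument in compactly generated triangulated categories.

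The main obstacle is the cancellation step: one must confirm that the Voevodsky-style cancellation for $E$-framed motives applies in the enriched monoidal set-up of Corollary~\ref{tampamorecor}, i.e.\ that the endofunctor $-\boxtimes\bb G_m^{\wedge 1}$ on $\mathsf{Mod}\cc O^E_\Delta$ coincides, after passage to homotopy categories, with the geometric $\bb G_m^{\wedge 1}$-smash on the motivic side under the equivalence of Corollary~\ref{spectralmorecor}. Once this identification is made, the triangulated equivalence in the $\bb G_m^{\wedge 1}$-stable regime and the fully faithful statement are formal consequences of stabilization together with~\cite[Lemma~4.8]{GJ}.
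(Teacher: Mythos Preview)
Your approach is correct and matches the paper's own proof, which is extremely terse: it simply says the argument is ``similar to that of Corollary~\ref{spectralmorecor}'' and that one ``compare[s] compact generators and Hom-sets between them in both categories,'' again invoking~\cite[Lemma~4.8]{GJ}. Your write-up is a faithful expansion of that sentence: you stabilise both sides in the $\bb G_m^{\wedge 1}$-direction, identify the compact generators as (desuspensions of) $\Sigma^\infty_{\bb G_m}\cc O^E_\Delta(-,X)\cong M_E^{\bb G}(X)_f$, and reduce the Hom-set comparison levelwise to the effective case.

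The one point where you are more explicit than the paper is the full faithfulness of $\Sigma^\infty_{\bb G_m}$. The paper does not spell this out; you correctly identify that it amounts to a cancellation statement and that, via the effective equivalence of Corollary~\ref{spectralmorecor}, it transports to the known cancellation for $\bb G_m^{\wedge 1}$-smash on $\Mod^{\eff}_{SH(k)}E$ (using that $E\in SH^{\eff}(k)$, noted just before Theorem~\ref{spectralmore}). Your ``main obstacle'' is therefore already resolved by that transport, and your argument is complete as written.
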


\begin{proof}
The proof is similar to that of Corollary~\ref{spectralmorecor}. We compare compact generators and Hom-sets between them
in both categories. 
\end{proof}

It is worth mentioning that we do not use any motivic equivalences or the $\bb A^1$-relation
in any of our definitions above (similarly to constructions of~\cite{GP5}). All constructions here are genuinely local.
On the other hand, we can do the usual Voevodsky approach~\cite{Voe1} to constructing the triangulated category 
of motives $DM^{\eff}(k)$. We  start with the spectral category $\cc O^E$ and Cech local model
structure on the stable model category of $\cc O^E$-modules $\mathsf{Mod}\cc O^E$.

For each finite Nisnevich cover $\{U_i\to X\}$ we let
$\cc O^E(-,\check{U}_*)$ be the realization of the simplicial module which in dimension $n$ is
$\vee_{i_0,\ldots,i_n}\cc O^E(-,U_{i_0\ldots i_n})$, with the obvious
face and degeneracy maps. Here $U_{i_0\ldots i_n}$ stands for the
smooth scheme $U_{i_0}\times_X\cdots\times_X U_{i_n}$. The reader
should not confuse $\cc O^E(-,\check{U}_*)$ with the realization  $\cc O^E(-,\check
C(U_*))$ of the simplicial module which in dimension $n$ is
$\cc O^E(-,\sqcup_{i_0,\ldots,i_n}U_{i_0\ldots i_n})$.  

\begin{lem}\label{cech}
Each natural map
$\vee_{i_0,\ldots,i_n}\cc O^E(-,U_{i_0\ldots i_n})\to\cc O^E(-,\check C(U_n))$ is a schemewise
stable equivalence of ordinary $S^1$-spectra.
\end{lem}

\begin{proof}
It is enough to show that the natural map 
   \begin{multline*}
    \beta:(\Fr_0^E(X,V),\Fr_1^E(X,V\otimes S^1),\ldots)\vee(\Fr_0^E(X,W),\Fr_1^E(X,W\otimes S^1),\ldots)\to\\
       \to(\Fr_0^E(X,V\sqcup W),\Fr_1^E(X,(V\sqcup W)\otimes S^1),\ldots)
   \end{multline*}
is a stable equivalence of ordinary $S^1$-spectra for any $X,V,W\in\textrm{Sm}/k$.
This is a stable equivalence if and only if $\Theta_{S^1}^\infty(\beta)$ is. The latter map is a stable equivalence if and only if
   \begin{multline*}
    \gamma:(\Fr^E(X,V),\Fr^E(X,V\otimes S^1),\ldots)\vee(\Fr^E(X,W),\Fr^E(X,W\otimes S^1),\ldots)\to\\
       \to(\Fr^E(X,V\sqcup W),\Fr^E(X,(V\sqcup W)\otimes S^1),\ldots)
   \end{multline*}
is a stable equivalence. This is a map of Segal $S^1$-spectra associated to Segal spaces of the form
$K\in\Gamma^{\op}\mapsto\Fr^E(X,V\otimes K)$, hence $\gamma$ is a map of connective spectra.
The Stable Whitehead Theorem~\cite[Proposition~II.6.30]{Sch} implies $\gamma$ is a stable equivalence
if and only if
   \begin{multline*}
    \bb Z(\gamma):(\bb Z\Fr^E(X,V),\bb Z\Fr^E(X,V\otimes S^1),\ldots)\vee(\bb Z\Fr^E(X,W),\bb Z\Fr^E(X,W\otimes S^1),\ldots)\to\\
       \to(\bb Z\Fr^E(X,V\sqcup W),\bb Z\Fr^E(X,(V\sqcup W)\otimes S^1),\ldots)
   \end{multline*}
is a stable equivalence, where $\bb Z\Fr^E(X,V)$ is the reduced free Abelian group of the pointed set $\Fr^E(X,V)$.
Repeating the proof of~\cite[Theorem~1.2]{GNP} word for word, $\bb Z(\gamma)$ is stably equivalent to the map
   \begin{multline*}
     \delta:(\bb ZF^E(X,V),\bb ZF^E(X,V\otimes S^1),\ldots)\vee(\bb ZF^E(X,W),\bb ZF^E(X,W\otimes S^1),\ldots)\to\\
       \to(\bb ZF^E(X,V\sqcup W),\bb ZF^E(X,(V\sqcup W)\otimes S^1),\ldots),
   \end{multline*}
where $\bb ZF^E(X,V)=\colim_n\bb ZF^E_n(X,V)$ with $\bb ZF_n^E(X,V)$ the free Abelian group freely generated by
$E$-framed correspondences with connected support~\cite{GN}. Since $\bb ZF^E(X,V\sqcup W)=\bb ZF^E(X,V)\times\bb ZF^E(X,W)$,
the map $\delta$ equals the stable equivalence of $S^1$-spectra
   \begin{multline*}
     (\bb ZF^E(X,V),\bb ZF^E(X,V\otimes S^1),\ldots)\vee(\bb ZF^E(X,W),\bb ZF^E(X,W\otimes  S^1),\ldots)\to\\
       \to(\bb ZF^E(X,V),\bb ZF^E(X,V\otimes S^1),\ldots)\times(\bb ZF^E(X,W),\bb ZF^E(X,W\otimes S^1),\ldots).
   \end{multline*}
This completes the proof of the lemma.
\end{proof}

The {\it Cech model category $\mathsf{Mod}\cc O^E_{Cech}$\/} associated with
Nisnevich topology is obtained from $\mathsf{Mod}\cc O^E$ by
Bousfield localization with respect to all maps
$\eta:\cc O^E(-,\check{U}_*)\to\cc O^E(-,X)$ running over the set of finite Nisnevich covers. 
It follows from~\cite[Corollary~5.10]{Voe4} (see
also~\cite{DHI}) that $\mathsf{Mod}\cc O^E_{Cech}$ coincides with the
Nisnevich local model category $\mathsf{Mod}\cc O^E_{\nis}$, with
stable weak equivalences defined stalkwise.

We say that a spectral category $\cc O$ is {\it Cech excisive\/} if for any finite
Nisnevich cover $\{U_i\to X\}$ the induced map $\eta:\cc O(-,\check{U}_*)\to\cc O(-,X)$ 
is a local stable weak equivalence.

\begin{thm}\label{spectralcech}
Let $k$ be any field. The commutative spectral category $\cc O^E$ is Cech excisive.
The Cech model structure coincides with 
Nisnevich local model structure on $\mathsf{Mod}\cc O^E$. This model structure 
has all the properties of Theorem~\ref{modelmot}.
The homotopy category $D\cc O^{E,\eff}(k)$ of $\mathsf{Mod}\cc O^E_{Cech}$ is closed symmetric monoidal 
compactly generated triangulated with compact
generators being the representables $\{\cc O^E(-,X)\mid X\in\mathrm{Sm}/k\}$.
The monoidal product $\cc O^E(-,X)\wedge \cc O^E(-,Y)$ in $D\cc O^{E,\eff}(k)$
is isomorphic to $\cc O^E(-,X\times Y)$.
\end{thm}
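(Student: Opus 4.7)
The plan is to reduce the theorem to one technical input, namely Cech excision for $\cc O^E$, after which everything follows formally from~\cite[Corollary~5.10]{Voe4},~\cite{DHI}, Day's theorem~\cite{Day}, Corollary~\ref{tampamorecor}, and Theorem~\ref{modelmot}.

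First I would establish Cech excision sectionwise. For $Y\in Sm/k$ the symmetric spectrum $\cc O^E(Y,\check U_*)$ is the realization of the simplicial symmetric spectrum $n\mapsto\bigvee_{i_0,\ldots,i_n}\cc O^E(Y,U_{i_0\ldots i_n})$, which at each level is stably equivalent to $\cc O^E(Y,\check C(U_n))$ by the observation recalled in the excerpt just before the theorem. To verify that $\eta_Y$ is a Nisnevich local stable equivalence, it suffices to check after further restriction to a Nisnevich stalk $\spec\mathcal O_{Z,z}^h$. Over such a Henselian local ring the cover $\{U_i\to X\}$ admits a section through some $U_{i_0}$, which induces an extra degeneracy on the Cech simplicial scheme $\check U_*$ over the stalk. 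Because $\cc O^E(Y,U)=\Hom_{\cc M}(Y_+\wedge\bb P^{\wedge\star},U_+\wedge E_\star\wedge S^\star)$ is functorial in $U$ through internal hom in $\cc M$ with the fixed pointed motivic spaces $E_n\wedge S^n$, this extra degeneracy descends to an extra degeneracy on the simplicial symmetric spectrum. Its realization is then simplicially contractible onto $\cc O^E(Y,X)$ over the stalk, giving the required stable equivalence.

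Given Cech excision, the identity $\mathsf{Mod}\cc O^E_{Cech}=\mathsf{Mod}\cc O^E_{\nis}$ is the statement recalled before the theorem from~\cite[Corollary~5.10]{Voe4} and~\cite{DHI}. Applying Cech excision to the two-element Nisnevich cover $\{U\sqcup X'\to X\}$ associated with an elementary distinguished square $Q$ shows that $\cc O Q$ is a homotopy pushout, so $\cc O^E$ is Nisnevich excisive in the sense of Definition~\ref{1214}. Theorem~\ref{modelmot} then transfers cellularity, properness, spectral enrichment and weak finite generation to $\mathsf{Mod}\cc O^E_{Cech}$. The closed symmetric monoidal structure is provided by Corollary~\ref{tampamorecor} via Day's theorem, and compatibility with the model structure is again Theorem~\ref{modelmot}. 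Compact generation by $\{\cc O^E(-,X)\mid X\in Sm/k\}$ follows from the enriched Yoneda lemma together with the fact that representables are compact in $\mathsf{Mod}\cc O^E$. The isomorphism $\cc O^E(-,X)\wedge\cc O^E(-,Y)\cong\cc O^E(-,X\times Y)$ already holds before deriving by~\cite[Corollary~2.7]{DRO}, and since representables are cofibrant it descends to the derived smash product in $D\cc O^{E,\eff}(k)$.

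The main obstacle will be executing the extra-degeneracy argument at the stalk cleanly. The mapping spectra $\cc O^E(Y,U)$ couple the source variable $Y$ with the internal hom structure in $\cc M$, and although the target variable $U$ enters only through the fixed spaces $U_+\wedge E_n\wedge S^n$, one must confirm that the contraction from the section $\spec\mathcal O_{Z,z}^h\to U_{i_0}$ commutes with realization at the symmetric-spectrum level and lifts past the comparison between Cech-nerve bouquets and disjoint-union Cech nerves. Once this point is settled, the rest of the theorem is formal bookkeeping carried over from~\cite{GP12,GP14}.
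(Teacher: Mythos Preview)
Your overall strategy---establish Cech excision for $\cc O^E$ and then deduce everything else formally---matches the paper's approach, and your final paragraph correctly identifies that once Cech excision is in hand the remaining claims are bookkeeping via~\cite{Voe4,DHI,DRO,Day} and Theorem~\ref{modelmot}. However, the argument you give for Cech excision itself contains a genuine confusion of variables that cannot be repaired as written.

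The map $\eta:\cc O^E(-,\check U_*)\to\cc O^E(-,X)$ is a map of presheaves in the \emph{source} variable: for $Y\in Sm/k$ one has $\cc O^E(Y,X)_n=\Hom_{\cc M}(Y_+\wedge\bb P^{\wedge n},X_+\wedge E_n\wedge S^n)$, and ``local stable weak equivalence'' means Nisnevich-local in $Y$. You propose to pass to a Henselian stalk $\spec\cc O^h_{Z,z}$ and then assert that the cover $\{U_i\to X\}$ admits a section over it. But the cover is a cover of $X$, not of $Y$; replacing $Y$ by a Henselian local scheme gives no splitting of $\{U_i\to X\}$ whatsoever, so there is no extra degeneracy to produce. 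The ``section $\spec\cc O^h_{Z,z}\to U_{i_0}$'' you invoke simply does not exist.

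The paper instead reduces to $\cc O^E(-,\check C(U_*))\to\cc O^E(-,X)$ and appeals to the proof of~\cite[Theorem~4.4]{Voe2}, which uses the explicit geometric description of framed correspondences (Voevodsky's Lemma). A level-$n$ $E$-framed correspondence from $Y$ to $X$ carries a support $Z\subset\bb A^n_Y$ finite over $Y$ together with a map $g$ from an \'etale neighbourhood of $Z$ to $X$. Nisnevich-locally on $Y$ the finite $Y$-scheme $Z$ becomes a disjoint union of Henselian local schemes; the Nisnevich cover $\{U_i\to X\}$ then splits after pullback along $g$ restricted to $Z$, and one lifts $g$ (on a smaller \'etale neighbourhood) to the cover. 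It is this mechanism---finiteness of the support transferring Henselian locality from the source $Y$ to the support $Z$ and thence into the target $X$---that makes the comparison a level local equivalence. Your extra-degeneracy argument does not touch this structure; without it the Cech excision step is unproven.
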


\begin{proof}
By Lemma~\ref{cech} each map
$\vee_{i_0,\ldots,i_n}\cc O^E(-,U_{i_0\ldots i_n})\to\cc O^E(-,\check C(U_n))$ is a schemewise
stable equivalence, and hence the realization is. The proof of~\cite[Theorem~4.4]{Voe2} shows that the 
map $\cc O^E(-,\check C(U_*))\to\cc O^E(-,X)$ is a level local equivalence. We see that $\eta$
is a local stable weak equivalence. The rest is now straightforward.
\end{proof}

The homotopy category $D\cc O^{E,\eff}(k)$ plays the same role as the derived category
$D(\textrm{Shv}_{tr}^{\nis}(\textrm{Sm}/k))$ of cochain complexes of Nisnevich sheaves with transfers.
Recall from~\cite{Voe1} that Voevodsky's category of motives $DM^{\eff}(k)$ is the localisation of $D(\textrm{Shv}_{tr}^{\nis}(\textrm{Sm}/k))$
with respect to the family $\{\bb Z_{tr}(-,X\times\bb A^1)\to\bb Z_{tr}(-,X)\mid X\in\textrm{Sm}/k\}$. If $k$ is perfect,
$DM^{\eff}(k)$ is equivalent to the full subcategory of $D(\textrm{Shv}_{tr}^{\nis}(\textrm{Sm}/k))$ consisting of
chain complexes with homotopy invariant cohomology sheaves~\cite{Voe1}.
Likewise, localize $\mathsf{Mod}\cc O^E_{Cech}$ with respect to the maps 
$\{\cc O^E(-,X\times\bb A^1)\to\cc O^E(-,X)\mid X\in\textrm{Sm}/k\}$.
Denote by $D\cc O^{E,\eff}_{\mot}(k)$ its homotopy category.

\begin{thm}\label{spectralmorethm}
Let $k$ be a perfect field. The homotopy category $D\cc O^{E,\eff}_{\mot}(k)$ is equivalent to the full triangulated subcategory $DE^{\eff}(k)$
of $D\cc O^{E,\eff}(k)$ consisting of modules with homotopy invariant sheaves of stable homotopy groups.
The inclusion $DE^{\eff}(k)\to D\cc O^{E,\eff}(k)$ has a right adjoint
$C_*$ taking a module $M\in D\cc O^{E,\eff}(k)$ to its Suslin complex $C_*(M)$.

Moreover, there is
a triangulated equivalence of compactly generated triangulated categories
   $$DE^{\eff}(k)\simeq\mathsf{SH}^{\nis}_{S^1}\cc O^E_\Delta.$$
\end{thm}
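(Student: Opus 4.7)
The plan is to prove the three assertions of Theorem~\ref{spectralmorethm} in sequence: (i) the identification $D\cc O^{E,\eff}_{\mot}(k)\simeq DE^{\eff}(k)$ of the $\bb A^1$-localization with the subcategory of $\bb A^1$-local modules; (ii) the exhibition of the Suslin complex $C_*$ as right adjoint to the inclusion; and (iii) the equivalence $DE^{\eff}(k)\simeq\mathsf{SH}^{\nis}_{S^1}\cc O^E_\Delta$.

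For (i) and (ii), the essential input is the strict $\bb A^1$-invariance theorem for $E$-framed motives established in~\cite{GN}. First I would verify that for every $M\in\mathsf{Mod}\cc O^E$ the Suslin complex $C_*(M)$ has homotopy invariant Nisnevich sheaves of stable homotopy groups, placing $C_*(M)$ in $DE^{\eff}(k)$. By writing $M$ as a homotopy colimit of representables $\cc O^E(-,X)$ and using that $C_*$ commutes with such colimits componentwise, it suffices to verify this on the representables, where it follows from the framed strict homotopy invariance theorem of~\cite{GN} combined with Voevodsky's Lemma (which provides the geometric description of $\Fr^E_n$ used throughout). The natural map $M\to C_*(M)$ is an $\bb A^1$-weak equivalence by the standard cosimplicial contraction argument applied to $\Delta^\bullet_k$. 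Hence $C_*$ realizes the Bousfield localization $D\cc O^{E,\eff}(k)\to D\cc O^{E,\eff}_{\mot}(k)$, identifies the latter with the reflective subcategory $DE^{\eff}(k)$, and exhibits the claimed adjoint.

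For (iii), I would compare compact generators and Hom-sets via~\cite[Lemma~4.8]{GJ}. By Theorem~\ref{spectralmore} the category $\mathsf{SH}^{\nis}_{S^1}\cc O^E_\Delta$ is compactly generated by the symmetric $E$-framed motives $\{M_E^\Sigma(X)\}_{X\in Sm/k}$, which, by Proposition~\ref{frmodulevar}(2), are $\pi_*$-isomorphic to the representables $\cc O^E_\Delta(-,X)$; meanwhile $DE^{\eff}(k)$ is compactly generated by $C_*(\cc O^E(-,X))$. Under the natural extension-of-scalars functor along $\cc O^E\to\cc O^E_\Delta$ these two families get identified, since by construction $\cc O^E_\Delta(-,X)=C_*(\cc O^E(-,X))$ after passage to the underlying $\cc O^E$-module structure. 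It then remains to match Hom-groups between corresponding generators; this reduces, using Nisnevich excisiveness of both $\cc O^E_\Delta$ (Theorem~\ref{spectralmore}) and $\cc O^E$ (Theorem~\ref{spectralcech}), to a computation in $D\cc O^{E,\eff}(k)$ between $\bb A^1$-local objects, where the localization-adjunction of (ii) gives what we need.

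The main obstacle is step (i): the strict $\bb A^1$-invariance of $C_*(\cc O^E(-,X))$ in this enriched spectral setting. This is essentially the framed-motives strict invariance theorem of~\cite{GN}, whose proof depends on the hypotheses that $E$ is a Thom ring $T$-spectrum with bounding constant $d\leq 1$ and contractible alternating-group action, together with perfectness of $k$. Once that input is granted, the remainder of the argument is a formal manipulation with reflective subcategories, Bousfield localization, and Brown representability for compactly generated triangulated categories.
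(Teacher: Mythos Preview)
Your approach matches the paper's: part (i)--(ii) via a reflective localization argument in the style of~\cite[Theorem~3.5]{GP14}, and part (iii) by comparing compact generators and Hom-sets using~\cite[Lemma~4.8]{GJ}. The only substantive discrepancy is the source you cite for the strict $\bb A^1$-invariance input: the paper invokes~\cite{GP4} (complemented by~\cite{DP} in characteristic~2 and by~\cite[A.27]{DKO} for finite fields) for the statement that any $\bb A^1$-invariant quasi-stable radditive framed presheaf of Abelian groups has strictly $\bb A^1$-invariant Nisnevich sheafification, whereas you attribute this to~\cite{GN}. Also, your reduction of (i) to representables via homotopy colimits is unnecessary: the point is that the homotopy presheaves of $C_*(M)$ for \emph{any} $M\in\mathsf{Mod}\cc O^E$ are automatically $\bb A^1$-invariant framed presheaves, so the cited strict-invariance theorem applies directly.
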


\begin{proof}
The proof of the first part is like that of~\cite[Theorem~3.5]{GP14}. One also uses here the fact 
that if $k$ is perfect then by~\cite{GP4} (complemented by~\cite{DP} in
characteristic 2 and by~\cite[A.27]{DKO} for finite fields) any
$\bb A^1$-invariant quasi-stable radditive framed presheaf of Abelian
groups $\mathcal F$, the associated Nisnevich sheaf $\mathcal
F_{\nis}$ is strictly $\bb A^1$-invariant.

The equivalence $DE^{\eff}(k)\simeq\mathsf{SH}^{\nis}_{S^1}\cc O^E_\Delta$ follows from the fact that
both categories are compactly generated by symmetric $E$-framed motives with the same Hom-sets
(as usual we use Lemma~\ref{equ} here).
\end{proof}

We call the category $DE^{\eff}(k)$ from the preceding theorem the {\it triangulated category of $E$-framed motives}.
We finish the paper with the following result saying that $DE^{\eff}(k)$ recovers the category of effective $E$-modules in $SH(k)$.

\begin{cor}\label{sledstvie}
Let $k$ be a perfect field. There is a triangulated equivalence of compactly generated triangulated categories
   $$DE^{\eff}(k)\simeq\Mod^{\eff}_{SH(k)}E.$$
\end{cor}

\begin{proof}
This follows from the preceding theorem and Corollary~\ref{spectralmorecor}.
\end{proof}

\end{document}